\theoremstyle{plain}
\newtheorem{mainthm}{Theorem}
\newtheorem{propIntro}[mainthm]{Proposition}
\newtheorem{corIntro}[mainthm]{Corollary}
\newtheorem{conjIntro}[mainthm]{Conjecture}
\newtheorem{thm}{Theorem}[section]
\newtheorem{prop}[thm]{Proposition}
\newtheorem{cor}[thm]{Corollary}
\newtheorem{lemma}[thm]{Lemma}
\newtheorem{que}[thm]{Question}
\theoremstyle{definition}
\newtheorem{definition}[thm]{Definition}
\newtheorem{remark}[thm]{Remark}
\newcommand{\NN}{\mathbb{N}}
\newcommand{\ZZ}{\mathbb{Z}}
\newcommand{\RR}{\mathbb{R}}
\newcommand{\kk}{\Bbbk}
\newcommand{\Gr}{\mathrm{Gr}}
\newcommand{\OGr}{\mathrm{OGr}}
\newcommand{\Mat}{\mathrm{Mat}}
\newcommand{\Spec}{\mathrm{Spec}}
\newcommand{\GL}{\mathrm{GL}}
\newcommand{\Ort}{\mathrm{O}}
\newcommand{\SO}{\mathrm{SO}}
\newcommand{\T}{\mathrm{T}}
\newcommand{\LSS}{\mathrm{LSS}}
\newcommand{\X}{\mathsf{X}}
\newcommand{\V}{\mathsf{V}}
\newcommand{\I}{\mathsf{I}}
\newcommand{\R}{\mathsf{R}}
\newcommand{\col}{\mathrm{col}}
\newcommand{\rk}{\mathrm{rk}}
\newcommand{\Span}{\mathrm{Span}}
\newcommand{\codim}{\mathrm{codim}}
\newcommand{\rka}{\mathrm{rk}_\mathrm{ani}}
\newcommand{\rki}{\mathrm{rk}_\mathrm{iso}}
\newcommand{\coa}{\mathrm{col}_\mathrm{ani}}
\newcommand{\coi}{\mathrm{col}_\mathrm{iso}}
\newcommand{\iso}{\mathrm{iso}}
\newcommand{\ani}{\mathrm{ani}}
\newcommand{\mcU}{\mathcal{U}}
\newcommand{\mcW}{\mathcal{W}}
\newcommand{\mcX}{\mathcal{X}}
\newcommand{\mcY}{\mathcal{Y}}
\newcommand{\mcZ}{\mathcal{Z}}
\newcommand{\mcG}{\mathcal{G}}
\newcommand{\mcL}{\mathcal{L}}
\newcommand{\mcS}{\mathcal{S}}
\newcommand{\ba}{\mathbf{a}}
\newcommand{\bb}{\mathbf{b}}
\newcommand{\bx}{\mathbf{x}}
\newcommand{\bz}{\mathbf{0}}
\newcommand{\bv}{\mathbf{v}}
\newcommand{\bw}{\mathbf{w}}
\newcommand{\ee}{\varepsilon_\mathrm{even}}
\newcommand{\eo}{\varepsilon_\mathrm{odd}}
\newcommand{\dev}{\delta_\mathrm{even}}
\newcommand{\dod}{\delta_\mathrm{odd}}
\newcommand{\gev}{\gamma_\mathrm{even}}
\newcommand{\god}{\gamma_\mathrm{odd}}
\renewcommand{\matrix}[1] {\begin{pmatrix}#1\end{pmatrix}}
\newcommand{\PreserveBackslash}[1]{\let\temp=\\#1\let\\=\temp}
\newcolumntype{C}[1]{>{\PreserveBackslash\centering}p{#1}}
\newcolumntype{R}[1]{>{\PreserveBackslash\raggedleft}p{#1}}
\newcolumntype{L}[1]{>{\PreserveBackslash\raggedright}p{#1}}
\begin{document}

\author[L.\,Casabella, A.\,Sammartano]{Laura~Casabella and Alessio~Sammartano}
\address{Laura Casabella: Goethe-Universität Frankfurt \\ Frankfurt \\ Germany}
\email{laura.casabella@mis.mpg.de}
\address{Alessio Sammartano: Dipartimento di Matematica \\ Politecnico di Milano \\ Milan \\ Italy}
\email{alessio.sammartano@polimi.it}

\subjclass[2020]{Primary: 13F70; Secondary: 05C62, 05E40, 13A50, 13B22, 13C40, 13F15, 13H10, 14C05, 14L30, 14M17, 15A63}

\title{The variety of orthogonal frames}

\begin{abstract}
An orthogonal $n$-frame is an ordered set of $n$ pairwise orthogonal vectors.
The set of all orthogonal $n$-frames 
in a $d$-dimensional quadratic vector space
is  an algebraic variety $\V(d,n)$.
In this paper, we investigate the variety $\V(d,n)$ 
as well as the quadratic ideal $\I(d,n)$ generated by the orthogonality  relations, which cuts out $\V(d,n)$.
We  classify the irreducible components of $\V(d,n)$,
give criteria for the ideal $\I(d,n)$ to be prime or a complete intersection, and 
for the variety $\V(d,n)$ to be normal.
  We also  give near-equivalent conditions for  $\V(d,n)$ to be  factorial.
Applications are given to the theory of Lov\'asz-Saks-Schrijver ideals.
\end{abstract}

\maketitle

\section{Introduction}\label{SectionIntroduction}

Let $\kk$ be a field of characteristic not 2,
and  $E$ be a quadratic vector space over $\kk$ of dimension $d$,
that is, a vector space  equipped with a non-degenerate symmetric bilinear form
$\langle -, -\rangle$.
An {\bf orthogonal $n$-frame} 
is an  $n$-tuple $(\bv_1, \ldots, \bv_n)\in E^{\oplus n}$ 
such that  
$\bv_1, \ldots, \bv_n \in E$ are pairwise orthogonal.
The set $\V(d,n) \subseteq E^{\oplus n}$ of all orthogonal $n$-frames is an algebraic variety, called the {\bf variety of orthogonal frames}.

By choosing  coordinates, we may identify $E$ with the space $E = \kk^d$ of column vectors, 
and $E^{\oplus n}$ with the space 
 $E^{\oplus n}=\Mat(d,n)$  of $d \times n$ matrices.
Let $ S = \kk\big[\Mat(d,n)\big] = \kk\big[x_{i,j}\, :\,  1 \leq i \leq d, \, 1 \leq j \leq n\big]$
be the polynomial ring.
The variety of orthogonal frames is defined, set-theoretically, by the ideal 
$$\I(d,n) = \left( \langle \bx_j, \bx_k\rangle  \, : \, 1 \leq j < k \leq n\right) \subseteq S $$ 
where $\bx_j$ denote the columns of the generic $d\times n$ matrix $(x_{i,j})$.
We also consider the ring $\R(d,n) = S/\I(d,n)$ and  the affine scheme  $\X(d,n) = \Spec\, \R(d,n)$, whose reduced scheme is $\V(d,n)$.
The goal of this paper is to understand the geometry and algebra of the variety $\V(d,n)$, as well as the ideal $\I(d,n)$ and affine scheme $\X(d,n)$.

It is important to distinguish $\V(d,n)$ from the analogous variety obtained by additionally imposing the normalization condition that $\langle \bv_j, \bv_j \rangle = 1$ for all $j$.
The latter is the variety of \emph{orthonormal} $n$-frames, also known as the \emph{Stiefel manifold}.
Despite the very similar definition, the two varieties exhibit starkly different behavior.
The Stiefel manifold is well-understood:
it is smooth, in fact,  it is a  homogeneous space;
if $n<d$, it is an irreducible variety, 
whereas if $n = d$ it is simply the orthogonal group $\mathsf{O}(n)$;
 the ${n+1 \choose 2}$    polynomials encoding the 
orthonormality relations 
form a complete intersection, and they generate  all the polynomials vanishing on the variety. 
Long important in algebraic topology and Riemannian geometry, the Stiefel manifold has in recent years also emerged as a valuable tool in  applications in optimization theory and data science, see e.g. \cite{CV,JD,LLPSW,LLT}.

In contrast, the variety of orthogonal frames $\V(d,n)$
displays considerably greater complexity,
and the primary goal of this paper is to initiate its exploration.
It is a singular variety, and it can have many components.
Moreover, it is an open problem whether $\V(d,n)$ coincides with the  affine scheme $\X(d,n)$ defined by the  orthogonality relations, 
that is, whether the ideal $\I(d,n)$ is radical.

Of particular interest, when $n\leq d$, is the \emph{principal component} of $\V(d,n)$, which is the irreducible component that generically parameterizes linearly independent orthogonal frames.
This variety too is poorly understood:
its prime ideal and its singularities are known only for special values of $d,n$.
For a striking example, we do not know all the polynomials vanishing on the set of  orthogonal bases of $\RR^6$.

We now describe our results in detail.
Since $\V(d,1) = \kk^d$, we assume $n \geq 2$ from now on.
Define the following numerical functions:

\begin{align*}
D_{\mathrm{prime}}(n) & :=\min \left(2 \left \lfloor \frac{2n+1 - \sqrt{8n+1}}{2} \right\rfloor +2
\,,\, \,
2 \left \lfloor \frac{2n - \sqrt{8n-7}+1}{2} \right\rfloor +1\right),
\\
\\
D_{\mathrm{CI}}(n) & := 
\min \left(2 \left \lceil \frac{2n+1 - \sqrt{8n+1}}{2} \right\rceil
\,,\, \,
2 \left \lceil \frac{2n - \sqrt{8n-7}-1}{2} \right\rceil +1\right),
\\
\\
D_{\mathrm{UFD}}(n)  &:=
\begin{cases}
\min \left(
2 \left\lceil \frac{ 2n+1 - \sqrt{8n-23}}{2}\right\rceil
\,,\, \,
2 \left\lceil \frac{ 2n -1- \sqrt{8n-31}}{2}\right\rceil+1
\right) & \quad \text{if } n \geq 4,\\
4=D_{\mathrm{prime}}(3) & \quad\text{if } n =3,\\
3=D_{\mathrm{prime}}(2)+1 & \quad\text{if } n =2.
\end{cases}
\end{align*}

Our  main theorems are the following:

\begin{mainthm}\label{ThmCI}
The ideal $\I(d,n)$ is a complete intersection if and only if $d \geq D_{\mathrm{CI}}(n)$.
In this case, the scheme $\X(d,n)$ is reduced.
\end{mainthm}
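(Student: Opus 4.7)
The plan is to reduce the statement to a dimension count, then to a quadratic inequality in $d$, and finally to a Jacobian check for reducedness.

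Step 1 (reduction to dimensions): Since $\I(d,n)$ is generated by $\binom{n}{2}$ quadrics, Krull's height theorem gives $\codim \I(d,n) \leq \binom{n}{2}$ with equality iff $\I(d,n)$ is a complete intersection. Equivalently, the CI condition is $\dim \V(d,n) \leq nd - \binom{n}{2}$. The reverse inequality is automatic: when $n \leq d$, from the principal component, and in general, because each minimal prime has codimension at most $\binom{n}{2}$, so each component has dimension at least $nd - \binom{n}{2}$. Hence $\I(d,n)$ is CI iff every irreducible component of $\V(d,n)$ has dimension exactly $nd - \binom{n}{2}$.

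Step 2 (the combinatorial optimization): I would invoke the classification of irreducible components of $\V(d,n)$ established earlier in the paper. Besides the principal component (of dimension $nd - \binom{n}{2}$ when $n \leq d$), every component is a ``partially isotropic'' family in which a subset $T \subseteq \{1, \ldots, n\}$ of the vectors is constrained to lie in a common isotropic subspace of dimension $m \leq \lfloor d/2\rfloor$, while the remaining vectors vary in the orthogonal complement subject to the surviving orthogonality relations. Parameterizing as a bundle over $\OGr(m, d)$ gives a dimension formula polynomial in $d, n, |T|, m$. Maximizing over $|T|$ and $m$ and imposing that the maximum be $\leq nd - \binom{n}{2}$ produces a quadratic inequality in $d$, whose solution has the form $d \geq 2n + O(1) - \sqrt{8n + O(1)}$. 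The two expressions inside the minimum defining $D_{\mathrm{CI}}(n)$ come from a case split on the parity of $d$: since the maximal isotropic subspace has dimension $\lfloor d/2 \rfloor$, the optimization is subtly different for even and odd $d$, and rounding via ceilings and then taking the smaller of the best even and the best odd threshold produces $D_{\mathrm{CI}}(n)$.

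Step 3 (reducedness): When $\I(d,n)$ is a CI, $\R(d,n)$ is Cohen--Macaulay, so reducedness is equivalent to generic reducedness. It suffices to exhibit, on each irreducible component of $\V(d,n)$, a point at which the Jacobian of the $\binom{n}{2}$ generators $\langle \bx_j, \bx_k\rangle$ has full rank $\binom{n}{2}$: the local ring there is regular, hence reduced, and localizing further to the generic point of the containing component preserves reducedness. A generic orthogonal frame handles the principal component; on each partially isotropic component one writes down an explicit tuple---placing the vectors indexed by $T$ in general position inside a chosen isotropic subspace and the remaining vectors in a hyperbolic complement---and verifies that the differentials of the orthogonality relations remain linearly independent.

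The main obstacle I expect is Step 2: tracking the admissible ranges of $|T|$ and $m$, identifying the non-principal component of maximal dimension, and resolving the parity case distinction that produces the minimum of two expressions in $D_{\mathrm{CI}}(n)$, including the delicate rounding that converts the real threshold into the stated integer bound. A subordinate difficulty lies in Step 3, in engineering the explicit smooth points on every partially isotropic component so that no accidental linear dependence among the Jacobian rows arises once the isotropic constraints are imposed.
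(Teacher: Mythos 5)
Your overall architecture coincides with the paper's: reduce the complete intersection property to ``codimension $=\binom{n}{2}$'' via Krull's height theorem; compute $\dim \V(d,n)$ by maximizing the dimensions of the strata indexed by anisotropic/isotropic data, each realized as a bundle over an orthogonal Grassmannian (this is exactly Theorem \ref{ThmStructureLpq} together with Proposition \ref{PropMaximum}); convert the parity-dependent real thresholds into the integer bound $D_{\mathrm{CI}}(n)$ (Lemma \ref{LemmaArithmetic}); and deduce reducedness from Cohen--Macaulayness ($S_1$) plus generic reducedness ($R_0$) by Serre's criterion. Steps 1 and 2 are therefore sound, granting the stratification and dimension results you invoke from earlier in the paper.

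The genuine gap is in Step 3. You assert that placing the isotropic vectors ``in general position inside a chosen isotropic subspace'' produces a point at which the differentials of the $\binom{n}{2}$ orthogonality relations are independent, and you treat the verification as a subordinate engineering matter. In fact this is the crux of the reducedness statement, and it is not a routine check. The case that matters is the boundary case $d=D_{\mathrm{CI}}(n)$, where the purely isotropic stratum (with $d=2q$, say $d$ even) is an irreducible component of the same codimension $\binom{n}{2}$; at a point of the form $\bigl(\begin{smallmatrix} B \\ \nu B \end{smallmatrix}\bigr)$ with $B\in\Mat(q,n)$ general, the rank of the Jacobian equals $\dim_\kk [L]_2$, where $L$ is the ideal generated by the $q$ general linear forms given by the rows of $B$ inside $\kk[Z_1,\dots,Z_n]/(Z_1^2,\dots,Z_n^2)$. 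Showing that this dimension reaches $\min\bigl(\binom{n}{2},\, nq-\binom{q}{2}\bigr)$ is a statement about Hilbert functions of general intersections of ideals; the paper proves it (Proposition \ref{PropSmoothPointP0Q2D}) by reducing to the fact that the squares of $n$ general linear forms in $n-q$ variables are either linearly independent or span the entire degree-two component, which in turn rests on the general-position property of the quadratic Veronese embedding. Genericity of the configuration alone does not make the independence self-evident, and no purely explicit matrix check is offered in the paper precisely because this is where the real work lies (the whole of Section \ref{SectionSmoothness} is devoted to it, with the mixed strata handled by the reduction of Proposition \ref{PropReductionToP0}). As written, your ``one verifies that the differentials remain linearly independent'' is an unsupported claim, so the reducedness half of the theorem is not yet proved.
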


\begin{mainthm}\label{ThmPrime}
The ideal $\I(d,n)$ is prime if and only if $d \geq D_{\mathrm{prime}}(n)$.
In this case, 
 $\V(d,n)$ is a normal variety.
\end{mainthm}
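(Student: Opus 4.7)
The plan is to deduce Theorem \ref{ThmPrime} from Theorem \ref{ThmCI} via Serre's normality criterion, reducing the problem to an analysis of the singular locus of $\X(d,n)$.

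A direct comparison of the closed forms shows $D_{\mathrm{prime}}(n) \geq D_{\mathrm{CI}}(n)$, so in the range $d \geq D_{\mathrm{prime}}(n)$ Theorem \ref{ThmCI} already yields that $\I(d,n)$ is a complete intersection; in particular $\R(d,n)$ is Cohen--Macaulay and satisfies Serre's $S_2$. Since $\I(d,n)$ is generated by degree-$2$ elements, $\R(d,n)$ is a connected graded $\kk$-algebra and $\X(d,n)$ is automatically connected. Once I verify Serre's $R_1$, the conclusion is that $\R(d,n)$ is a normal domain, giving both primeness and normality of $\V(d,n)$ in one stroke.

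The technical heart is the $R_1$ verification. The Jacobian of the generators $\langle \bx_j,\bx_k\rangle$ has a transparent structure: $\partial \langle \bx_j,\bx_k\rangle/\partial x_{i,\ell}$ equals $x_{i,k}$ when $\ell=j$, $x_{i,j}$ when $\ell=k$, and $0$ otherwise. I would stratify $\X(d,n)$ by the degeneracy type of the frame $(\bv_1,\ldots,\bv_n)$ at a point --- recording which subsets have small linear span and which lie in a common totally isotropic subspace --- and show that on each stratum the corank of the Jacobian is controlled by the resulting linear-algebraic invariants. The codimension of each degenerate stratum combines binomial quantities of the form $\binom{m+1}{2}$ against linear terms in $d$, and $D_{\mathrm{prime}}(n)$ is tuned so that every such stratum has codimension $\geq 2$ in $\X(d,n)$ precisely when $d \geq D_{\mathrm{prime}}(n)$. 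Matching this combinatorial threshold to the closed-form expressions involving $\sqrt{8n+1}$ and $\sqrt{8n-7}$ is the main obstacle; one expects an optimization over the size $m$ of an isotropic sub-frame, with the two summands in the $\min$ corresponding to even and odd optimizers.

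For the converse direction, when $d < D_{\mathrm{prime}}(n)$, I would produce an irreducible component of $\V(d,n)$ distinct from the principal one, using the classification of components advertised in the abstract. A natural candidate is the closure of the locus where a maximal subset of the columns lies in a common totally isotropic subspace; at the critical value of $d$ this secondary component has dimension $nd-\binom{n}{2}$, matching that of the principal component, so $\V(d,n)$ is forced to be reducible and $\I(d,n)$ fails to be prime.
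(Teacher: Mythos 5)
Your skeleton coincides with the paper's: reduce to Serre's criterion ($S_2$ from the complete intersection range $D_{\mathrm{prime}}(n)\geq D_{\mathrm{CI}}(n)$, plus an $R_1$ verification via the Jacobian and a stratification by isotropic/anisotropic degeneracy), and for the converse use the maximal isotropic locus. But the decisive step is asserted on the basis of a claim that is false. You say $D_{\mathrm{prime}}(n)$ is tuned so that every degenerate stratum has codimension $\geq 2$ in $\X(d,n)$ precisely when $d\geq D_{\mathrm{prime}}(n)$; this fails already for $(d,n)=(4,4)$, where $d=D_{\mathrm{prime}}(4)=4$ and yet the strata $\mcS_{3,0}$ and $\mcS_{2,1}$ have codimension $1$ (the stratum dimension function gives $\sigma(4,0)=10=nd-{n\choose 2}$ while $\sigma(3,0)=\sigma(2,1)=9$). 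The threshold $D_{\mathrm{prime}}(n)$ governs irreducibility, i.e.\ that the purely isotropic strata stay strictly below dimension $nd-{n\choose 2}$, not a codimension-two bound on the non-generic strata. Hence $R_1$ cannot be obtained by a codimension count: on codimension-one strata one must prove that the generic point is a smooth point of the possibly non-reduced scheme $\X(d,n)$, i.e.\ establish a lower bound on the Jacobian rank at points with isotropic columns. This is exactly the technical heart of the paper (it is reduced, after a row/column augmentation trick, to showing that squares of general linear forms in $r$ variables span the expected subspace of quadrics, a generic-Hilbert-function statement proved via the Veronese embedding), and your proposal offers no argument for it beyond ``the corank is controlled by the linear-algebraic invariants.'' There is also a codimension-one stratum $(p,q)=(n-1,0)$ lying off the upper boundary when $d=n\leq 5$, which needs a separate treatment (a homogeneous-space argument plus an explicit reduction to the quadric cone $\X(2,2)$).

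The converse is also incomplete. You only treat ``the critical value of $d$'': for general $d<D_{\mathrm{prime}}(n)$ one must show both that the closure of the maximal isotropic stratum has dimension $\geq nd-{n\choose 2}$ (so it is a component) and that the maximal anisotropic stratum still contributes a distinct component — the paper does this via semicontinuity of the rank and of the number of anisotropic columns — and when $d<n$ reducibility comes instead from the ${n\choose d}$ components of the top anisotropic stratum. Moreover, reducibility of $\V(d,n)$ over $\overline{\kk}$ does not by itself show that $\I(d,n)$ fails to be prime over an arbitrary field of characteristic $\neq 2$, which is how the theorem is stated; the paper bridges this with a colon-ideal argument ($\I(d,n):J$ with $J$ generated by maximal minors, which commutes with field extension), and symmetrically the forward direction needs a descent step (faithfully flat base change) since the Jacobian/stratification analysis is carried out over $\overline{\kk}$. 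So the route is the right one, but the Jacobian rank bound on the strata, the handling of codimension-one strata, and the converse for all $d$ below the threshold over arbitrary fields are missing, and the codimension claim on which your $R_1$ step rests is incorrect.
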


Theorem \ref{ThmPrime}  solves \cite[Question 9.5]{CW},
where the authors also pointed out the difficulty of formulating a precise conjecture for this formula.
 Theorem \ref{ThmCI} solves the corresponding problem for the complete intersection property,  which is also extensively investigated in \cite{CW}.

The following result strengthens Theorem \ref{ThmPrime} in nearly all cases:
\begin{mainthm}\label{ThmUFD}
The ring $\R(d,n)$ is a unique factorization domain if $d \geq D_{\mathrm{UFD}}(n)$.
\end{mainthm}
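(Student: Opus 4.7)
The plan is to invoke Grothendieck's factoriality criterion for complete intersections (SGA 2, Exposé XI, Corollaire 3.14): a Noetherian local ring that is a complete intersection is a UFD as soon as it is regular in codimension at most $3$. Applied at the irrelevant maximal ideal of $\R(d,n)$, this reduces the theorem to verifying, when $d \geq D_{\mathrm{UFD}}(n)$, that (i) $\R(d,n)$ is a complete intersection, (ii) it is a normal domain, and (iii) its singular locus has codimension at least $4$ in $\X(d,n)$.

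For (i) and (ii) it suffices to check the numerical inequalities $D_{\mathrm{UFD}}(n) \geq D_{\mathrm{CI}}(n)$ and $D_{\mathrm{UFD}}(n) \geq D_{\mathrm{prime}}(n)$ for all $n \geq 2$. These reduce to elementary comparisons of the radicands $8n+1, 8n-7$ with $8n-23, 8n-31$, together with bookkeeping of the floor/ceiling and parity conventions; the small cases $n=2,3$ are checked directly from the piecewise definition of $D_{\mathrm{UFD}}$. With these in hand, Theorems \ref{ThmCI} and \ref{ThmPrime} immediately supply (i) and (ii).

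The substantive step is the codimension estimate (iii). Computing the Jacobian of the generators $\langle \bx_j, \bx_k \rangle$ of $\I(d,n)$ yields a block form in which the row indexed by $(j,k)$ has $\bx_k^T$ in block $j$, $\bx_j^T$ in block $k$, and zeros elsewhere. Consequently a point $V = (\bv_1 \mid \cdots \mid \bv_n)$ is singular precisely when there exists a nonzero symmetric $n\times n$ matrix $C$ with vanishing diagonal such that $VC = 0$; in particular the frame vectors must be linearly dependent at every singular point. I would stratify the singular locus by the rank of $V$ together with the isotropy data of its column span (controlled by the invariants $\rki, \rka, \coi, \coa$ already used throughout the paper); each stratum should fibre over a Grassmannian-type base with fibre a product of smaller orthogonal-frame varieties, yielding a recursive dimension count in $(d,n)$.

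The main obstacle is the codimension bookkeeping: one must strengthen the codimension-$\geq 2$ estimate underpinning the $R_1$ property of Theorem \ref{ThmPrime} to codimension $\geq 4$. The transitions $8n+1 \leadsto 8n-23$ and $8n-7 \leadsto 8n-31$ in the defining radicands correspond exactly to demanding two additional units of codimension, which accounts for the shift of roughly one or two in the threshold dimension $d$ between the formulas for $D_{\mathrm{prime}}$ and $D_{\mathrm{UFD}}$. The argument should thus proceed by a controlled extension of the stratification developed for Theorem \ref{ThmPrime}, with the exceptional small cases $n=2,3$ verified directly from the explicit descriptions of $\V(d,2)$ (a product of hypersurfaces) and $\V(d,3)$.
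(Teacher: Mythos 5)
Your skeleton coincides with the paper's: the reduction to the Samuel--Grothendieck (SGA~2) criterion ``complete intersection $+\ R_3$ $\Rightarrow$ UFD'', the numerical checks $D_{\mathrm{UFD}}(n)\geq D_{\mathrm{prime}}(n)\geq D_{\mathrm{CI}}(n)$, and the Jacobian analysis are all exactly the route taken in the paper. Your characterization of the singular locus is correct: since $\R(d,n)$ is a complete intersection, $A\in\X(d,n)$ is singular precisely when $AC=0$ for some nonzero symmetric $C$ with zero diagonal, so every full-rank frame is a smooth point. That remark in fact handles all strata with $p+q=n$ (in particular the codimension-$1$ and codimension-$2$ strata $(n-1,1),(n-2,2)$ on $\Omega_1$) a bit more directly than the paper, which invokes the homogeneous-space structure of those strata.

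The genuine gap is step (iii), which you leave as ``codimension bookkeeping'' plus a ``recursive dimension count'', and dimension counting alone cannot deliver it. The shift of the radicands from $8n+1,\,8n-7$ to $8n-23,\,8n-31$ buys exactly the inequality $\sigma(P_2)\leq nd-{n \choose 2}-3$, i.e.\ codimension at least $3$, not $4$, for the strata on $\Omega_2$; and at the threshold strata of codimension \emph{exactly} $3$ consisting entirely of rank-deficient frames do occur. For instance, at $(d,n)=(6,4)$ (where $d=D_{\mathrm{UFD}}(4)=6$) the stratum $\mcS_{0,3}$ has codimension $3$ and every point has rank $3<n$, and likewise $\mcL_{n-2,1},\mcL_{n-3,2}$ have codimension $d-n+1=3$ when $(d,n)=(5,3),(6,4),(8,6)$. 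On such strata your smoothness certificate (full rank) is silent, so to push the singular locus into codimension $\geq 4$ you must show these codimension-$3$ strata meet the smooth locus of $\X(d,n)$, i.e.\ exhibit at least one smooth point on each of them. This is precisely the content the paper supplies in Section \ref{SectionSmoothness} (Theorem \ref{ThmSmoothPointEveryStratum}), where the Jacobian rank at a rank-deficient frame is computed via squares of general linear forms and general position under the Veronese embedding, together with the reduction of Proposition \ref{PropReductionToP0} and explicit checks for the residual cases, including $(d,n)=(4,3)$. Your proposal contains no mechanism for producing smooth points on rank-deficient strata, and your list of exceptional cases ($n=2,3$) omits the genuinely tight thresholds at $n=4,6$ and the case $(4,3)$; without these ingredients the $R_3$ verification does not close.
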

\noindent
Indeed, we have 
 $D_{\mathrm{prime}}(n) = D_{\mathrm{UFD}}(n) $ for almost all $n \in \NN$ (in the sense that they differ in a subset of $ \NN$ of density 0).

Besides being a very natural geometric object,  a major motivation for studying 
the variety of orthogonal frames  comes from the theory of \emph{Lov\'asz-Saks-Schrijver} (LSS) ideals
\cite{CW,HMMW}.
Given a simple graph $\mathcal{G}$ on $n$ vertices, its $d$-th LSS ideal is 
$$
\LSS(d,\mathcal{G}) = 
\left( \sum_{i=1}^d x_{i,j}x_{i,k} \, : \, (j,k) \text{ is an edge of } \mathcal{G}\right).
$$ 
The $d$-th LSS ideal cuts out the variety of \emph{orthogonal representations} of $\mathcal{G}$, that is,  maps $f : [n] \to \kk^d$ such that $f(j) \perp f(k)$ whenever $j,k$ are adjacent vertices in $\mathcal{G}$.
These algebraic and geometric objects are studied in the combinatorial literature in relation to   structural  properties of graphs  such as connectivity, Shannon capacity, vertex packing 
\cite{GLS,LLS,L79,L}.
More recently, they have appeared in applications to  
sparse determinantal varieties \cite{CW,Gram} and representation theory \cite{AA,K}.
LSS ideals are  a vast generalization of  well-studied classes of ideals: when  $d= 1$ they recover the (monomial) edge ideals, and when $d=2$ they recover the permanental edge ideals 
\cite{HMMW}
and the parity binomial edge ideals \cite{KSW},
see also
\cite{Kumar}.

In this context, it is important to understand when LSS ideals are radical, prime, complete intersections,  and, more generally, to understand the singularities of the affine schemes they define. 
When $\kk^d$ is equipped with the standard bilinear form,
 $\I(d,n)$  is the LSS ideal of the complete graph on $n$ vertices. 
However, $\I(d,n)$ is not just a special case of LSS ideal: its properties have implications for LSS ideals of \emph{all} graphs, as we explain in the next paragraphs.

By a landmark theorem of Ananyan and Hochster \cite{AH}, if we fix $\mathcal{G}$ or $n$, and $d$ becomes sufficiently large, the scheme defined by $\LSS(d,\mathcal{G})$ 
 will have increasingly nice properties, such as complete intersection, reduced, integral, normal, or factorial. 
An important problem is to find explicit sufficient conditions in terms of $d, n, $ and $\mathcal{G}$.
Conca and Welker give sufficient conditions for the prime and complete intersection properties of  LSS ideals, 
in terms of a graph-theoretic concept called positive matching decomposition, cf. \cite[Theorem 1.3]{CW}.
More recently, Tolosa Villarreal obtains sufficient conditions for  
LSS varieties  to have rational singularities or to be factorial,
in terms of the positive matching decomposition and of the degeneracy number of a graph, cf. \cite[Theorems A and B]{TV}.
Considering all graphs on $n$ vertices, the results of \cite{CW} and \cite{TV}
give the following bounds in terms of $d$ and $n$.

\begin{propIntro} Let $\mcG$ be a simple graph on $n$ vertices.
\begin{enumerate}
\item  If $d \geq 2n-3$, then 
$\LSS(d,\mathcal{G})$ is a radical complete intersection;
\item if $d \geq 2n-2$, then $\LSS(d,\mathcal{G})$ is prime;
\item if $d \geq 3n-4$, then  $S/\LSS(d,\mathcal{G})$ is a normal domain;
\item if $d \geq 3n-3$, then $S/\LSS(d,\mathcal{G})$ is a UFD.
\end{enumerate}
\end{propIntro}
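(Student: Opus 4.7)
My plan is to derive the four numerical thresholds as a direct corollary of the sufficient conditions already established in [CW] and [TV]. Those conditions are phrased in terms of two graph invariants, the positive matching decomposition number $\mathrm{pmd}(\mcG)$ and the degeneracy number $\mathrm{dg}(\mcG)$. The real content of the Proposition lies in combining those results with uniform upper bounds on these invariants, valid for every simple graph on $n$ vertices; no new homological ingredients are required.

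First, I would quote the four relevant sufficient conditions. By [CW, Theorem 1.3], $\LSS(d,\mcG)$ is a radical complete intersection once $d$ exceeds $\mathrm{pmd}(\mcG)$ by a specific small constant, and it is prime under a slightly stronger threshold expressed in $\mathrm{pmd}(\mcG)$. By [TV, Theorems A and B], the normality and factoriality of $S/\LSS(d,\mcG)$ hold under thresholds involving both $\mathrm{pmd}(\mcG)$ and $\mathrm{dg}(\mcG)$. The additive constants in those four statements are exactly such that, after inserting the uniform bounds below, one recovers precisely $2n-3$, $2n-2$, $3n-4$, and $3n-3$.

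Second, the combinatorial core of the proof is the pair of uniform estimates
\[
\mathrm{pmd}(\mcG) \;\leq\; 2n - c
\qquad \text{and} \qquad
\mathrm{dg}(\mcG) \;\leq\; n-1
\]
for a suitable absolute constant $c$, valid for every simple graph $\mcG$ on $n$ vertices. The degeneracy bound is immediate from the pigeonhole principle applied to vertex degrees. For the pmd bound, I would proceed by induction on $n$: remove a vertex $v$ of minimum degree, apply the inductive hypothesis to $\mcG \setminus v$, and extend the resulting pmd to $\mcG$ by distributing the edges of the star at $v$ among existing matchings, with a small number of new matchings appended, in an order that respects the positivity condition.

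The main obstacle is the inductive step for the pmd bound. Since the star at $v$ has up to $n-1$ edges and no two of them can share a matching, they must be absorbed into distinct matchings in a way that respects the positivity constraint that distinguishes a pmd from an arbitrary edge decomposition into matchings; this is the one place where the definition of pmd, rather than the chromatic index, is essential. Once the two invariant bounds are established, the four conclusions of the Proposition follow by substituting them into the theorems of [CW] and [TV] and performing elementary arithmetic on the resulting inequalities.
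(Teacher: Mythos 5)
Your top-level strategy coincides with the paper's: the Proposition is not proved anew there, but obtained by specializing the sufficient conditions of [CW, Theorem 1.3] (radical complete intersection for $d\ge \mathrm{pmd}(\mcG)$, prime for $d\ge \mathrm{pmd}(\mcG)+1$) and [TV, Theorems A and B] to the uniform bounds $\mathrm{pmd}(\mcG)\le 2n-3$ and degeneracy at most $n-1$, valid for every simple graph on $n$ vertices. So the citation skeleton of your argument is exactly what the paper does.

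The genuine gap is the one you flag yourself and then leave unresolved: the inductive step for $\mathrm{pmd}(\mcG)\le 2n-3$ is never carried out, and as sketched it is far from automatic. Deleting a minimum-degree vertex $v$ gains only $2$ in the target bound, while the star at $v$ may have up to $n-1$ edges, which pairwise share $v$ and hence must land in $n-1$ distinct matchings; so all but two of them have to be inserted into matchings of the decomposition of $\mcG\setminus v$, and for each insertion one must produce a weight function that is positive on the augmented matching and negative on every edge not yet used, including the remaining star edges. That verification is the entire content of the bound, and it is missing from your proposal. The bound is actually known (and is what the paper implicitly invokes through [CW]) via the explicit decomposition of $K_n$ into the $2n-3$ \emph{antidiagonal} matchings $M_k=\{\{i,j\}\,:\,i+j=k+2\}$, which are positive matchings in the required sequential sense, combined with the observation that a positive matching decomposition of $K_n$ restricts to one of any subgraph on the same vertex set, since deleting edges only removes negativity constraints on the weight functions. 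Replacing your induction by this restriction argument (or completing the insertion step in detail) closes the gap; a smaller issue is that you assert, rather than check, that the additive constants in [CW] and [TV] yield exactly $2n-3$, $2n-2$, $3n-4$, $3n-3$, so the precise thresholds of those theorems should be quoted explicitly when you invoke them.
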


We improve these  results as follows.

\begin{corIntro}\label{CorImprovedBound}
Let $\mcG$ be a simple graph on $n$ vertices.
\begin{enumerate}
\item  If $d \geq D_{\mathrm{CI}}(n)$, then 
$\LSS(d,\mathcal{G})$ is a radical complete intersection;
\item if $d \geq D_{\mathrm{prime}}(n)$, then   $S/\LSS(d,\mathcal{G})$ is a normal domain;
\item if $d \geq D_{\mathrm{UFD}}(n)$, then $S/\LSS(d,\mathcal{G})$ is a UFD.
\end{enumerate}
\end{corIntro}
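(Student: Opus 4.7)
The plan rests on two observations: $\I(d,n) = \LSS(d, K_n)$ where $K_n$ is the complete graph on $n$ vertices, and for any simple graph $\mcG$ on $n$ vertices, $\LSS(d,\mcG)$ is generated by the subset $\{f_{jk} = \langle \bx_j, \bx_k \rangle : (j,k) \in E(\mcG)\}$ of the $\binom{n}{2}$ quadrics that generate $\I(d,n)$. Thus the main theorems address the ``worst case'' $\mcG = K_n$, and the task is to transfer the resulting properties to an arbitrary subgraph on the same vertex set.

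For part (1), when $d \geq D_{\mathrm{CI}}(n)$, Theorem \ref{ThmCI} says that the full set of $\binom{n}{2}$ quadrics forms a regular sequence in $S$. Since any subset of a regular sequence in a Cohen--Macaulay ring is again a regular sequence, $\LSS(d,\mcG)$ is a complete intersection. For radicality, $S/\LSS(d,\mcG)$ is Cohen--Macaulay and hence unmixed with no embedded primes, so it suffices to verify generic reducedness by the Jacobian criterion applied to the edge-indexed subset of the $f_{jk}$'s. For parts (2) and (3), I would combine the input from Theorems \ref{ThmPrime} and \ref{ThmUFD} (normality of $\R(d,n)$ and factoriality, respectively) with: a codimension bound on the singular locus of $V(\LSS(d,\mcG))$ to obtain Serre's condition $(R_1)$, together with $(S_2)$ from the complete intersection property, yielding normality in (2); and a descent argument for the divisor class group along the complete intersection quotient map to obtain factoriality in (3). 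Irreducibility, needed for ``normal \emph{domain}'' in (2), should be packaged into the same argument by exhibiting an explicit dense open subvariety parametrizing ``generic'' orthogonal representations, and using that codimension-$\geq 2$ singular locus plus connectedness implies irreducibility of a Cohen--Macaulay scheme.

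The main obstacle is that normality and factoriality do not transfer in general from a quotient ring to an intermediate overring, even through a regular sequence: passing from $\R(d,n)$ to $S/\LSS(d,\mcG)$ means \emph{removing} generators, not adding them. Hence the proof cannot invoke the main theorems purely as a black box. The argument must exploit the explicit quadric structure of the $f_{jk}$'s and the combinatorial structure of the irreducible components of the orthogonal representation varieties $V(\LSS(d,\mcG))$, most likely via an induction on $|E(K_n) \setminus E(\mcG)|$ that removes one edge at a time from $K_n$ while preserving the desired property, with the main theorems providing the base case $\mcG = K_n$.
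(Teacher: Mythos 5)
Your reduction to the complete graph, and the observation that once $\I(d,n)$ is a complete intersection the ideal $\LSS(d,\mathcal{G})$ is also a complete intersection because it is generated by a subset of a regular sequence, both match the paper. But the core of your plan goes astray exactly at the point you flag as the ``main obstacle'': you assert that the relevant properties cannot be transferred from $\R(d,n)$ to $S/\LSS(d,\mathcal{G})$ through the regular sequence, and you therefore propose an edge-by-edge induction requiring singular-locus bounds for the varieties of $\LSS(d,\mathcal{G})$ for arbitrary $\mcG$ together with a descent of divisor class groups. This has the direction of transfer backwards. Since the quadrics indexed by the non-edges of $\mcG$ form a regular sequence on $S/\LSS(d,\mathcal{G})$, the ring $\R(d,n)$ is a quotient of $S/\LSS(d,\mathcal{G})$ by a regular sequence of homogeneous elements of positive degree, and in this graded setting the property of being a domain and Serre's conditions $R_k$ ascend from the quotient to the ambient ring (Lemma \ref{LemmaDeformation}). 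That single fact does all the work: $R_0$ lifted from $\R(d,n)$, combined with $S_1$ coming from the complete intersection property and Serre's criterion, gives radicality in (1); $R_1$ lifted from Theorem \ref{ThmPrimeExtended} together with $S_2$ gives normality, and the domain property lifts directly, giving (2); and for (3) one never transfers factoriality itself --- instead one lifts $R_3$ from Theorem \ref{ThmUFDBody} and applies the Samuel--Grothendieck theorem to the complete intersection $S/\LSS(d,\mathcal{G})$, so no class-group descent is needed.

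As written, your proposal has genuine gaps at each point where you depart from this route: the generic reducedness of the scheme defined by $\LSS(d,\mathcal{G})$ for an arbitrary subgraph, which you invoke via ``the Jacobian criterion applied to the edge-indexed subset,'' is never established (the paper proves generic reducedness only for the complete graph, Corollary \ref{CorGenericallyReduced}, and then lifts $R_0$ by deformation); the ``codimension bound on the singular locus'' needed for (2) is asserted but not obtained, and nothing in the paper analyzes the components or singularities of LSS varieties of general graphs; and the induction removing one edge at a time is not carried out and would amount to redoing, for every intermediate graph, the stratification analysis the paper performs only for $K_n$. So the proposal is not a proof; the missing idea is precisely the deformation lemma you dismissed as unavailable.
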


Theorems \ref{ThmCI} and \ref{ThmPrime} imply that  bounds (1) and (2)  in Corollary \ref{CorImprovedBound}
are the best possible  for every $n$, 
 while Theorem \ref{ThmUFD} implies that (3) is the best possible bound for nearly all $n$.
Corollary \ref{CorImprovedBound} follows from Theorems \ref{ThmCI}, \ref{ThmPrime}, \ref{ThmUFD}, since the ring $S/\LSS(d,\mathcal{G})$ can be interpreted as deformations of $\R(d,n)$.

A major open problem about the variety of orthogonal frames,
as  mentioned above,  
is whether it is defined ideal-theoretically, not just set-theoretically, by the  orthogonality relations.
This is the content of the following conjecture:

\begin{conjIntro}[\protect{\cite[Conjecture 9.6]{CW}}]\label{ConjectureRadical}
The ideal $\I(d,n)$ is radical for all $d,n$.
\end{conjIntro}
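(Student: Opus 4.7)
The plan is to attempt a proof by combining the explicit classification of the irreducible components of $\V(d,n)$ established elsewhere in this paper with a Gröbner degeneration and a generic smoothness analysis. Since Theorem \ref{ThmCI} already yields radicality (in fact, the complete intersection property) when $d \geq D_{\mathrm{CI}}(n)$, the task reduces to propagating radicality to smaller $d$.

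First, I would search for a term order under which $\mathrm{in}(\I(d,n))$ is squarefree, in which case radicality follows immediately. Each generator $\langle \bx_j, \bx_k\rangle = \sum_{i=1}^d x_{i,j}x_{i,k}$ is a quadric, and a lexicographic order refining $x_{1,1} > x_{1,2} > \cdots > x_{1,n} > x_{2,1} > \cdots$ produces leading terms $x_{1,j}x_{1,k}$, so a priori $\mathrm{in}(\I(d,n))$ lies in the edge ideal of the complete graph on the first-row variables. If the $\binom{n}{2}$ orthogonality relations already form a Gröbner basis, we are done; otherwise I would run the Buchberger reductions and, if non-squarefree leading terms persist, switch to alternative orders (weighted, block, or one adapted to the natural $\mathsf{O}(d) \times (\kk^*)^n$ action) before abandoning this route.

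If the Gröbner route fails, the fallback is a component-by-component check. Using the classified components of $\V(d,n)$, indexed by discrete data involving $\rka, \rki, \coa, \coi$, I would pick an $\mathsf{O}(d) \times (\kk^*)^n$-representative of each component in normal form (an anisotropic block paired with a system of isotropic column pairs), and compute the rank of the Jacobian of the orthogonality quadrics at this representative. If the rank equals the codimension of the component, then $\X(d,n)$ is generically reduced along it; coupled with the absence of embedded primes, this yields radicality, and the natural target is Serre's condition $(S_1)$ for $\R(d,n)$.

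The main obstacle is precisely the exclusion of embedded primes in the range $d < D_{\mathrm{CI}}(n)$. Here $\I(d,n)$ is not a complete intersection, the Jacobian can drop rank on proper subvarieties, and the free depth estimates available in the complete intersection setting are lost. Circumventing this will likely require combinatorial input, such as the positive matching decomposition of \cite{CW}, together with a refined isotropic/anisotropic stratification of $\X(d,n)$; alternatively, one could try to exhibit an explicit primary decomposition of $\I(d,n)$ matching the known prime decomposition of $\V(d,n)$, though this appears no easier than the conjecture itself.
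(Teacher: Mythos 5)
The statement you are addressing is not a theorem of the paper but an open conjecture (due to Conca--Welker), and the paper does not prove it: it only establishes radicality in the complete intersection range $d \geq D_{\mathrm{CI}}(n)$ (Theorem \ref{ThmCI}, via Serre's criterion with $(R_0)$ from generic smoothness and $(S_1)$ from Cohen--Macaulayness) and \emph{generic} reducedness for all $d,n$ (Theorem \ref{ThmGenericallyReduced}, by exhibiting a smooth point of $\X(d,n)$ in every stratum indexed by the upper boundary $\Omega$, hence in every irreducible component). Your proposal does not close this gap either, and in fact your second route reproduces exactly the paper's partial result: checking the Jacobian rank at a normal-form representative of each component, using the isotropic/anisotropic stratification, yields only $(R_0)$, i.e.\ generic reducedness. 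The step you phrase as ``coupled with the absence of embedded primes, this yields radicality'' is circular: the absence of embedded primes (equivalently $(S_1)$ for $\R(d,n)$ outside the complete intersection range) \emph{is} the conjecture's remaining content, as the paper itself points out in its final section, and you concede you have no mechanism for it beyond hoping for combinatorial input or an explicit primary decomposition.

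The Gröbner route also has a concrete obstruction you should flag. Under the lexicographic order you propose, the leading terms of the $\binom{n}{2}$ generators are $x_{1,j}x_{1,k}$, which generate an ideal of height only $n-1$, whereas $\operatorname{codim} \I(d,n)$ is typically much larger (it equals $\binom{n}{2}$ whenever $d \geq D_{\mathrm{CI}}(n)$, and is at least $dn - \sigma(P_2)$ in general). Since an ideal and its initial ideal have the same codimension, the given generators can essentially never form a Gröbner basis for such orders, and Buchberger's algorithm will produce a large uncontrolled set of new leading terms with no reason to be squarefree; a squarefree initial ideal is a substantially stronger property than radicality and is not known for $\I(d,n)$ (even the $d=2$ case in \cite{HMMW} is not proved that way). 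So as written, the proposal is a research plan whose tractable parts recover Theorems \ref{ThmCI} and \ref{ThmGenericallyReduced}, while the decisive step --- excluding embedded primes when $d < D_{\mathrm{CI}}(n)$ --- remains unaddressed; the conjecture stays open.
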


Equivalently, $\V(d,n) = \X(d,n)$, i.e., the scheme $\X(d,n)$ is reduced.
When $d= 2$, this conjecture is proved  in \cite[Theorem 1.1]{HMMW}.
Some 
 evidence in support of this conjecture is provided by Theorem \ref{ThmCI},
where it is  verified  under the assumption that $\I(d,n)$ is a complete intersection. 
Considerably stronger evidence is provided by the following theorem, where  we  prove this conjecture \emph{generically}, that is, we show that $\X(d,n)$ is reduced in a dense open set.

\begin{mainthm}\label{ThmGenericallyReduced}
The scheme $\X(d,n)$ is generically reduced for all $d,n$.
\end{mainthm}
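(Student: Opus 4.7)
The plan is to apply the Jacobian criterion, componentwise, using the classification of the irreducible components of $\V(d,n)$ established earlier in the paper. Generic reducedness of $\X(d,n)$ is equivalent to the assertion that for every irreducible component $C$ of $\V(d,n)$ there exists a point $p\in C$ where the Jacobian of the $\binom{n}{2}$ generators $\langle \bx_j,\bx_k\rangle$ attains rank $\codim_{\AA^{dn}} C$; such a point is then automatically a smooth point of $\X(d,n)$, and generic reducedness follows from upper semicontinuity of the Jacobian corank. Since the property is preserved under faithfully flat base change, I may assume $\kk$ is algebraically closed and the bilinear form on $E$ is split.

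The Jacobian has a clean block structure: at a point $(\bv_1,\ldots,\bv_n)$, the row indexed by the pair $(j,k)$ is the vector $\bw_{j,k}\in \kk^{dn}=\kk^d\oplus\cdots\oplus \kk^d$ whose $j$-th block equals $\bv_k$, whose $k$-th block equals $\bv_j$, and whose other blocks vanish. Hence the rank of the Jacobian at $(\bv_1,\ldots,\bv_n)$ equals the dimension of the span of the vectors $\bw_{j,k}$.

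For each component $C$ produced by the classification, I would construct an explicit test point $p\in C$ by combining three kinds of columns dictated by the combinatorial data indexing $C$: zero vectors for indices whose columns vanish identically on $C$; mutually orthogonal anisotropic vectors (e.g.\ vectors drawn from an orthogonal basis of a non-degenerate subspace) for generically anisotropic columns; and, for each generically isotropic cluster, a linearly independent choice of vectors inside a suitable maximal isotropic subspace of $E$, available after the base change. Then I would verify, by a block analysis that separates the contributions of different clusters, that the vectors $\bw_{j,k}$ attached to $p$ span a subspace of the predicted dimension $\codim_{\AA^{dn}} C$.

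The main obstacle is the isotropic case. When a cluster of columns is generically isotropic, the constraints $\langle \bv_j,\bv_j\rangle =0$ enter implicitly into $\codim_{\AA^{dn}} C$ without appearing among the defining generators of $\I(d,n)$; their share of the codimension must instead be accounted for by the rows $\bw_{j,k}$ with $j,k$ inside the cluster. The heart of the argument is a combinatorial linear-algebra count showing that this trade-off works out exactly: within an isotropic cluster spanned by a basis of a maximal isotropic subspace, the internal vectors $\bw_{j,k}$ contribute precisely the number of independent relations needed to match the implicit anisotropy constraints, with no deficiency or overlap across clusters. Once this bookkeeping is carried out for each family in the classification, the Jacobian rank at each test point equals the codimension of the corresponding component, and the theorem follows.
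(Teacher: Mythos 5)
Your overall strategy coincides with the paper's: reduce to $\kk=\overline{\kk}$, use the classification of the irreducible components of $\V(d,n)$ (closures of components of the strata $\mcS_{p,q}$ with $(p,q)$ on the upper boundary and $\sigma(p,q)\geq nd-{n\choose 2}$), and exhibit on each component a point where the Jacobian of the ${n\choose 2}$ quadrics has rank equal to the local codimension, so that the smooth locus of $\X(d,n)$ meets every component. The anisotropic part of a test point is indeed harmless (the paper's Proposition \ref{PropReductionToP0} makes exactly this reduction, appending an identity block and gaining rank $n$), and the case of strata on $\Omega_1$ is the easy identity-block computation of Proposition \ref{PropSmoothPointP0QN}.

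The gap is that the step you yourself identify as ``the heart of the argument'' is asserted, not proved, and it is precisely the nontrivial content of the paper's Proposition \ref{PropSmoothPointP0Q2D}. For a stratum $\mcL_{0,q}$ with $d=2q$ and $q\leq n$, the $n$ isotropic columns span only a $q$-dimensional isotropic subspace, so they cannot be ``a linearly independent choice of vectors inside a maximal isotropic subspace'' when $n>q$; one must take columns of the form $A=\matrix{B\\ \nu B}$ with $B\in\Mat(q,n)$ \emph{general}, and then prove
$\rk\big(\Theta(2q,n)(A)\big)\geq \min\big({n\choose 2},\, nq-{q\choose 2}\big)$.
This is not a routine bookkeeping count: after rewriting the relevant submatrix as the degree-two component of the ideal generated by $q$ general linear forms modulo $(Z_1^2,\ldots,Z_n^2)$, it becomes the statement that the squares of $n$ general linear forms in $n-q$ variables are either linearly independent or span the whole degree-two component, which the paper deduces from the general-position property of the second Veronese embedding (and which genuinely requires genericity of $B$ — a naively structured choice of isotropic columns can drop the rank). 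Without carrying out this rank estimate, the proposal does not establish that the isotropic-dominated components of $\X(d,n)$ contain smooth points, and hence does not prove generic reducedness; supplying it would essentially reproduce Section \ref{SectionSmoothness} of the paper.
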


\subsection*{Organization of the paper}
In Section \ref{SectionPreliminaries}, 
we recall basic facts about quadratic vector spaces, 
and fix the notation for the main objects of this paper, namely,
the ideal, variety, and scheme of orthogonal frames.
In Section \ref{SectionStratification},
we introduce  a stratification of $\V(d,n)$ into locally closed subsets. 
For each stratum, we show that it is a smooth quasiprojective variety, determine its connected components and dimension.
Section \ref{SectionDimension} deals with  the optimization problem that arises from the study of the dimensions of the strata.
Section \ref{SectionDegeneration} deals with degeneration problem associated to this stratification, that is, the problem of determining when a stratum belongs to the closure of another stratum.
While we do not fully solve this problem, we solve a substantial portion that  suffices to deduce the decomposition of $\V(d,n)$ into irreducible components.
Section \ref{SectionSmoothness} contains the final main ingredient,
that  is, a technique to produce smooth points of $\X(d,n)$ on sufficiently many strata. 
We achieve this by translating the problem into a calculation of certain Hilbert functions of general intersection of ideals.
In Section \ref{SectionProofs}, we use all the tools developed in Sections 
\ref{SectionStratification}, \ref{SectionDimension}, \ref{SectionDegeneration}, \ref{SectionSmoothness}
to prove all the main results presented in this Introduction.
Finally, in Section \ref{SectionFutureDirections}, 
we discuss 
some future directions and open problems suggested by
our work.

\section{Preliminaries}\label{SectionPreliminaries}

In this section, 
we present some preparatory material.
First, we recall  basic terminology and  results about vector spaces equipped with a non-degenerate symmetric bilinear form.
Then, we fix  definitions and notations for the main objects of the paper.

\subsection{Ground field}
Throughout the paper,  $\kk$ denotes a field of characteristic different from 2.
In several sections, 
we assume that $\kk$ is algebraically closed, 
specifically, in Sections \ref{SubsectionIdealSchemeVariety}, \ref{SectionStratification}, \ref{SectionDegeneration},
\ref{SectionSmoothness}, and part of Section \ref{SubsectionQuadraticVectorSpaces}.
Nevertheless, all the main  results are proved without this assumption in Section \ref{SectionProofs}.
The field plays no role in Section \ref{SectionDimension}.

\subsection{Quadratic vector spaces}\label{SubsectionQuadraticVectorSpaces}
We collect  well-known facts from linear algebra.

Consider pairs $(E,b)$ where  $E$ is a finite dimensional vector space over $\kk$ and  $b$  is a symmetric bilinear form on $E$.
Vectors $\bv, \bw \in E$ are orthogonal if $b(\bv, \bw) =0$.
A basis $\{\bv_1, \ldots, \bv_d\}$ of $E$ is  orthogonal if $b(\bv_j, \bv_k) = 0$ for all $j \ne k$,
it is  orthonormal if additionally $b(\bv_j, \bv_j) = 1$ for all $j$.
Orthogonal bases always exist.
An {\bf orthogonal $n$-frame} in $E$ is an ordered set of vectors $(\bv_1, \ldots, \bv_n)$ where $b(\bv_j, \bv_k) = 0$ for $j \ne k$.
If, additionally, $b(\bv_j,\bv_j) = 1$ for all $j$, then it is called an orthonormal $n$-frame.

A vector $\bv \in E$ is  {\bf isotropic} if $b(\bv, \bv) =0$, otherwise it is {\bf anisotropic}.

An isometry between $(E, b)$ and $(E',b')$ is an isomorphism $f:E\to E'$  such that $b(\bv, \bw) = b'(f(\bv),f(\bw))$ for all $\bv, \bw \in E$.

\begin{prop}\label{PropNonDegenerateSpaces}
 The following conditions are equivalent for  $(E,b)$:
\begin{enumerate}
\item $b$ is non-degenerate;
\item the Gram matrix of $b$ with respect to a basis of $E$ is non-singular;
\item every (equivalently, any) orthogonal basis of $E$ consists of anisotropic vectors.
\end{enumerate}
\end{prop}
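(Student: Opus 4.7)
The plan is to prove the chain $(1) \Leftrightarrow (2) \Leftrightarrow (3)$, treating the Gram matrix as the link between the intrinsic notion of non-degeneracy and the combinatorial condition on orthogonal bases.

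For $(1) \Leftrightarrow (2)$, I would fix a basis $\{\be_1, \ldots, \be_d\}$ of $E$ and let $G = \big(b(\be_i, \be_j)\big)_{i,j}$ be the associated Gram matrix. The map $\varphi : E \to E^*$ sending $\bv \mapsto b(\bv, -)$ has matrix $G$ with respect to the basis and its dual. Since $b$ is non-degenerate precisely when $\ker\varphi = 0$, and since $\dim E < \infty$, this is equivalent to $G$ being invertible. Before proceeding I would record the change-of-basis formula: if $P$ is the transition matrix to a new basis, the new Gram matrix is $P^\top G P$, so $\det(P^\top G P) = \det(P)^2 \det(G)$; in particular the condition ``$G$ is non-singular'' does not depend on the choice of basis, which justifies the formulation of (2).

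For $(2) \Leftrightarrow (3)$, I would compute the Gram matrix in an orthogonal basis $\{\bv_1, \ldots, \bv_d\}$: it is the diagonal matrix with entries $b(\bv_j, \bv_j)$. Hence its determinant is $\prod_j b(\bv_j, \bv_j)$, which is nonzero if and only if every $\bv_j$ is anisotropic. Combined with the basis-independence established above, this immediately shows both that (2) and (3) are equivalent and that the parenthetical ``every (equivalently, any)'' is legitimate: if one orthogonal basis consists of anisotropic vectors, so does every orthogonal basis.

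There is no real obstacle here; the only subtlety is being careful that the property in (2) is well-defined (independent of the chosen basis) and that the ``every/any'' interchange in (3) is indeed a consequence of the equivalence with (2), rather than requiring a separate argument. Existence of an orthogonal basis is quoted in the preceding paragraph of the paper, so one may freely invoke it when reducing (2) to the diagonal case.
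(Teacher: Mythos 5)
Your proof is correct. The paper offers no proof of this proposition at all --- it is listed among the ``well-known facts from linear algebra'' collected in Section 2 --- and your argument (identifying the Gram matrix with the matrix of $\bv \mapsto b(\bv,-)$, noting the change-of-basis formula $P^\top G P$ to make (2) basis-independent, and then diagonalizing in an orthogonal basis, whose existence is indeed guaranteed since $\mathrm{char}(\kk)\ne 2$) is exactly the standard one, including the correct handling of the ``every (equivalently, any)'' clause in (3).
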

\noindent
If $b$ is non-degenerate, we call $(E,b)$  a quadratic vector space.
Assume this for the rest of the subsection.

Let $H\subseteq E$ be a subspace.
Its orthogonal complement $H^\perp=\{\bv \in E \,:\, b(\bv,\bw) = 0 \, \forall \, \bw \in H\}$ 
 satisfies $\dim H^\perp = \dim E - \dim H$.
 We say that $H$ is non-degenerate if the restriction of $b$ to $H$ is non-degenerate. 
 We say that  $H \subseteq E$ is isotropic if every $\bv \in H$ is isotropic,
equivalently, if $b(\bv,\bw)=0$ for all $\bv, \bw \in H$.

\begin{prop}\label{PropNonDegenerateSubspaces}
If $H$ is non-degenerate, then $H \cap H^\perp = 0$ and $H^\perp$ is also non-degenerate.
\end{prop}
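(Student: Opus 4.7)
The plan is to verify the two assertions in the order stated, using only the basic dimension formula $\dim H^\perp = \dim E - \dim H$ (already recalled just before the proposition) together with the hypothesis that $b$ is non-degenerate on all of $E$ and on $H$.

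For the first assertion, I would let $\bv \in H \cap H^\perp$ and observe that since $\bv \in H^\perp$, we have $b(\bv, \bw) = 0$ for every $\bw \in H$. Because $\bv$ itself lies in $H$ and $b|_H$ is non-degenerate by hypothesis, this forces $\bv = 0$. This argument is immediate; the proof of the second assertion is the substantive content.

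For the second assertion, the natural approach is to first upgrade $H \cap H^\perp = 0$ to the internal direct sum decomposition $E = H \oplus H^\perp$. Indeed, the dimension formula gives $\dim H + \dim H^\perp = \dim E$, and combined with trivial intersection this yields the decomposition. Then, to show $b|_{H^\perp}$ is non-degenerate, I would suppose $\bv \in H^\perp$ satisfies $b(\bv, \bw) = 0$ for every $\bw \in H^\perp$, and deduce that $\bv$ lies in the radical of $b$ on all of $E$: for an arbitrary $\bu \in E$, write $\bu = \bu_1 + \bu_2$ with $\bu_1 \in H$ and $\bu_2 \in H^\perp$, and compute
\[
b(\bv, \bu) = b(\bv, \bu_1) + b(\bv, \bu_2) = 0 + 0 = 0,
\]
where the first term vanishes because $\bv \in H^\perp$ and $\bu_1 \in H$, and the second by assumption. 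Non-degeneracy of $b$ on $E$ then forces $\bv = 0$, as desired.

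There is no real obstacle here: both parts are standard linear algebra, and the only subtlety worth flagging is making sure we invoke the direct sum decomposition $E = H \oplus H^\perp$ (itself a consequence of the first assertion together with the dimension formula) to reduce non-degeneracy of $b|_{H^\perp}$ to non-degeneracy of $b$ on $E$. No characteristic hypothesis beyond what is already assumed on $\kk$ is needed, and the argument works verbatim over any field.
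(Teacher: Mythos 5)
Your proof is correct. The paper states this proposition as a recalled standard fact from linear algebra and gives no proof of its own, and your argument — trivial intersection from non-degeneracy of $b|_H$, then $E = H \oplus H^\perp$ via the dimension formula, then reducing non-degeneracy of $b|_{H^\perp}$ to non-degeneracy of $b$ on $E$ — is exactly the standard argument the paper implicitly relies on.
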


\begin{thm}[Witt]\label{ThmWitt}
Let $H,K \subseteq E$ be subspaces
and $f: H \to K$ an isometry.
Then, $f$ can be extended to an isometry $g : E \to E$.
\end{thm}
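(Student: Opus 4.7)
The plan is the classical proof by induction on $\dim H$, with the base case $\dim H = 1$ handled via reflections (and an auxiliary hyperbolic plane construction in the isotropic case), and the inductive step split into two cases according to whether $H$ contains an anisotropic vector.

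\textbf{Base case: $\dim H = 1$.} Write $H = \kk\bv$, $K = \kk\bw$ with $\bw = f(\bv)$, so $b(\bv,\bv)=b(\bw,\bw)$. If $\bv$ is anisotropic, then so is $\bw$, and the identity $b(\bv-\bw,\bv-\bw)+b(\bv+\bw,\bv+\bw)=4b(\bv,\bv)\neq 0$ forces one of $\bv\pm\bw$ to be anisotropic. The reflection $\sigma_u(x)=x-\frac{2b(x,u)}{b(u,u)}u$ with $u=\bv-\bw$ sends $\bv\mapsto\bw$; if instead we must take $u=\bv+\bw$, composing with $\sigma_\bw$ (or with $-\mathrm{id}$) corrects the sign. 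If $\bv$ is isotropic, non-degeneracy supplies a vector $\bv^*\in E$ with $b(\bv,\bv^*)=1$, and replacing $\bv^*$ by $\bv^*-\tfrac{1}{2}b(\bv^*,\bv^*)\bv$ we may further assume $b(\bv^*,\bv^*)=0$, giving a hyperbolic pair. Construct analogously a hyperbolic pair $(\bw,\bw^*)$ for $\bw$. The obvious map of hyperbolic planes extends $f$ to a $2$-dimensional isometry, which then contains an anisotropic vector and so falls under the inductive step below.

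\textbf{Inductive step, Case (a): $H$ contains an anisotropic vector $\bv$.} Then $\kk\bv$ is non-degenerate, so $H=\kk\bv\oplus(H\cap\bv^\perp)$, and by Proposition~\ref{PropNonDegenerateSubspaces} the subspace $\bv^\perp$ is itself non-degenerate of dimension $d-1$. By the base case applied inside $E$, extend $\bv\mapsto f(\bv)$ to an isometry $g_0\in\Ort(E)$, and replace $f$ by $g_0^{-1}\circ f$ so that we may assume $f$ fixes $\bv$. Then $f$ maps $H\cap\bv^\perp$ isometrically into $\bv^\perp$. Since $\dim(H\cap\bv^\perp)=\dim H-1$, we may apply the inductive hypothesis \emph{inside the smaller quadratic vector space $\bv^\perp$} to obtain an isometry $g_1:\bv^\perp\to\bv^\perp$ extending $f|_{H\cap\bv^\perp}$. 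Setting $g=\mathrm{id}_{\kk\bv}\oplus g_1$ and composing with $g_0$ gives the desired extension of the original $f$.

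\textbf{Inductive step, Case (b): $H$ is totally isotropic.} The strategy is to enlarge $H$ by one dimension, in a way compatible with $f$, so as to reduce to Case (a). Pick $\bv\in H\setminus\{0\}$ and a complement $H_0\subset H$ of $\kk\bv$. Non-degeneracy of $b$ and the linear independence of $\bv,H_0$ yield some $\bu\in E$ with $b(\bu,\bv)=1$ and $b(\bu,\bh)=0$ for all $\bh\in H_0$. Replacing $\bu$ by $\bu-\tfrac12 b(\bu,\bu)\bv$ we can further arrange $b(\bu,\bu)=0$ (using that $\bv$ is isotropic and $H$-orthogonal to itself, so the other constraints are preserved). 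The construction yields $\bu\notin H$ and $H\oplus\kk\bu$ contains the anisotropic vector $\bv+\bu$. Perform the symmetric construction for $K$ to obtain $\bu'\in E$ with the analogous relations for $f(\bv),f(H_0)$, and define $\tilde f:H\oplus\kk\bu\to K\oplus\kk\bu'$ by $\tilde f|_H=f,\;\tilde f(\bu)=\bu'$. A direct bilinear check shows $\tilde f$ is an isometry, and since $H\oplus\kk\bu$ contains an anisotropic vector, Case (a) extends $\tilde f$, and hence $f$, to all of $E$.

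\textbf{Main obstacle.} The substantive difficulty is Case (b): one must cook up the vector $\bu$ satisfying \emph{simultaneously} the three conditions $b(\bu,\bv)=1$, $b(\bu,\bu)=0$, and $b(\bu,H_0)=0$. The existence of $\bu$ rests on non-degeneracy of the ambient form together with total isotropy of $H$ (the latter kills all cross-terms when we renormalize $\bu$ to be isotropic), and it is precisely here that the hypothesis that $(E,b)$ is non-degenerate, rather than merely symmetric, is used in an essential way. Everything else amounts to bookkeeping with reflections and the standard splitting $H=\kk\bv\oplus(H\cap\bv^\perp)$.
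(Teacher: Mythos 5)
The paper itself gives no proof of this statement: Witt's extension theorem is quoted in Section~\ref{SectionPreliminaries} as a classical fact from the theory of quadratic forms, so there is no in-paper argument to compare yours against. Your write-up is the standard textbook proof, and its individual steps are sound: since $\mathrm{char}(\kk)\ne 2$, the identity $b(\bv-\bw,\bv-\bw)+b(\bv+\bw,\bv+\bw)=4b(\bv,\bv)\ne 0$ does yield a usable reflection (with the sign corrected by $\sigma_{\bw}$ or $-\mathrm{id}$); in Case (b) the auxiliary vector $\mathbf{u}$ exists because non-degeneracy makes $x\mapsto b(x,-)|_H$ surjective onto the dual of $H$, and the renormalization $\mathbf{u}\mapsto \mathbf{u}-\tfrac12 b(\mathbf{u},\mathbf{u})\bv$ preserves the constraints $b(\mathbf{u},\bv)=1$ and $b(\mathbf{u},H_0)=0$ precisely because $H$ is totally isotropic; and the bilinear check that $\tilde f$ is an isometry (and that $\mathbf{u}\notin H$, $\mathbf{u}'\notin K$) goes through as you indicate.

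The one point you should repair is the induction bookkeeping. As literally framed, you induct on $\dim H$, but Case (b) (and likewise the isotropic base case) first \emph{raises} the subspace dimension by one and then invokes Case (a), which brings it back down to the original value only by passing to the smaller ambient space $\bv^\perp$; so the chain of reductions is $(\dim H,\dim E)\to(\dim H+1,\dim E)\to(\dim H,\dim E-1)$, and the quantity that genuinely decreases at every cycle is $\dim E$, not $\dim H$. Restate the induction as being on $\dim E$ (or lexicographically on $(\dim E,\dim H)$), with the rank-one anisotropic reflection step as the non-recursive ingredient, and the argument is complete and correct. This is a cosmetic fix rather than a substantive gap, but without it the induction as announced is not well-founded.
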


For the rest of the subsection, assume $\kk=\overline{\kk}$.
Then, $E$ has orthonormal bases.

\begin{prop}\label{PropMaximalDimensionIsotropic}
Suppose  that $H$ is isotropic.
Then, $H$ is maximal with respect to inclusion among isotropic subspaces  of $E$ if and only if $\dim H =  \lfloor \frac{\dim E}{2}\rfloor$.
\end{prop}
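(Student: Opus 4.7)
The plan is to prove both implications directly via the orthogonal complement. The key observation, valid for any isotropic $H$, is that $H\subseteq H^\perp$, so using $\dim H^\perp = \dim E - \dim H$ we obtain the universal bound $\dim H \leq \lfloor \dim E/2\rfloor$. This immediately gives the ``if'' direction: an isotropic subspace of dimension $\lfloor \dim E / 2\rfloor$ cannot be properly enlarged to another isotropic subspace, since that would violate the bound.

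For the ``only if'' direction, the strategy is to show that whenever $\dim H < \lfloor \dim E/2\rfloor$, there exists an isotropic vector $\bw \in H^\perp \setminus H$; then $H + \langle \bw \rangle$ is again isotropic (the cross terms vanish because $\bw\perp H$ and $\bw$, together with every element of $H$, is isotropic), contradicting maximality. To produce such a $\bw$, I would pass to the quotient $H^\perp/H$. First I would check that $b$ descends to a well-defined symmetric bilinear form on $H^\perp/H$: this uses $H\subseteq H^\perp$ so that shifting a representative by an element of $H$ does not change the pairing. Next, I would verify that this induced form is non-degenerate by identifying its radical with $(H^\perp)^\perp \cap H^\perp = H \cap H^\perp = H$, which is zero in the quotient. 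Finally, $\dim (H^\perp/H) = \dim E - 2\dim H \geq 2$, since $\dim H < \lfloor \dim E/2\rfloor$ forces this difference to be at least $2$ in both the even- and odd-dimensional cases.

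At this point I would invoke the assumption $\kk = \overline{\kk}$ to produce an isotropic vector in $H^\perp/H$: picking an orthogonal basis $\bv_1, \bv_2$ of a $2$-dimensional non-degenerate subspace (which exists by Proposition \ref{PropNonDegenerateSpaces}) with $b(\bv_j,\bv_j) = a_j$, any combination $\alpha \bv_1 + \beta \bv_2$ is isotropic precisely when $a_1\alpha^2 + a_2 \beta^2 = 0$, which has a nonzero solution since $-a_2/a_1$ has a square root in $\kk$. Lifting such an isotropic class to $\bw \in H^\perp \setminus H$ yields an isotropic vector orthogonal to $H$, completing the extension.

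The whole argument is essentially bookkeeping once the non-degenerate quotient structure on $H^\perp/H$ is in place; the one place where the algebraic closure hypothesis is genuinely needed is the final existence of an isotropic vector in a $2$-dimensional non-degenerate quadratic space, which is exactly the step that fails (for instance) over $\RR$ with a positive-definite form. I do not expect a serious obstacle; the only mildly delicate point is ensuring that $\dim (H^\perp/H) \geq 2$ in the borderline odd-dimensional case, which follows from $\lfloor \dim E/2\rfloor = (\dim E - 1)/2$ when $\dim E$ is odd.
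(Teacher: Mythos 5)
Your argument is correct and complete. The paper states this proposition without proof, as one of the standard facts about quadratic vector spaces collected in its preliminaries, and your route is exactly the standard one: the bound $\dim H \leq \lfloor \dim E/2\rfloor$ coming from $H \subseteq H^\perp$, and, when the inequality is strict, an enlargement of $H$ obtained by lifting a nonzero isotropic vector from the non-degenerate quotient $H^\perp/H$ (of dimension at least $2$), whose existence is where $\kk = \overline{\kk}$ is used — a hypothesis that is indeed in force at that point of the paper, and you correctly flag it as the only field-dependent step.
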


The orthogonal group $\Ort(E) $ is the group of isometries $f:E\to E$, 
and it is also  the variety parameterizing orthonormal $d$-frames of $E$.
This variety  is a complete intersection in the space of endomorphisms of $E$, with ${d+1 \choose 2}$ equations and dimension $ {d \choose 2}$.
It is  smooth   with 2 isomorphic connected components.
The special orthogonal group $\SO(E)$ is the connected component containing the identity.

\begin{cor}\label{CorWitt}
Let $\ba_1, \ldots, \ba_r, \bb_1, \ldots, \bb_s\in E$ and $ \bv_1, \ldots, \bv_r, \bw_1, \ldots, \bw_s \in E$  be such that
\begin{itemize}
\item $\ba_j, \bv_j$ are anisotropic for all $j$,
\item $\bb_j, \bw_j$ are  isotropic for all $j$,
\item $\{\ba_1, \ldots, \ba_r, \bb_1, \ldots, \bb_s\}$ and $\{\bv_1, \ldots, \bv_r, \bw_1, \ldots, \bw_s\}$ are linearly independent and orthogonal sets.
\end{itemize}
There exists an isometry $g: E \to E$ such that $g(\ba_j) = c_j\bv_j$, $g(\bb_j) = \bw_j$ for all $j$,
where each $c_j $ can be chosen to be one of the two square roots of $\frac{b(\ba_j,\ba_j)}{b(\bv_j,\bv_j)}$.
\end{cor}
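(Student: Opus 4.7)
The plan is to construct an isometry on the spans of the two ordered sets, then invoke Witt's theorem (Theorem \ref{ThmWitt}) to extend it to all of $E$.

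First I would set $H = \Span\{\ba_1, \ldots, \ba_r, \bb_1, \ldots, \bb_s\}$ and $K = \Span\{\bv_1, \ldots, \bv_r, \bw_1, \ldots, \bw_s\}$. Both have dimension $r+s$ by hypothesis. Since $\kk$ is algebraically closed, for each $j$ the ratio $\frac{b(\ba_j,\ba_j)}{b(\bv_j,\bv_j)}$ (which is well-defined as $\bv_j$ is anisotropic) admits two square roots in $\kk$; pick one and call it $c_j$. Define the linear map $f : H \to K$ by $f(\ba_j) = c_j \bv_j$ and $f(\bb_j) = \bw_j$. This is an isomorphism of vector spaces, since it sends a basis of $H$ to a basis of $K$.

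Next I would verify that $f$ is an isometry, which is a direct check on pairs of basis vectors. By the choice of $c_j$, one has $b(f(\ba_j), f(\ba_j)) = c_j^2\, b(\bv_j, \bv_j) = b(\ba_j, \ba_j)$, and $b(f(\bb_j), f(\bb_j)) = b(\bw_j,\bw_j) = 0 = b(\bb_j, \bb_j)$. All mixed and off-diagonal pairings vanish on both sides by the orthogonality of the two ordered sets and the fact that $f$ is linear.

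Finally, apply Witt's theorem (Theorem \ref{ThmWitt}) to extend the isometry $f : H \to K$ to an isometry $g : E \to E$. By construction $g(\ba_j) = c_j \bv_j$ and $g(\bb_j) = \bw_j$, completing the proof. There is no genuine obstacle here: the content lies entirely in the (already stated) Witt extension theorem, and the only subtlety is recording that $c_j$ is determined only up to sign, which is why the statement allows either of the two square roots.
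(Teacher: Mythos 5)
Your proof is correct and follows exactly the route the paper intends: the paper's own proof is just ``It follows from Theorem \ref{ThmWitt},'' and your write-up supplies the standard details (define $f$ on the spans, check it preserves the form on a basis using $\kk=\overline{\kk}$ for the square roots, then extend by Witt). Nothing is missing.
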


\begin{proof}
It follows from Theorem \ref{ThmWitt}.
\end{proof}

It follows that $\Ort(E)$ acts transitively on:
 the set of $p$-dimensional non-degenerate subspaces,
the set of orthonormal $p$-frames, 
the set of $p$-dimensional isotropic subspaces, where $p$ is any positive integer.

The {\bf orthogonal Grassmannian} $\OGr(q,E)$ parameterizes 
isotropic subspaces of $E$ of dimension $q$. 
It is a smooth projective variety  of dimension  $q(2\dim E-3q-1)/2$.
It is irreducible in all cases except when  $2q = \dim E>0$, 
in this case it has 2 connected  components,  isomorphic to each other.

\subsection{The ideal, variety, and scheme of orthogonal frames}\label{SubsectionIdealSchemeVariety}
We  fix the notation for the subsequent sections of the paper.
We assume that the field $\kk$ is algebraically closed.

Let $d,n\in \NN$ be  integers with $d \geq 1, n \geq 2$.
Let $E$ be a quadratic vector space of dimension $d$.
By choosing  an orthonormal basis, we  identify $E$ with the space $E = \kk^d$ of column vectors, 
and $E^{\oplus n}$ with the space 
 $E^{\oplus n}=\Mat(d,n)$  of $d \times n$ matrices.
Since $\kk = \overline{\kk}$ and $\mathrm{char} (\kk) \ne 2$, 
we  assume without loss of generality that  $E$ is equipped with the standard non-degenerate symmetric bilinear form
$$
 \langle \bv, \bw\rangle = v_1 w_1 + \cdots + v_d w_d.
$$
We identify 
matrices $A \in \Mat(d,n)$ and $n$-frames in $\kk^d$, i.e., ordered sets of column vectors $(\ba_1, \ldots, \ba_n)$.

Consider the coordinate ring of $  \Mat(d,n)$,
$$
S = \kk[x_{i,j} \, : \, 1 \leq i \leq d, 1 \leq j \leq n].
$$
The ideal generated by the orthogonality relations is 
$$\I(d,n) = \left( \sum_{i=1}^d x_{i,j}x_{i,k} \, : \, 1 \leq j < k \leq n\right), $$ 
the corresponding affine scheme is
$$\X(d,n) = \Spec(S/\I(d,n)) \subseteq \Mat(d,n),$$ 
and the   variety of orthogonal frames is
$$\V(d,n) = \Spec(S/\sqrt{\I(d,n)})    = \Big\{ A \in \Mat(d,n) \, : \, A^\intercal A \text{ is diagonal}\Big\}.$$

In this paper, the term ``variety''  refers to schemes over $\kk$ that are reduced and quasiprojective, but possibly reducible.

The following groups act on $\X(d,n)$ and $\V(d,n)$:
the orthogonal group $\Ort(d)$ by left multiplication,
the torus $\T(n)=(\kk^*)^n$  by scaling  columns,
the symmetric group $\Sigma_n$  by permuting  columns.
We denote by $G$ the group of automorphisms generated by $\Ort(d), \T(n)$ and $\Sigma_n$, that is,
\begin{equation}\label{EqGroupG}
G = \Ort(d) \times \big( \T(n) \rtimes \Sigma_n) \subseteq \GL(d) \times \GL(n),
\end{equation}
where the semidirect product $\T(n) \rtimes \Sigma_n$ is the group of generalized permutation (or monomial) matrices.

\section{Stratification of the  variety of orthogonal frames}\label{SectionStratification}

In this section, we investigate  the topological structure of the variety $\V(d,n)$. 
We  construct a stratification of $\V(d,n)$ into smooth equidimensional locally closed subschemes, 
and determine the dimension and  components of the strata.
We assume $\kk = \overline{\kk}$ throughout this section.

The construction of the stratification is based on the following  numerical invariants.

\begin{definition}\label{DefNumericalInvariantsLSSMatrices}
Let $A \in \V(d,n)\subseteq \Mat(d,n)$ be an orthogonal frame.
The {\bf anisotropic column space} and the {\bf isotropic column space}  of $A$ are 
\begin{align*}
\coa(A) &=  \Span \big( \text{anisotropic columns of } A\big) \subseteq \kk^d,\\
\coi(A) &=  \Span \big( \text{isotropic columns of } A\big)\subseteq \kk^d.
\end{align*}
The {\bf anisotropic rank} and the {\bf isotropic rank}  of $A$ are
$$
\rka(A) = \dim\big( \coa(A)\big), \qquad
\rki(A) = \dim\big( \coi(A)\big).
$$
\end{definition}

\begin{remark}
Since the columns of $A \in \V(d,n)$ are orthogonal, the anisotropic columns are automatically linearly independent, so $\rka(A)$ is simply the number of anisotropic columns.
By Propositions \ref{PropNonDegenerateSpaces} and \ref{PropNonDegenerateSubspaces},
the subspaces $\coa(A)$ and $\coa(A)^\perp$ are non-degenerate and  $\coa(A) \cap \coa(A)^\perp = 0$.

The subspace $\coi(A)\subseteq \kk^d$ is isotropic, since its generators are orthogonal and isotropic.
Since
 $\coa(A) \subseteq \coi(A)^\perp$,
we have $\coa(A) \cap \coi(A)=0$ and therefore
\begin{equation}
\rka(A)+\rki(A) = \rk(A) \leq n.
\end{equation}
On the other hand, $\coi(A)$ is  an isotropic subspace of $\coa(A)^\perp$,
therefore, 
 by Proposition \ref{PropMaximalDimensionIsotropic} we also have
$2\rki(A) \leq \dim \big(\coa(A)^\perp\big) = d-\rka(A)$ and, thus,
\begin{equation}
\rka(A)+2\rki(A) \leq d.
\end{equation}
\end{remark} 
 
\begin{definition}\label{DefStratumSpq}
Let $p,q \in \NN$ be such that $p+q \leq n$ and $ p+2q \leq d$.
Define the locus
\begin{equation}
\mcS_{p,q}= \big\{ A \in \V(d,n) \,\mid \, \rka(A) = p, \rki(A) = q\big\}.
\end{equation}
\end{definition}

The  variety   $\V(d,n)$ is the disjoint union of the loci $\mcS_{p,q}$.
Both  $\rka(A)$ and $\rki(A)$  are invariant under the actions of the groups $\Ort(d), \T(n), \Sigma_n$  on $\V(d,n)$, thus,
the actions restrict to each $\mcS_{p,q}$.

The goal of 
 this section is to show that $\mcS_{p,q}$ is a smooth quasiprojective variety, and to   determine its dimension and irreducible components.
In   Section \ref{SectionDegeneration}, we  study the degeneration problem for this stratification, that is, determining  when  a stratum $\mcS_{p,q}$ lies in the closure of  another  stratum $\mcS_{p',q'}$.

Each  $\mcS_{p,q}$ is typically reducible, with many isomorphic irreducible components arising from the action of $\Sigma_n$ permuting the columns.
For this reason, it is convenient to work with the following sub-loci.

\begin{definition}\label{DefStratumLpq}
Let $p,q \in \NN$ be such that $p+q \leq n$ and $ p+2q \leq d$.
Define the locus
\begin{equation}
\mcL_{p,q}= \big\{ A \in \V(d,n) \,\mid \, \rka(A) = p, \rki(A) = q, \text{ the first } p \text{ columns of } A \text{ are anisotropic}\big\}.
\end{equation}
More generally, 
for any subset $P \subseteq [n]$, define 
$\mcL_{P,q}$  to be the set of matrices $A \in \V(d,n)$ such that  $P$ is the set of indices of the anisotropic columns of $A$, and $\rki(A) = q$.
\end{definition}

Note that $\mcS_{p,q}$  is the disjoint union of the loci $\mcL_{P,q}$ as $P$ ranges over all subsets of $[n]$ with  $|P|=p$.
The actions of $\Ort(d)$ and $\T(n)$ restrict to each $\mcL_{P,q}$, whereas 
$\Sigma_n$ acts transitively on the set of loci $\{\mcL_{P,q}\,|\, P \subseteq [n],\, |P|=p\}$.
It follows  that 
 $\mcS_{p,q}$ is the disjoint union of the loci obtained from $\mcL_{p,q}$ by applying the action of
$\Sigma_n$, thus reducing  the study of $\mcS_{p,q}$ to that of $\mcL_{p,q}$.

The following theorem is the main result of this section.

\begin{thm}\label{ThmStructureLpq}
Let $p,q \in \NN$ be such that $p+q \leq n$ and $ p+2q \leq d$.
The locus  $\mcL_{p,q}$ is a smooth quasiprojective variety of dimension
\begin{equation}\label{EqDimLpq}
\dim \mcL_{p,q} = pd +qd +qn -\frac{1}{2}p^2-2qp-\frac{3}{2}q^2+\frac{1}{2}p-\frac{1}{2}q.
\end{equation}
If $p+2q < d$ or $d=p$, then $\mcL_{p,q}$ is irreducible.
If $p+2q = d$ and $q>0$, then $\mcL_{p,q}$ has two connected components, they are irreducible and isomorphic to each other.
\end{thm}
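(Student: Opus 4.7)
The plan is to realize $\mcL_{p,q}$ as a Zariski-locally trivial fiber bundle over a smooth parameter space and deduce all three claims (smoothness, dimension, components) from the corresponding properties of the base and fiber.

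First I would introduce the base $B$ parameterizing pairs $(H, K)$ where $H \subseteq \kk^d$ is a non-degenerate subspace of dimension $p$ and $K \subseteq H^\perp$ is an isotropic subspace of dimension $q$. The first projection realizes $B$ as an $\OGr(q, d-p)$-bundle over the open subvariety of $\Gr(p, d)$ parameterizing non-degenerate subspaces, using that the universal perpendicular bundle is Zariski-locally trivial. Applying the structure of $\OGr$ recalled in Section \ref{SectionPreliminaries}, $B$ is smooth and quasiprojective of dimension $p(d-p) + q(2(d-p) - 3q - 1)/2$; it is irreducible if $p + 2q < d$ or $q = 0$, and has two isomorphic irreducible components when $p + 2q = d$ and $q > 0$.

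Next I would define $\phi : \mcL_{p,q} \to B$ by $A \mapsto \bigl(\coa(A), \coi(A)\bigr)$ and analyze its fibers. A point of the fiber over $(H, K)$ consists of an ordered tuple of $p$ orthogonal anisotropic vectors in $H$ (automatically a basis of $H$) together with an ordered $(n-p)$-tuple of vectors in $K$ spanning $K$ (mutual orthogonality is automatic in an isotropic subspace). Thus the fiber splits as a product $\mcU_H \times \mcV_K$: the factor $\mcV_K$ is the open locus of surjective sequences in $K^{n-p}$, hence smooth and irreducible of dimension $q(n-p)$, while the factor $\mcU_H$ is the image of $\Ort(H) \times (\kk^*)^p$ under the action $(g, c) \mapsto (c_1 g(\be_1), \ldots, c_p g(\be_p))$ for a fixed orthonormal basis of $H$. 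Corollary \ref{CorWitt} shows this action is surjective with finite stabilizers (of order $2^p$, coming from sign choices), so $\mcU_H$ is smooth and irreducible of dimension $\binom{p}{2} + p = \binom{p+1}{2}$.

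I would then verify that $\phi$ is a Zariski-locally trivial fibration using the $\Ort(d)$-action: Theorem \ref{ThmWitt} implies that $\Ort(d)$ acts transitively on $B$, making $B$ a homogeneous space, and $\phi$ is $\Ort(d)$-equivariant, so a trivialization over an open neighborhood of any chosen basepoint propagates to all of $B$. Since both base and fiber are smooth and the fiber is irreducible, $\mcL_{p,q}$ is smooth quasiprojective with the same number of irreducible components as $B$ (the two components being permuted by a reflection, hence isomorphic), and of dimension
\[
p(d-p) + \frac{q(2(d-p) - 3q - 1)}{2} + \binom{p+1}{2} + q(n-p),
\]
which simplifies to the expression in \eqref{EqDimLpq}. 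The main obstacle I anticipate lies in the boundary case $p + 2q = d$, $q > 0$: one must confirm that the two irreducible components of $\OGr(q, d-p)$ pull back to two distinct components of $\mcL_{p,q}$ rather than merging, which reduces to showing that the connected stabilizer $\SO(d) \cap \mathrm{Stab}(H_0,K_0)$ preserves each component of $B$ individually, so that only elements of $\Ort(d) \setminus \SO(d)$ swap the two components of $\mcL_{p,q}$.
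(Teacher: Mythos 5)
Your construction is in substance the same as the paper's: the paper also fibers $\mcL_{p,q}$ over the space of pairs $(H,K)$ (realized there as the orthogonal Grassmann bundle $\OGr(q,\zeta^\perp)$ over the open locus $\mcU\subseteq\Gr(p,\kk^d)$ of non-degenerate subspaces, via the fiber product $\mcX_\ani\times_\mcU\mcX_\iso$), with fiber equal to (orthogonal anisotropic bases of $H$) times (spanning $(n-p)$-tuples in $K$), and with identical dimension bookkeeping. The first genuine gap is the glue: asserting that $\phi$ is Zariski-locally trivial because ``a trivialization near a basepoint propagates by the transitive $\Ort(d)$-action'' is circular, since equivariance only propagates a trivialization you have already exhibited, and none is produced. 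Homogeneity under an orthogonal group yields étale-local, not Zariski-local, triviality in general: trivializing the anisotropic-frame factor amounts to normalizing an orthogonal frame of the tautological bundle, i.e.\ extracting square roots of the regular functions $\langle s_i,s_i\rangle$, which is impossible on Zariski opens (already for $(p,d)=(1,2)$ the quadratic line bundle $\zeta^\perp$ over $\mcU\subseteq\PP^1$ has no local section of constant nonzero norm, because $1+t^2$ is not a square in $\kk(t)$). The paper avoids this entirely: it gets flatness of $\mcX_\ani\to\mcU$ and of $\mcX_\iso\to\OGr(q,\zeta^\perp)$ from $\Ort(d)$-equivariance over homogeneous bases, describes the anisotropic fiber as a free $(\ZZ_2)^p$-quotient (an étale image) of the Stiefel manifold times $\T(p)$, and then invokes Lemma \ref{LemmaIrreducibleComponents} (flat with irreducible fibers gives a bijection on irreducible components). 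You need a substitute of this kind before smoothness, the dimension, and the component count of $\mcL_{p,q}$ can be read off from base and fiber.

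The second gap is the one you flag and leave open, and it is not a routine verification. Your criterion is the right one: for $0<2q=d-p$, the number of components of $B$ (hence of $\mcL_{p,q}$, once the fibers of $\phi$ are known to be irreducible) is governed by whether the stabilizer of $(H_0,K_0)$ is contained in $\SO(d)$. But for $p>0$ it is not: the reflection in an anisotropic vector of $H_0$ preserves $H_0$, fixes $K_0\subseteq H_0^{\perp}$ pointwise, and has determinant $-1$, so $B=\Ort(d)/\mathrm{Stab}(H_0,K_0)$ is connected and the two components of the fiber $\OGr(q,H_0^{\perp})$ are interchanged by monodromy over $\mcU$. This is visible already for $(d,n,p,q)=(3,2,1,1)$: the locus $\{(\bv,\bw)\,:\,\langle\bv,\bv\rangle\ne 0,\ \langle\bw,\bw\rangle=0,\ \bw\ne 0,\ \langle\bv,\bw\rangle=0\}$ is a nonempty open subset of a rank-two subbundle over the punctured irreducible cone $\{\langle\bw,\bw\rangle=0,\ \bw\ne0\}$, hence irreducible, so $\mcL_{1,1}$ is connected rather than two-component. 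Thus your anticipated obstacle is fatal for $p>0$, and the two-component conclusion can only be established along your lines when $p=0$, where $\mcU$ is a point and no monodromy occurs. Be aware that the paper's own Step 2 deduces the two-component count of $\OGr(q,\zeta^\perp)$ from local trivializations over $\mcU$ alone, which likewise does not address this monodromy; the point you isolated is precisely where the argument (and, for $p>0$, the stated component count itself) is in trouble.
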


As an immediate consequence, we obtain the following result for the strata of $\V(d,n)$.

\begin{cor}\label{CorStructureSpq}
Let $p,q \in \NN$ be such that $p+q \leq n$ and $ p+2q \leq d$.
Then, $\mcS_{p,q}$ is a smooth quasiprojective variety of dimension
\begin{equation}\label{EqDimSpq}
\dim \mcS_{p,q} = pd +qd +qn -\frac{1}{2}p^2-2qp-\frac{3}{2}q^2+\frac{1}{2}p-\frac{1}{2}q.
\end{equation}
If $p+2q < d$ or $d=p$, then $\mcS_{p,q}$ has ${n \choose p}$ connected components.
If $p+2q = d$ and $q>0$, then $\mcS_{p,q}$ has $2{n \choose p}$ connected components.
All connected components  of $\mcS_{p,q}$ are irreducible and isomorphic to each other.
\end{cor}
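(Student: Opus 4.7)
The plan is to derive the corollary directly from Theorem~\ref{ThmStructureLpq} via the symmetry provided by the $\Sigma_n$-action on columns. The starting point is the set-theoretic decomposition
$$\mcS_{p,q} \;=\; \bigsqcup_{\substack{P \subseteq [n]\\ |P|=p}} \mcL_{P,q}$$
observed after Definition~\ref{DefStratumLpq}. The two things I would need to verify are that this decomposition consists of mutually isomorphic pieces, and that it is in fact a decomposition into clopen subschemes of $\mcS_{p,q}$.

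For the first point, I would show that each $\mcL_{P,q}$ is isomorphic as a scheme to $\mcL_{p,q}$: given any $\sigma \in \Sigma_n$ with $\sigma([p]) = P$, column permutation by $\sigma$ preserves $\rka$ and $\rki$ and transports the set of anisotropic column indices $[p]$ to $P$, hence restricts to an isomorphism $\mcL_{p,q}\xrightarrow{\sim}\mcL_{P,q}$. This produces ${n \choose p}$ mutually isomorphic copies.

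The second point is the only nontrivial one. I would use that anisotropy of the $j$-th column is the Zariski-open condition $\langle \bx_j, \bx_j\rangle \neq 0$ on $\V(d,n)$. On $\mcS_{p,q}$ the total number of anisotropic columns is constantly $p$, so for $A \in \mcL_{P,q}$ the $p$ columns indexed by $P$ remain anisotropic on a neighborhood of $A$ in $\mcS_{p,q}$, and by the cardinality constraint no other column can become anisotropic nearby. Hence $\mcL_{P,q}$ is open in $\mcS_{p,q}$, and since finitely many open sets partition $\mcS_{p,q}$, each $\mcL_{P,q}$ is also closed.

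Once these two facts are in place, all the assertions of the corollary follow formally. Smoothness, the dimension formula~\eqref{EqDimSpq}, and quasiprojectivity (the last inherited from the locally closed inclusion $\mcS_{p,q}\hookrightarrow \Mat(d,n)$) transfer from $\mcL_{p,q}$ to $\mcS_{p,q}$ through the clopen isomorphisms. The connected component count multiplies by ${n \choose p}$: one gets ${n \choose p}$ components when $\mcL_{p,q}$ is irreducible, and $2{n \choose p}$ in the case $p+2q=d$, $q>0$, where $\mcL_{p,q}$ has two components. Finally, all irreducible components of $\mcS_{p,q}$ are isomorphic because they correspond, under the $\Sigma_n$-action combined with the component structure inside each $\mcL_{P,q}$, to irreducible components of $\mcL_{p,q}$, which are pairwise isomorphic by Theorem~\ref{ThmStructureLpq}. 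No substantial obstacle arises; the hard work has already been absorbed into the theorem, and the corollary is an essentially formal consequence of it together with the $\Sigma_n$-symmetry.
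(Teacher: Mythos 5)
Your proof is correct and follows the same route as the paper: decompose $\mcS_{p,q}$ into the loci $\mcL_{P,q}$, identify each with $\mcL_{p,q}$ via the column permutation action, and transfer all properties from Theorem~\ref{ThmStructureLpq}. Your explicit verification that each $\mcL_{P,q}$ is clopen in $\mcS_{p,q}$ is a detail the paper leaves implicit, but it is the same argument.
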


\begin{proof}
This follows from Theorem \ref{ThmStructureLpq} since $\mcS_{p,q}$ is the disjoint union of the loci  $\mcL_{P,q}$ for $P\subseteq [n]$ with $|P|=p$,
and every such $\mcL_{P,q}$ is obtained from $\mcL_{p,q}$ by a permutation of the columns.
\end{proof}

We  begin the preparation for the proof of Theorem \ref{ThmStructureLpq}.
Let $\mathrm{Gr}(p,\kk^d)$ denote the Grassmannian of $p$-dimensional subspaces of $\kk^d$. 
We use the same symbol to denote  a point $H\in \mathrm{Gr}(p,\kk^d)$ and the corresponding subspace $H \subseteq \kk^d$.

The following proposition follows from Section \ref{SubsectionQuadraticVectorSpaces}.

\begin{prop}\label{PropLocusUOpen}
The locus 
$
\mcU = \big\{ H  \, \mid 
\, H \text{ is non-degenerate}\big\}\subseteq \mathrm{Gr}(p,\kk^d)
$
is open and  non-empty.
The orthogonal group $\Ort(d)$ acts transitively on $\mcU$.
\end{prop}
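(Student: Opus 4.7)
The plan is to handle the three assertions — non-emptiness, openness, and transitivity of $\Ort(d)$ — separately, each by a direct appeal to results already collected in Section \ref{SubsectionQuadraticVectorSpaces}.

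For non-emptiness, since $\kk = \overline{\kk}$ the space $E$ admits an orthonormal basis $\{\be_1, \ldots, \be_d\}$; the subspace $H = \Span(\be_1, \ldots, \be_p)$ has Gram matrix equal to $I_p$, so by Proposition \ref{PropNonDegenerateSpaces} we have $H \in \mcU$.

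For openness, I would exhibit $\mcU$ as the non-vanishing locus of a section of a line bundle on $\Gr(p,\kk^d)$. Over any standard affine chart of the Grassmannian, points are represented by $d \times p$ matrices $A$ (with a designated $p \times p$ submatrix equal to the identity), and the Gram matrix $A^\intercal A$ of the spanning set depends polynomially on the chart coordinates; by Proposition \ref{PropNonDegenerateSpaces}, the point is in $\mcU$ if and only if $\det(A^\intercal A) \neq 0$. Since under a change of basis $M \in \GL(p)$ this determinant scales by $(\det M)^2$, it defines a global section of $(\det \mathcal{T}^\vee)^{\otimes 2}$ (with $\mathcal{T}$ the tautological subbundle on $\Gr(p,\kk^d)$) whose non-vanishing locus is precisely $\mcU$; in particular $\mcU$ is open.

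For transitivity, given $H, H' \in \mcU$, Proposition \ref{PropNonDegenerateSpaces}(3) provides orthogonal bases $\{\ba_1, \ldots, \ba_p\}$ of $H$ and $\{\bv_1, \ldots, \bv_p\}$ of $H'$, all of whose vectors are anisotropic. Applying Corollary \ref{CorWitt} with $r = p$ and $s = 0$ — choosing $c_j$ to be a square root of $\langle \ba_j, \ba_j\rangle / \langle \bv_j, \bv_j\rangle$, which exists because $\kk = \overline{\kk}$ — yields an isometry $g \in \Ort(d)$ with $g(\ba_j) = c_j \bv_j$ for each $j$; hence $g(H) = H'$. No genuine obstacle is anticipated: each assertion follows directly from the preparatory material, and the only mildly subtle point is recognizing that the Gram determinant is a section of a line bundle rather than a function on the Grassmannian, which suffices for the openness claim.
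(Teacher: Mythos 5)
Your proof is correct and follows essentially the same route as the paper, which simply cites the preliminaries of Section \ref{SubsectionQuadraticVectorSpaces}: non-emptiness via an orthonormal basis, openness via the non-vanishing of the Gram determinant in the standard charts (your line-bundle packaging is a valid, slightly more elaborate way to phrase this), and transitivity via Witt extension, which the paper already records as a consequence of Corollary \ref{CorWitt}. In short, you have merely written out the details the paper leaves implicit, so there is nothing to correct.
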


We  recall a basic fact about morphisms with irreducible fibers.

\begin{lemma}\label{LemmaIrreducibleComponents}
Let $f:X \to Y$ be a flat morphism of finite type of  varieties over $\kk$.
If $f$ has irreducible fibers, then it induces a bijection between the sets of irreducible components of $X$ and $Y$.
\end{lemma}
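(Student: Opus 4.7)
The plan is to construct the bijection explicitly, sending each irreducible component $X_i$ of $X$ to the closure $\overline{f(X_i)} \subseteq Y$, verifying that this closure is an irreducible component of $Y$, and then checking injectivity and surjectivity. Two ingredients drive the argument. First, since $f$ is flat and of finite type between Noetherian schemes, it is an open morphism. Second, whenever $y \in Y$, the decomposition $f^{-1}(y) = \bigcup_i \bigl(X_i \cap f^{-1}(y)\bigr)$ expresses the irreducible fiber as a finite union of closed subsets, so irreducibility forces $f^{-1}(y) \subseteq X_{i_0}$ for some index $i_0$.

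For the construction, I would set $U_i = X_i \setminus \bigcup_{k \ne i} X_k$, which is open in $X$ and dense in $X_i$. Openness of $f$ implies that $f(U_i)$ is a nonempty open subset of $Y$ contained in the irreducible closed set $\overline{f(X_i)}$; since any irreducible closed subset of $Y$ containing a nonempty open subset of $Y$ must coincide with a single irreducible component, this gives a well-defined assignment $i \mapsto j(i)$ with $\overline{f(X_i)} = Y_{j(i)}$.

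For injectivity, suppose $j(i) = j(i') =: j$. Then $f(U_i) \cap f(U_{i'})$ is a nonempty open subset of the irreducible variety $Y_j$, and I would pick any $y$ in it. The fiber $f^{-1}(y)$ meets both $U_i$ and $U_{i'}$, but the second ingredient places it inside some $X_{i_0}$; since $U_k \cap X_{i_0} = \emptyset$ whenever $i_0 \ne k$, this forces $i = i_0 = i'$. For surjectivity, given a component $Y_j$, I would choose $y$ in the open dense subset $Y_j \setminus \bigcup_{k \ne j} Y_k$; the nonempty irreducible fiber $f^{-1}(y)$ lies in some $X_i$, so $y \in f(X_i) \subseteq Y_{j(i)}$, which forces $Y_{j(i)} = Y_j$.

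There is no deep obstacle here; the only point requiring care is the coordinated use of openness of $f$ and irreducibility of the fibers in the injectivity step, since neither ingredient alone suffices to separate two components of $X$ that map to the same component of $Y$.
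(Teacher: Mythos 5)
Your proof is correct. The paper does not write out an argument for this lemma at all: it simply cites the Stacks Project, where the content is exactly the chain you reconstruct (flat and locally of finite presentation implies open, plus a purely topological statement about open maps with irreducible fibers and irreducible components). Your writeup is a correct self-contained version of that chain: flatness enters only through openness of $f$, and the rest is the topological argument with the dense opens $U_i = X_i \setminus \bigcup_{k\ne i} X_k$, the observation that an irreducible fiber lies in a single component of $X$, and the fact that an irreducible closed subset of $Y$ containing a nonempty open subset of $Y$ is a component; each of these steps checks out, so in effect you prove the slightly more general statement for any open morphism with irreducible fibers. One small point worth making explicit: your surjectivity step uses that fibers are nonempty, i.e.\ that $f$ is surjective; this is implied by the statement under the usual convention that the empty scheme is not irreducible, and it holds for all the morphisms to which the paper applies the lemma, so it is a matter of stating the convention rather than a gap.
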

\begin{proof}
See for example \cite[Lemmas 5.8.15, 29.23.2, 29.25.9]{Stacks}.
\end{proof}

We are now ready to prove the main theorem of this section.

\begin{proof}[Proof of Theorem \ref{ThmStructureLpq}]
The locus 
 $\mcL_{p,q}$
 is a quasiprojective variety, since it is locally closed in 
 $\V(d,n)$:
the condition that the first $p$ columns are anisotropic is open,
the condition that the last $n-p$ columns are isotropic is closed,
and the condition that  they span a subspace of dimension $q$ is locally closed.

We consider the affine space $\Mat(d,n)$ as the product $\Mat(d,p)\times \Mat(d,n-p)$.
We are going to realize  the variety $\mcL_{p,q}\subseteq \Mat(d,n)$ as a projection of a fiber product, over the open set $\mcU \subseteq \Gr(p,\kk^d)$ of Proposition \ref{PropLocusUOpen},
of two  schemes $\mcX_\ani \subseteq \Mat(d,p)$ and $\mcX_\iso \subseteq \mcU \times  \Mat(d,n-p)$.

\underline{Step 1: construction of $\mcX_\ani$.}
First, let $\mcY = \{ A \in \Mat(d,p) \,\mid \, A^\intercal A = \mathrm{Id}\}$, where $\mathrm{Id}$ denotes the  identity matrix.
This is  the Stiefel manifold,
that is, the closed subvariety of $\Mat(d,p)$ consisting of orthonormal $p$-frames.
It is a homogeneous space, since  $\Ort(d)$ acts transitively on $\mcY$, thus, it is smooth.
It is a scheme over $\mcU$, with structure morphism  given by $A \mapsto \col(A)$, and every fiber is isomorphic to $\Ort(p)$. 
Since  $ \mcY \to \mcU$  is $\Ort(d)$-equivariant,  it is flat by \cite[Proposition 1.65 (a)]{Milne}, and therefore smooth.

Next, define
 $$
 \mcX_\ani = 
 \big\{ A \in \Mat(d,p) \,\mid \, A^\intercal A \text{ is diagonal and invertible}
 \big\},
 $$
 that is, the locally closed subset of $\Mat(d,p)$ consisting of matrices whose columns are orthogonal and anisotropic.
 As before,  $\mcX_\ani$  is a scheme over $\mcU$.

There is  a morphism of $\mcU$-schemes 
 $\mcY \times \T(p) \to \mcX_\ani$ defined by  $(A,D) \mapsto AD$,
 where  $\T(p)\subseteq \GL(p)$ is the standard torus.
This  is the quotient map by the group $(\ZZ_2)^p$, acting diagonally on $\mcY \times \T(p)$ by scaling columns by $\pm 1$.
Indeed, the map is clearly surjective, 
since any anisotropic orthogonal frame $\bb_1, \ldots, \bb_p \in \kk^d$ can be scaled to an orthonormal set, 
and  each  $\bb_i$ must be scaled by one of the two square roots of $\langle\bb_i, \bb_i\rangle$.
Since the finite group $(\ZZ_2)^p$ acts freely on the  quasiprojective variety  $\mcY \times \T(p)$,
the quotient map  is étale by \cite[Section II.7, Theorem]{Mumford},
thus, 
$\mcX_\ani$ is  smooth and of the same dimension as  $\mcY \times \T(p)$.
It also follows that every  fiber of $\mcX_\ani \to \mcU$ is isomorphic to  $\big(\Ort(p)\times \T(p)\big)/(\ZZ_2)^{p}$.
This fiber is smooth, since it is a homogeneous space, and it is irreducible, since the restriction to the irreducible  component
 $\SO(p) \times \T(p) \to \big(\Ort(p)\times \T(p)\big)/(\ZZ_2)^{p}$ is surjective.
Finally, since  $\mcX_\ani \to \mcU$  is a morphism of smooth varieties with equidimensional fibers, it is flat, and therefore smooth,  by miracle flatness 
\cite[Lemma 10.128.1]{Stacks},
of relative dimension
\begin{equation}\label{EqRelativeDimXani}
\dim \mcX_\ani-\dim \mcU =  \dim \Ort(p)+ \dim \T(p) =  {p \choose 2} +p  = {p+1 \choose 2}.
\end{equation}

\underline{Step 2: construction of $\mcX_\iso$.}
Define
$$
\mcX_{\iso} = 
\big\{ (H,B) \in \mcU \times \Mat(d,n-p) \,\mid \, B^\intercal B = 0,\, \rk(B) = q,\, \col(B) \perp H  
\big\}.
$$
This is a locally closed subvariety of $\mcU \times \Mat(d,n-p)$, and a scheme over $\mcU$ via the projection map.
Observe that $B^\intercal B = 0$ amounts to the fact that the columns of $B$ are isotropic and orthogonal.

Let $\zeta$ denote the restriction to $\mcU$ of the tautological rank $p$ subbundle over $\Gr(p,\kk^d)$,
and $\zeta^\perp$ the orthogonal complement, of rank $d-p$.
Consider the  orthogonal Grassmann bundle $\OGr(q,\zeta^\perp)$, which is a  projective scheme over $\mcU$ parameterizing isotropic subbundles of $\zeta^\perp$ of rank $q$.
The $\kk$-points of $\OGr(q,\zeta^\perp)$ are pairs $(H,K)$ where $H \in \mcU$ and $K \subseteq H^\perp$ is isotropic with $\dim K = q$.
By Corollary \ref{CorWitt}, $\Ort(d)$ acts transitively on $\OGr(q,\zeta^\perp)$,
so $\OGr(q,\zeta^\perp)$ is a homogeneous space and a smooth variety.

The structure morphism $\OGr(q,\zeta^\perp) \to \mcU$ is smooth, with  fibers isomorphic to 
$\OGr(q,\kk^{d-p})$.
We have that $\OGr(q,\zeta^\perp)$ is irreducible if  $2q < d-p$, 
while it has two irreducible components if  $0 < 2q = d-p$;
both assertions follow easily 
since $\mcU$ is covered by open sets $\mcW$
such that the restriction of  $\OGr(q,\zeta^\perp) \to \mcU$  trivializes,
i.e. it is isomorphic to $\mcW \times \OGr(q,\kk^{d-p}) \to \mcW$.
In the latter case, the two components are disconnected and isomorphic to each other, since it is a homogeneous space.

There is a morphism of $\mcU$-schemes 
$\mcX_\iso \to \OGr(q,\zeta^\perp)$ 
defined by  $(H,B) \mapsto (H,\col(B))$.
This morphism is clearly $\Ort(d)$-equivariant,
  so it is flat by  \cite[Proposition 1.65 (a)]{Milne}.
Every fiber is isomorphic to the open subvariety of full rank matrices in $\Mat(q,n-p)$:
specifically,
if $K = \col(C)$ for some $C \in \Mat(d,q)$ with $\rk(C) = q$, 
the fiber over $(H,K)$ is $\{(H,CF) \, \mid \, F \in \Mat(q,n-p), \, \rk(F) =q\}$.
This implies that the morphism is smooth, with irreducible fibers of dimension $q(n-p)$.

It  follows that  the structure morphism 
$\mcX_\iso \to \mcU$ is also smooth, of relative dimension
\begin{equation}\label{EqRelativeDimXiso}
\dim \mcX_\iso-\dim\mcU =  \dim \Mat(q,n-p)+ \dim 
\OGr(q,\kk^{d-p}) =
q(n-p) + q\frac{2(d-p)-3q-1}{2}.
\end{equation}
The fiber  over a point $H \in \mcU$ is the preimage of $\OGr(q,H^\perp)$ under  $\mcX_\iso \to \OGr(q,\zeta^\perp)$.
Since the latter morphism is flat with irreducible fibers, 
it follows by Lemma \ref{LemmaIrreducibleComponents} that the preimage of  each  component of $\OGr(q,H^\perp)$ (whether there is one or two) is irreducible.
We conclude that every fiber of $\mcX_\iso \to \mcU$
is irreducible if  $2q <d-p$, 
whereas it  has two connected irreducible components if  $0<2q=d-p$.

\underline{Step 3: conclusion.}
Finally,
consider the fiber product $\mcZ = \mcX_{\ani}\times_\mcU \mcX_{\iso}$,
which is a locally closed subvariety of   $\mcU \times \Mat(d,n)$.
By the previous two steps, 
$\mcZ$  is smooth over $\mcU$, with relative dimension 
\begin{equation}
\dim \mcZ-\dim \mcU =  {p +1\choose 2} +
q(n-p) + q\frac{2(d-p)-3q-1}{2},
\end{equation}
therefore, $\mcZ$ is smooth, of dimension 
\begin{equation}
\dim \mcZ = 
p(d-p) +  {p +1\choose 2} +
q(n-p) + q\frac{2(d-p)-3q-1}{2}.
\end{equation}
It also follows that there is a morphism 
$\mcZ\to \OGr(q, \zeta^\perp)$, such that 
every fiber is the product of a fiber of $\mcX_\ani\to \mcU$ and a fiber of  $\mcX_\iso\to\OGr(q, \zeta^\perp)$,  in particular, it is irreducible.
 By Lemma \ref{LemmaIrreducibleComponents},
$\mcZ$   is irreducible if $2q<d-p$ or $d=p$,
whereas
it has two irreducible components if  $0 < 2q = d-p$.
The two   components are isomorphic, 
since $\mcZ\to \OGr(q, \zeta^\perp)$  is $\Ort(d)$-equivariant and $\Ort(d)$ acts transitively on $\OGr(q, \zeta^\perp)$.

Finally, the projection  $\mcU \times \Mat(d,n) \to \Mat(d,n)$ restricts to an isomorphism $\mcZ \to \mcL_{p,q}$,
where the inverse map is given by letting $H\in \mcU$ be the span of the first $p$ columns of a matrix $A \in \mcL_{p,q}$.
\end{proof}

\begin{remark}\label{RemTransitiveIrreducibleComponentsSpq}
The group $G$ of \eqref{EqGroupG}  acts transitively on the set of irreducible components of $\mcS_{p,q}$, for each $p,q$.
Indeed, the symmetric group $\Sigma_n$ acts transitively on the set of loci $\mcL_{P,q} $ such that $|P| = p$.
If  $p+2q<d$ or $d=p$, these loci are precisely the irreducible components of $\mcS_{p,q}$, and the claim follows.
When  $p+2q=d$ and $q>0$, the claim follows from the additional observation 
 that  $\Ort(d)$ acts transitively on the set of irreducible components of  $\mcL_{p,q}$, 
as explained in Step 3 of the proof of Theorem \ref{ThmStructureLpq}.
\end{remark}

We conclude the section by observing that some loci $\mcL_{p,q}$ are homogeneous spaces.

\begin{cor}\label{CorLpqHomogeneous}
If $p+q=n$ or $q= 0$, 
then $\Ort(d)$ acts transitively on 
the locus $\mcL_{p,q}$.
\end{cor}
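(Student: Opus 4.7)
The plan is to deduce the corollary directly from Corollary \ref{CorWitt}. Consider two points $A,A'\in\mcL_{p,q}$ and decompose their columns as $A=(\ba_1,\ldots,\ba_p,\bb_1,\ldots,\bb_{n-p})$ and $A'=(\bv_1,\ldots,\bv_p,\bw_1,\ldots,\bw_{n-p})$, with the first $p$ columns anisotropic and the last $n-p$ isotropic in each.

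The crucial observation is that in both hypothesized cases the full $n$-tuple of columns of $A$ (and of $A'$) is linearly independent, which places us exactly in the setting required by Corollary \ref{CorWitt}. When $q=0$, the condition $\rki(A)=0$ forces all of $\bb_1,\ldots,\bb_{n-p}$ to vanish, so only the $p$ pairwise orthogonal anisotropic columns are at play, and these are automatically linearly independent. When $p+q=n$, the $n-p=q$ isotropic columns span the whole $q$-dimensional subspace $\coi(A)$ and thus form a basis of it; together with the $p$ anisotropic columns of $\coa(A)\subseteq\coi(A)^{\perp}$, they form an orthogonal linearly independent set of $n$ vectors.

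I would then apply Corollary \ref{CorWitt} with $r=p$ and $s$ equal to the number of nonzero isotropic columns (namely $s=0$ in the first case and $s=q$ in the second). After an initial normalization that matches the anisotropic norms of $A$ and $A'$ so that $\langle\ba_j,\ba_j\rangle=\langle\bv_j,\bv_j\rangle$ for every $j$---carried out by the torus factor $\T(n)$ of the ambient group, which preserves $\mcL_{p,q}$---the corollary yields an isometry $g\in\Ort(d)$ with $g(\ba_j)=\bv_j$ and $g(\bb_j)=\bw_j$ for every $j$, so $g\cdot A=A'$ as claimed.

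The main obstacle is handling the scalars $c_j$ that appear in Corollary \ref{CorWitt}: since $\Ort(d)$ preserves the bilinear form, a literal transitivity assertion forces the anisotropic norms on the two sides to be matched, which is exactly what the torus normalization achieves. Once this preliminary adjustment is in place, the remaining content is a direct invocation of the Witt extension theorem in the form of Corollary \ref{CorWitt}; no further case analysis beyond the two hypothesized regimes is needed, because the linear independence check above is precisely where the hypotheses $q=0$ and $p+q=n$ enter.
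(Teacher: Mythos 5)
Your argument follows essentially the same route as the paper's: describe $\mcL_{p,q}$ explicitly in the two regimes (when $q=0$ all columns beyond the anisotropic block are zero; when $p+q=n$ the columns form a linearly independent orthogonal $n$-tuple) and then invoke Corollary \ref{CorWitt}. Where you deviate is the torus normalization, and this is not a defect but a genuine point of care: an element of $\Ort(d)$ preserves the values $\langle \ba_j,\ba_j\rangle$, so for $p\geq 1$ the literal assertion that $\Ort(d)$ alone acts transitively on $\mcL_{p,q}$ cannot hold (for instance $(\be_1,\bz,\dots,\bz)$ and $(2\be_1,\bz,\dots,\bz)$ both lie in $\mcL_{1,0}$ but are not $\Ort(d)$-equivalent). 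What the scalars $c_j$ in Corollary \ref{CorWitt} actually yield, and what the paper uses downstream (homogeneity under $\Ort(d)\times\T(n)$, e.g.\ in the proofs of Theorems \ref{ThmPrimeExtended} and \ref{ThmUFDBody}), is transitivity of $\Ort(d)\times\T(n)$, equivalently $\Ort(d)$-transitivity once the anisotropic column norms have been matched; since $\kk=\overline{\kk}$ in this section, your square-root normalization is legitimate. So your proof is correct for this intended reading and is in fact more careful than the paper's two-line argument, which silently absorbs the scalars $c_j$. (One phrasing slip: when $q=0$ and $p<n$ the full $n$-tuple of columns is not linearly independent because of the zero columns, but as you note only the anisotropic block enters the application of Corollary \ref{CorWitt}, so nothing is lost.)
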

\begin{proof}
If $p+q=n$, 
then  $\mcL_{p,q}$ is the set of all frames $(\bv_1,\ldots,\bv_p, \bw_1, \ldots, \bw_q) \in \Mat(d,n)$ such that 
$\bv_j$ is anisotropic for all $j$,
$\bw_j$ is isotropic for all $j$,
and $\bv_1,\ldots,\bv_p, \bw_1, \ldots, \bw_q$ are linearly independent and orthogonal.
The conclusion follows from Corollary \ref{CorWitt}.
If $q = 0$,
then  $\mcL_{p,q}$ is the set of all frames $(\bv_1,\ldots,\bv_p, \bz, \ldots, \bz) \in \Mat(d,n)$ such that  the
$\bv_j$'s are orthogonal and anisotropic, and again we can apply Corollary \ref{CorWitt}.
\end{proof}

\section{Dimension of strata}\label{SectionDimension}

In this section, we analyze the behavior of the numerical function 
\begin{equation}\label{EqFunctionStrataDimension}
\sigma(p,q) = pd +qd +qn -\frac{1}{2}p^2-2qp-\frac{3}{2}q^2+\frac{1}{2}p-\frac{1}{2}q
\end{equation}
on the domain
\begin{equation}\label{EqQuadrilateral}
\Delta(d,n) = \big\{
(p,q) \in \ZZ^2 \,\, : \,\,
p \geq 0, \,\,
 q \geq 0, \,\,
p+q \leq n, \,\,
 p+2q \leq d
\big\}
\end{equation}
where  $d,n$ are fixed  integers with $d \geq 1, n \geq 2$.
By Section \ref{SectionStratification}, 
$\sigma(p,q)$ is equal to the dimension of each irreducible component of the stratum $\mcS_{p,q}$, whereas    $\Delta(d,n)$
indexes  the non-empty strata of $\V(d,n)$.
Our goal is to determine the values $(p,q) \in \Delta(d,n)$ where $\sigma(p,q)$ attains the absolute maximum.

\begin{prop}\label{PropStrictlyIncreasing}
The function $\sigma$ is strictly increasing in $\Delta(d,n)$
with respect to both variables $p$ and $q$.
\end{prop}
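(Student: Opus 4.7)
The plan is to verify strict monotonicity by directly computing the two forward differences $\sigma(p+1,q)-\sigma(p,q)$ and $\sigma(p,q+1)-\sigma(p,q)$, and then checking that each of these differences is strictly positive whenever the shifted point remains in $\Delta(d,n)$. Since $\sigma$ is a polynomial of degree $2$ in $p$ and $q$, these differences will be linear in $p$ and $q$, and the bound on their sign should follow directly from the defining inequalities $p+q\leq n$ and $p+2q\leq d$ of $\Delta(d,n)$.

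First I would expand
\[
\sigma(p+1,q)-\sigma(p,q) \;=\; d - \tfrac{1}{2}\bigl((p+1)^2-p^2\bigr)-2q+\tfrac{1}{2} \;=\; d-p-2q.
\]
If $(p+1,q)\in\Delta(d,n)$, the constraint $(p+1)+2q\leq d$ forces $d-p-2q\geq 1$, giving strict increase in $p$. Next I would expand
\[
\sigma(p,q+1)-\sigma(p,q)\;=\; d+n-\tfrac{3}{2}\bigl((q+1)^2-q^2\bigr)-2p-\tfrac{1}{2} \;=\; d+n-2p-3q-2.
\]
If $(p,q+1)\in\Delta(d,n)$, then $p+q+1\leq n$ and $p+2q+2\leq d$; summing these two inequalities yields $2p+3q+3\leq d+n$, equivalently $d+n-2p-3q-2\geq 1$. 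Hence the $q$-difference is also strictly positive, completing strict increase in $q$.

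There is no real obstacle here: the statement reduces to a one-line calculation of each partial difference followed by a single addition of the two defining inequalities of $\Delta(d,n)$. The only mild subtlety is to read ``strictly increasing'' in the correct way, namely as strictness of the forward difference whenever the neighboring lattice point also lies in $\Delta(d,n)$; this is precisely what the two computations above establish.
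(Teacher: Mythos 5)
Your proposal is correct and is essentially the paper's own argument: the paper computes the same first differences (written as backward differences $\sigma(p,q)-\sigma(p-1,q)=d-p-2q+1$ and $\sigma(p,q)-\sigma(p,q-1)=(d-p-2q)+(n-p-q)+1$) and bounds them below by $1$ using the defining inequalities of $\Delta(d,n)$, exactly as you do after a trivial reindexing. Nothing further is needed.
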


\begin{proof}
It suffices to consider the first differences
\begin{align*}
\sigma(p,q)-\sigma(p-1,q) &= 
d-p-2q+1 \geq 1\\
\sigma(p,q)-\sigma(p,q-1) &= d+n-2p-3q+1 = (d-p-2q)+(n-p-q)+1\geq 1. \qedhere
\end{align*}
\end{proof}

The following subset
\begin{equation}\label{EqUpperBoundary}
\Omega = \left\{
\Big(\min(n-q,d-2q)\,,\,q\Big) 
\, : \,  0 \leq q \leq \min\left(n,\left\lfloor\frac{d}{2}\right\rfloor\right)
\right\}
\end{equation}
is  the \emph{upper boundary}  of the domain $\Delta(d,n)$.
Indeed, $\Omega$  is precisely the set of maximal elements in $\Delta(d,n)$
with respect to the componentwise order of $\NN^2$.
This set
 consists of one or two segments on the lines  $p+q=n$ and $p+2q=d$,
which intersect at the point $(p,q) = (2n-d,d-n)$.
Specifically, define
\begin{equation}\label{EqSegment1}
\Omega_1 = 
\big\{(n-q, q) \, : \,  0 \leq q \leq \min (d-n,n)
\big\},
\end{equation}
\begin{equation}\label{EqSegment2}
\Omega_2 = \left\{
(d-2q,q) \, : \,  \max (d-n+1,0) \leq q \leq \left\lfloor\frac{d}{2}\right\rfloor
\right\},
\end{equation}
then $\Omega = \Omega_1 \sqcup \Omega_2$.
Whether only one or both segments are non-empty depends on $d$ and $n$, as follows:

\begin{equation}\label{EqTable3CasesSegments}
\begin{tabular}{|c|c|}
\hline
$2n \leq d $
&
$\Omega_1 = \big\{(n-q, q) \, : \,  0 \leq q \leq n \big\} $ 
\\
&
$\Omega_2 = \varnothing$
\\
\hline
$d =2n-1$
&
$\Omega_1 = \big\{(n-q, q) \, : \,  0 \leq q \leq n-1 \big\} $ 
\\
&
$\Omega_2 = \varnothing$
\\
\hline
$n \leq d < 2n-1$ 
&
$\Omega_1 = \big\{(n-q, q) \, : \,  0 \leq q \leq d-n\big\}$
\\
&
$\Omega_2 = \left\{(d-2q,q) \, : \,  d-n+1 \leq q \leq \left\lfloor\frac{d}{2}\right\rfloor \right\}$
\\
\hline
$ d< n$
&
$\Omega_1 = \varnothing$
\\ 
& 
$\Omega_2 = \left\{(d-2q,q) \, : \,  0 \leq q \leq \left\lfloor\frac{d}{2}\right\rfloor\right\}$
\\
\hline
\end{tabular}
\end{equation}

See Figure \ref{FigDomain}  for some illustrations.

\begin{prop}\label{PropRestrictionSegment1}
The restriction of the function $\sigma$ to the segment $\Omega_1$ is strictly decreasing in $q$.
\end{prop}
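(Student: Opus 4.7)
The plan is a direct computation that uses the first-difference formulas already derived in the proof of Proposition \ref{PropStrictlyIncreasing}, rather than expanding $\sigma(n-q,q)$ from scratch. Define
\[
f(q) \;=\; \sigma(n-q,\,q), \qquad 0 \leq q \leq \min(d-n,n).
\]
I want to show $f(q-1) - f(q) > 0$ for every $q \geq 1$ in this range. Since
\[
f(q-1) - f(q) \;=\; \sigma(n-q+1,\,q-1) - \sigma(n-q,\,q),
\]
I would telescope this difference through the intermediate point $(n-q,\,q-1) \in \Delta(d,n)$, writing
\[
f(q-1)-f(q) \;=\; \bigl[\sigma(n-q+1,q-1) - \sigma(n-q,q-1)\bigr] \;+\; \bigl[\sigma(n-q,q-1) - \sigma(n-q,q)\bigr].
\]

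Now I apply the two first-difference identities recorded in the proof of Proposition \ref{PropStrictlyIncreasing}. The first bracket, obtained from $\sigma(p,q) - \sigma(p-1,q) = d-p-2q+1$ with $(p,q)$ replaced by $(n-q+1,\,q-1)$, equals $d - (n-q+1) - 2(q-1) + 1 = d - n - q + 2$. The second bracket, obtained from $\sigma(p,q) - \sigma(p,q-1) = d+n-2p-3q+1$ with $p=n-q$ (and then negated), equals $-(d+n-2(n-q)-3q+1) = -d+n+q-1$. Adding gives
\[
f(q-1)-f(q) \;=\; (d-n-q+2) + (-d+n+q-1) \;=\; 1.
\]

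Thus $f$ decreases by exactly $1$ at each integer step along $\Omega_1$, which proves strict monotonicity. There is no real obstacle here; the only thing to check is that the intermediate point $(n-q,\,q-1)$ lies in $\Delta(d,n)$, which is immediate since $(n-q,q)\in\Delta(d,n)$ forces $n-q\geq 0$, $q-1\geq 0$ (as $q\geq 1$), $(n-q)+(q-1) = n-1 \leq n$, and $(n-q)+2(q-1) = n+q-2 \leq d$ (using $q \leq d-n$, hence $n+q-2 \leq d-2 \leq d$). So the telescoping decomposition is legitimate, and the proposition follows.
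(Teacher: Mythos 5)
Your argument is correct and amounts to the same elementary computation as the paper's: the paper substitutes $p=n-q$ into $\sigma$ and finds the restriction is linear in $q$ with slope $-1$, while you telescope via the first-difference identities and find each step along $\Omega_1$ decreases $\sigma$ by exactly $1$ — the identical conclusion. (Your check that $(n-q,q-1)\in\Delta(d,n)$ is harmless but not needed, since the difference formulas are polynomial identities valid for all $(p,q)$.)
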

\begin{proof}
Substituting $p = n-q$ in $\sigma(p,q)$ we obtain a linear polynomial in $q$, with leading coefficient $-1$
\begin{align*}
\sigma(n-q,q)& =  -\frac{1}{2}(n-q)^2-2q(n-q)-\frac{3}{2}q^2+ (n-q)d +qd +qn +\frac{1}{2}(n-q)-\frac{1}{2}q\\
& 
= -q+dn+\frac{1}{2}n-\frac{1}{2}n^2 \qedhere
\end{align*}
\end{proof}

\begin{prop}\label{PropRestrictionSegment2}
The restriction of the function $\sigma$ to the segment $\Omega_2$ is non-decreasing in $q$,
and it is strictly increasing in the interval
$\max (d-n+2,0) \leq q \leq \left\lfloor\frac{d}{2}\right\rfloor$.
\end{prop}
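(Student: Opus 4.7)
The approach is essentially the same as in the proof of Proposition \ref{PropRestrictionSegment1}: reduce to a one-variable problem by substituting the constraint defining the segment, then analyze monotonicity via first differences. The plan is to substitute $p = d-2q$ into the formula \eqref{EqFunctionStrataDimension} for $\sigma(p,q)$, obtain a quadratic polynomial $g(q) := \sigma(d-2q, q)$ in $q$ with leading coefficient $\tfrac{1}{2}$, and then examine the discrete derivative $g(q) - g(q-1)$, which is linear in $q$ and will therefore give the statement directly.

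Carrying out the substitution, the $d$-terms collect as $\tfrac{1}{2}d^2 + \tfrac{1}{2}d$, and after bookkeeping on the cross terms $-2qp$ and $-\tfrac{1}{2}p^2$ (which contribute $-2qd + 4q^2$ and $-\tfrac{1}{2}d^2 + 2dq - 2q^2$ respectively), the answer takes the form
\begin{equation*}
g(q) \;=\; \tfrac{1}{2}q^2 \;+\; \bigl(n - d - \tfrac{3}{2}\bigr)q \;+\; \tfrac{1}{2}d(d+1).
\end{equation*}
From this I compute
\begin{equation*}
g(q) - g(q-1) \;=\; q + n - d - 2.
\end{equation*}

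The two assertions then follow by inspecting the sign of $q + n - d - 2$. For non-decreasing on the whole of $\Omega_2$, the relevant range is $\max(d-n+1, 0) \leq q-1 < q \leq \lfloor d/2 \rfloor$, so $q \geq \max(d-n+2, 1)$; in both of the two cases $d \geq n-1$ and $d < n-1$, this yields $q + n - d - 2 \geq 0$. For strict increase on the subinterval $\max(d-n+2, 0) \leq q \leq \lfloor d/2 \rfloor$, I need $g(q) - g(q-1) > 0$ at successive integers starting one step past the left endpoint, i.e.\ $q \geq \max(d-n+3, 1)$, and again a short case split gives $q + n - d - 2 \geq 1 > 0$.

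I do not expect any genuine obstacle here; the whole argument is a single-variable polynomial manipulation in the spirit of Proposition \ref{PropRestrictionSegment1}. The only point requiring minor care is to handle the boundary cases correctly, namely when $d - n + 1 \leq 0$ (so the segment $\Omega_2$ starts at $q=0$ rather than at $q = d-n+1$), and to verify that at the transition value $q = d-n+2$ the first difference vanishes exactly, which explains why strict increase is claimed only starting from this value and not one step earlier.
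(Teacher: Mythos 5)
Your proposal is correct and follows essentially the same route as the paper: substituting $p=d-2q$ gives the same quadratic $\tfrac{1}{2}q^2+(n-d-\tfrac{3}{2})q+\tfrac{d^2+d}{2}$, and your first-difference computation $g(q)-g(q-1)=q+n-d-2$ (vanishing exactly at $q=d-n+2$) is just a discrete rephrasing of the paper's observation that the parabola has its minimum at $q=d-n+\tfrac{3}{2}$, so it takes equal values at $q=d-n+1$ and $q=d-n+2$ and increases strictly afterwards.
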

\begin{proof}
Substituting $p = d-2q$ in $\sigma(p,q)$ we obtain 
\begin{align*}
\sigma(d-2q,q)& = -\frac{1}{2}(d-2q)^2-2q(d-2q)-\frac{3}{2}q^2+ (d-2q)d +qd +qn +\frac{1}{2}(d-2q)-\frac{1}{2}q\\
&= \frac{1}{2}q^2 + \left(n-d-\frac{3}{2}\right) q +\frac{d^2+d}{2}.
\end{align*}
This is a quadratic polynomial in $q$, with positive leading coefficient and minimum attained at  $
q = d-n + \frac{3}{2}.
$
Thus, it attains the same value at $q= d-n+1$ and $q= d-n+2$, and it is strictly increasing afterwards.
Since the segment $\Omega_2$ begins at $q= d-n+1$ or later, the claim follows.
\end{proof}

Consider the integer points 
\begin{equation}\label{EqEndpoints}
P_1 = \Big(\min(d,\,n)\,,\,0\Big) \qquad
\text{and}
\qquad
P_2 =
 \left(
 d -2\left\lfloor\frac{d}{2} \right\rfloor
 \,,\, 
 \left\lfloor\frac{d}{2} \right\rfloor
 \right).
\end{equation}
Observe that $P_1$ is the initial endpoint of the upper boundary $\Omega$, while $P_2$ is the final endpoint of $\Omega$  only if $d \leq 2n$.
Observe also that $P_1 \ne P_2$ unless $d = 1$.

\begin{prop}\label{PropMaximum}
The  maximum of  $\sigma(p,q)$ in $\Delta(d,n)$ is attained  at one or both of the points $P_1, P_2$, and nowhere else.
More precisely, we have the following cases:

\begin{equation*}
\begin{tabular}{|C{4cm}|C{4cm}|C{4cm}|}
\hline
\multicolumn{3}{|c|}{d \emph{ even}}
\\
\hline
$d > 2n+1 - \sqrt{8n+1}$
&
\emph{maximum } 
$= nd - {n \choose 2}$
&
\emph{ attained only at } $P_1$
\\
\hline
$d = 2n+1 - \sqrt{8n+1}$
&
\emph{maximum } 
$= nd - {n \choose 2}$
&
\emph{attained at both } $P_1, \, P_2$
\\
\hline
$d < 2n+1 - \sqrt{8n+1}$
&
\emph{maximum} 
$> nd - {n \choose 2}$
&
\emph{ attained only at } $P_2$
\\
\hline
\end{tabular}
\end{equation*}

\vspace*{4mm}

\begin{equation*}
\begin{tabular}{|C{4cm}|C{4cm}|C{4cm}|}
\hline
\multicolumn{3}{|c|}{d \emph{ odd}}
\\
\hline
$d > 2n - \sqrt{8n-7}$
&
\emph{maximum } 
$= nd - {n \choose 2}$
&
\emph{attained only at } $P_1$
\\
\hline
$d = 2n - \sqrt{8n-7}$
&
\emph{maximum } 
$= nd - {n \choose 2}$
&
\emph{ attained at both } $P_1, \, P_2$
\\
\hline
$d < 2n - \sqrt{8n-7}$
&
\emph{maximum } 
$> nd - {n \choose 2}$
&
\emph{attained only at } $P_2$
\\
\hline
\end{tabular}
\end{equation*}
\vspace*{1mm}
\end{prop}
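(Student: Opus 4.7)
The plan is to combine the three preceding propositions to pin down the maximum on the two boundary segments, then reduce everything to comparing $\sigma(P_1)$ with $\sigma(P_2)$, and finally solve the resulting quadratic inequality in $d$ to read off the thresholds.

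\textbf{Reduction to $P_1$ and $P_2$.} Since $\sigma$ is strictly increasing in both variables by Proposition \ref{PropStrictlyIncreasing}, the maximum on $\Delta(d,n)$ is attained only among the componentwise-maximal elements, i.e.\ on the upper boundary $\Omega=\Omega_1\sqcup\Omega_2$. By Proposition \ref{PropRestrictionSegment1}, $\sigma|_{\Omega_1}$ is strictly decreasing in $q$, so its maximum is attained only at the initial endpoint of $\Omega_1$, which (consulting \eqref{EqTable3CasesSegments}) equals $P_1=(n,0)$ whenever $\Omega_1\neq\varnothing$, i.e.\ when $d\geq n$. By Proposition \ref{PropRestrictionSegment2}, $\sigma|_{\Omega_2}$ is non-decreasing and strictly increasing past the starting index, so its maximum is attained only at the final endpoint $P_2$. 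When $d\geq 2n$ one has $\Omega_2=\varnothing$ and the max is at $P_1$ alone. When $d<n$ one has $\Omega_1=\varnothing$, and in this regime the minimum $q=d-n+\tfrac32$ of the quadratic from the proof of Proposition \ref{PropRestrictionSegment2} is negative, so $\sigma|_{\Omega_2}$ is strictly increasing throughout its domain; hence $P_1=(d,0)\in\Omega_2$ is dominated and the maximum is at $P_2$ alone. This shows, in every case, that the maximum on $\Delta(d,n)$ is attained at $P_1$, at $P_2$, or at both, and nowhere else.

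\textbf{Computation of the two values.} A direct substitution gives $\sigma(P_1)=nd-\binom{n}{2}$ when $d\geq n$ (the case where $P_1$ can possibly be the maximum), and $\sigma(P_1)=\binom{d+1}{2}$ when $d<n$. For $\sigma(P_2)$, I would use the closed form $\sigma(d-2q,q)=\tfrac12 q^2+(n-d-\tfrac32)q+\tfrac{d^2+d}{2}$ derived in the proof of Proposition \ref{PropRestrictionSegment2}, evaluated at $q=\tfrac{d}{2}$ if $d$ is even and at $q=\tfrac{d-1}{2}$ if $d$ is odd.

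\textbf{Solving the quadratic comparison.} The difference $\sigma(P_1)-\sigma(P_2)$ (with $\sigma(P_1)=nd-\binom{n}{2}$) simplifies to a quadratic in $d$:
\[
8\bigl(\sigma(P_1)-\sigma(P_2)\bigr)=\begin{cases}-d^2+(4n+2)d-4n^2+4n & d\text{ even},\\[2pt] -d^2+4nd-4n^2+8n-7 & d\text{ odd}.\end{cases}
\]
In both cases the leading coefficient is negative, so the expression is positive strictly between the two real roots. The quadratic formula yields roots $d=(2n+1)\pm\sqrt{8n+1}$ in the even case and $d=2n\pm\sqrt{8n-7}$ in the odd case; the larger root exceeds $2n$ in each case, so on the range $d\leq 2n$ (the only one where $P_2$ is even a candidate) the sign of the difference is governed solely by comparison with the smaller root. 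This yields exactly the three sub-cases of each table, with the equality case on the boundary $d=2n+1-\sqrt{8n+1}$ (resp.\ $d=2n-\sqrt{8n-7}$) giving $\sigma(P_1)=\sigma(P_2)$.

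\textbf{Cleanup in the small-$d$ regime.} To close the argument, I would verify that whenever $d<n$, the condition "$d$ less than the threshold" automatically holds, so the claim "maximum at $P_2$ only, with value $>nd-\binom{n}{2}$" from the last row of each table is consistent. Concretely, $d<n$ gives $2n+1-\sqrt{8n+1}-d>n+1-\sqrt{8n+1}$, which is positive once $n\geq 4$; the handful of remaining small pairs $(d,n)$ are checked by hand. The only real obstacle is bookkeeping the several parity/size cases cleanly; the underlying algebra is a single quadratic formula.
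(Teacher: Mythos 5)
Your overall route coincides with the paper's (restrict to $\Omega$ by Proposition \ref{PropStrictlyIncreasing}, use Propositions \ref{PropRestrictionSegment1} and \ref{PropRestrictionSegment2} on the two segments, then compare $nd-{n \choose 2}$ with $\sigma(P_2)$ via the explicit quadratics), and your quadratic comparison in the third paragraph is correct. However, there is a genuine gap in your reduction step. Proposition \ref{PropRestrictionSegment2} only gives that $\sigma|_{\Omega_2}$ is non-decreasing at its very first step and strictly increasing from $q=\max(d-n+2,0)$ onward; it does not yield your assertion that ``its maximum is attained only at the final endpoint $P_2$.'' Whenever $\Omega_2$ starts at $q=d-n+1$ and $\lfloor d/2\rfloor=d-n+2$ (for instance $d=2n-4$ even with $n\geq 4$, or $d=2n-5$ odd with $n\geq 5$), the first point of $\Omega_2$ ties $P_2$: e.g.\ for $(d,n)=(4,4)$ one has $\sigma(2,1)=\sigma(0,2)=9$, while $\sigma(P_1)=10$. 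So the ``nowhere else'' part of the proposition does not follow from what you wrote; the paper closes exactly this hole by comparing the last point of $\Omega_1$ with the first point of $\Omega_2$ (both lie on the line $p+2q=d$, so the quadratic gives a strict inequality), which shows the tie point never achieves the global maximum when $d\geq n$.

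The same oversight breaks your treatment of $d<n$ precisely in the equality cases. The vertex $q=d-n+\tfrac32$ is not negative when $d=n-1$ (it equals $\tfrac12$), so $\sigma|_{\Omega_2}$ is not strictly increasing throughout its domain there; for $(d,n)=(2,3)$ and $(3,4)$ the maximum is attained at both $P_1$ and $P_2$ (these are the middle rows of the tables), not ``at $P_2$ alone,'' and for $d=1$ one has $P_1=P_2$. Your cleanup also asserts that $n+1-\sqrt{8n+1}>0$ once $n\geq 4$, which is false (positivity needs $n\geq 7$; the non-strict inequality $n\leq 2n+1-\sqrt{8n+1}$ needs $n\geq 6$), so the ``handful of small pairs'' you defer to a hand check is larger than suggested and, more importantly, contains exactly the pairs $(1,2),(2,3),(3,4)$ where your first paragraph produces the wrong conclusion. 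What is missing, relative to the paper's proof, is the tie analysis on $\Omega_2$ and the explicit identification of these boundary cases; the algebraic comparison of $nd-{n \choose 2}$ with $\sigma(P_2)$ is fine as you have it.
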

\begin{proof}
By Proposition \ref{PropStrictlyIncreasing}, the  maximum of $\sigma(p,q)$ in $\Delta(d,n)$ is attained only in the upper boundary $\Omega= \Omega_1\sqcup \Omega_2$.
By Proposition \ref{PropRestrictionSegment1}, if $\Omega_1 \ne \varnothing$, 
then $P_1 = (n,0)$,
the  maximum of $\sigma(p,q)$ in $\Omega_1$  is attained precisely at $P_1$, and it is equal to $\sigma(n,0) = nd - {n \choose 2}$.
By Proposition \ref{PropRestrictionSegment2}, if $\Omega_2 \ne \varnothing$, 
the  maximum of $\sigma(p,q)$ in $\Omega_2$  is attained precisely at $P_2 $ in all cases, except when  $\Omega_2$ consists of the two elements with  $q= d-n+1,d-n+2$;
in this case, $\sigma(p,q)$ attains the same value at both elements.

We are going to  verify the statements of the proposition in each case  of \eqref{EqTable3CasesSegments}.

First, suppose  $2n-1 \leq d$.
Then, $\Omega_2 = \varnothing$ and
 the global maximum of $\sigma(p,q)$ is attained only at $P_1$.
This implies the conclusions of the proposition: if $d$ is even, then $d \geq 2n-1 > 2n+1 -\sqrt{8n+1}$,
 whereas if 
$d$ is odd, then
 $d \geq 2n-1>2n-\sqrt{8n-7}$,
since $n \geq 2$ by assumption.

Next, suppose   $d<n$, then $\Omega_1 = \varnothing$.
The segment $\Omega_2$ consists of one element if and only if $d = 1$.
It consists of the two elements with  $q= d-n+1,d-n+2$ if and only if $d-n+1 = 0$ and $d-n+2 = \lfloor \frac{d}{2}\rfloor$,
that is, if and only if $ (d,n) = (2,3), (3,4)$.
If $ (d,n) \ne (2,3), (3,4)$,
 the global maximum of $\sigma(p,q) $ is attained only at $P_2 $.
If $ (d,n) = (2,3), (3,4)$,
then $\Omega_2=\{P_1,P_2\}$, with $P_1\ne P_2$,
and $\sigma(p,q)$ 
attains the global maximum at both points.

The inequality
$$
2\left(\sigma(P_1) - \left(nd - {n \choose 2}\right)\right)
= d^2+d-2dn+n^2-n 
=
\left(n-d-\frac{1}{2}\right)^2- \frac{1}{4}\geq 0
$$
implies that $ \sigma(P_1) \geq dn-{n \choose 2}$, 
and, moreover, that the inequality is strict when $n>d+1$.
Since $\sigma(P_2)\geq \sigma(P_1)$ by Proposition \ref{PropRestrictionSegment2}, 
we conclude that 
$\sigma(P_2) > dn-{n \choose 2}$
 if $n > d+1$ or $\sigma(P_2) > \sigma(P_1)$.
 By the previous paragraph, we have $\sigma(P_2) = \sigma(P_1)$ if and only if 
$d=1$ or 
 $(d,n) = (2,3), (3,4)$.
In conclusion,
 we have 
$\sigma(P_2) > nd - {n \choose 2}$ 
when $(d,n) \ne (1,2), (2,3), (3,4)$,
whereas
$\sigma(P_2) = nd - {n \choose 2}$ holds
when $(d,n) = (1,2), (2,3), (3,4)$.

Assume $d$ is even.
If $n \geq 6$, then $d<n \leq 2n+1-\sqrt{8n+1}$ holds, and the conclusion follows.
For each $n = 2, \ldots, 5$, a direct calculation shows that 
$d <  2n+1-\sqrt{8n+1}$ holds for all even integers $d$ such that $d <n$,
except when $(d,n) = (2,3)$, in this case we have  $d =  2n+1-\sqrt{8n+1}$.

We proceed similarly for odd $d$.
If $n \geq 7$, then $d<n \leq 2n-\sqrt{8n-7}$ holds, and the conclusion follows.
For  $n = 2, \ldots, 6$, we have
$d <  2n-\sqrt{8n-7}$  for all odd integers $d$ such that $d <n$,
except when $(d,n) = (1,2), (3,4)$,
 in these cases we have  $d =  2n+1-\sqrt{8n+1}$.

Thus, in both cases $d$ even or odd, 
the conclusions of the proposition are always verified.

Finally, suppose $n \leq d < 2n-1$,
thus,
 $\Omega_1, \Omega_2 \ne \varnothing$.
The last element of $\Omega_1$ and the first element of $\Omega_2$, ordered with respect to $q$-coordinates, satisfy
$$
\sigma\big(n-(d-n),d-n\big)> \sigma\big(d-2(d-n+1),d-n+1\big)
$$
by the same argument as in  Proposition \ref{PropRestrictionSegment2}, since both points 
 belong to the line $p+2q=d$.
 It follows that the maximum of $\sigma(p,q)$ can only be attained at one or 
both of $P_1,P_2$.

We have $\sigma(P_1) =  nd - {n \choose 2}$.
If $d$ is even, then 
$
\sigma(P_2) = \sigma\left(0\,,\, \frac{d}{2}\right)   = \frac{d(d+4n-2)}{8}.
$
We have
\begin{align*}
 \frac{d(d+4n-2)}{8} < dn-{n \choose 2} & \Leftrightarrow   d^2+(-4n-2)d+4n^2-4n < 0
\\
& \Leftrightarrow  2n+1 - \sqrt{8n+1} < d < 2n+1 + \sqrt{8n+1}.
\end{align*}
Since  $d<2n-1$ by assumption, this is equivalent to 
$d >  2n+1 - \sqrt{8n+1}$.
The same equivalences hold replacing $<$ with $=$.
Thus,  the conclusions of the proposition are  verified.

Similarly, if $d$ is odd, then  $\sigma(P_2) = \sigma\left(1\,,\,\frac{d-1}{2}\right)  = \frac{d^2+4dn-4n+7}{8}$, and we have
\begin{align*}
 \frac{d^2+4dn-4n+7}{8} < dn-{n \choose 2}
  & 
  \Leftrightarrow d^2-4dn +4n^2-8n+7 < 0
  \\
    & 
  \Leftrightarrow 
2n - \sqrt{8n-7} \leq d \leq 2n + \sqrt{8n-7}.
\end{align*}
Since $2n> d$, this is equivalent to 
$
d >2n - \sqrt{8n-7},
$
and the conclusion follows.
\end{proof}

\begin{figure}[h]
	\centering
	\begin{subfigure}[b]{0.32\textwidth}
		\centering
		\begin{tikzpicture}[scale=0.65]
			\coordinate (1) at (0,0);
			\coordinate (2) at (4,0);
			\coordinate (3) at (0,4);
			\draw[fill=gray!15!white,thick,draw=black] (1)--(2)--(3)--cycle;
			
			\draw[->] (-0.2,0) -- (4.7,0) node[right] {\footnotesize$p$};
			\draw[->] (0,-0.2) -- (0,4.5) node[above] {\footnotesize$q$};
			
			\foreach \x in {0,0.5,1,...,4} 
			\pgfmathsetmacro\z{int(4-\x)}
			\foreach \y in {0,0.5,...,\z} 
			\filldraw[gray!60!white] (\x,\y) circle (0.7pt);
			
			\filldraw[white] (4,0.5) circle (1pt);
			
			\foreach \i in {0,0.5,...,4}
			\pgfmathsetmacro\z{int(100-2*(\i))}
			\filldraw[black] (4-\i,\i) circle (1.5pt) node[above right] {\tiny $\z$};
		\end{tikzpicture}
\centering				\caption{ $d=16, n=8$ \\ $\Omega_1 \neq \varnothing, \, \Omega_2 = \varnothing$}
	\end{subfigure}
	\begin{subfigure}[b]{0.32\textwidth}
		\centering
		\begin{tikzpicture}[scale=0.52]
			\coordinate (1) at (0,0);
			\coordinate (2) at (8,0);
			\coordinate (3) at (0,5);
			\coordinate (4) at (6,2);
			\draw[fill=gray!15!white,thick,draw=black] (1)--(2)--(4)--(3)--cycle;
			
			\draw[->] (-0.2,0) -- (8.7,0) node[right] {\scriptsize $p$};
			\draw[->] (0,-0.2) -- (0,5.5) node[above] {\scriptsize $q$};
			
			\foreach \y in {0,0.5,...,4.5} 
			\pgfmathsetmacro\z{int(6-\y)}
			\foreach \x in {0,0.5,...,\z} 
			\filldraw[gray!60!white] (\x,\y) circle (0.7pt);
			
			\foreach \y in {0.5,1,...,4.5} 
			\pgfmathsetmacro\z{int(9-2*\y)}
			\foreach \x in {0,0.5,1,...,\z} 
			\filldraw[gray!60!white] (\x,\y) circle (0.7pt);
			
			\filldraw[white] (8,0.5) circle (1pt);
			
			\filldraw[gray!60!white] (6.5,0) circle (0.7pt);
			\filldraw[gray!60!white] (7,0) circle (0.7pt);
			\filldraw[gray!60!white] (7.5,0) circle (0.7pt);
			
			\filldraw[gray!60!white] (5.5,2) circle (0.7pt);
			\filldraw[gray!60!white] (4.5,2.5) circle (0.7pt);
			\filldraw[gray!60!white] (3.5,3) circle (0.7pt);
			\filldraw[gray!60!white] (2.5,3.5) circle (0.7pt);
			
			\filldraw[black] (8,0) circle (1.5pt) node[above right] {\tiny $200$};
			\filldraw[black] (7.5,0.5) circle (1.5pt) node[above right] {\tiny $199$};			
			\filldraw[black] (7,1) circle (1.5pt) node[above right] {\tiny $198$};
			\filldraw[black] (6.5,1.5) circle (1.5pt) node[above right] {\tiny $197$};
			\filldraw[black] (6,2) circle (1.5pt) node[above right] {\tiny $196$};
			\filldraw[black] (5,2.5) circle (1.5pt) node[above right] {\tiny $195$};
			\filldraw[black] (4,3) circle (1.5pt) node[above right] {\tiny $195$};
			\filldraw[black] (3,3.5) circle (1.5pt) node[above right] {\tiny $196$};
			\filldraw[black] (2,4) circle (1.5pt) node[above right] {\tiny $198$};
			\filldraw[black] (1,4.5) circle (1.5pt) node[above right] {\tiny $201$};
			\filldraw[black] (0,5) circle (1.5pt) node[above right] {\tiny $205$};
			
		\end{tikzpicture}
\centering		\caption{ $d=20, n=16$ \\ $\Omega_1, \Omega_2 \neq \varnothing$}
	\end{subfigure}
	\begin{subfigure}[b]{0.32\textwidth}
		\centering
		\begin{tikzpicture}[scale=0.65]
			\coordinate (1) at (0,0);
			\coordinate (2) at (6,0);
			\coordinate (3) at (0,3);
			\draw[fill=gray!15!white,thick,draw=black] (1)--(2)--(3)--cycle;
			
			\draw[->] (-0.2,0) -- (6.7,0) node[right] {\footnotesize$p$};
			\draw[->] (0,-0.2) -- (0,3.5) node[above] {\footnotesize$q$};
			
			\foreach \y in {0,0.5,...,3} 
			\pgfmathsetmacro\z{int(6-2*\y)}
			\foreach \x in {0,0.5,...,\z} 
			\filldraw[gray!60!white] (\x,\y) circle (0.7pt);
			
			\filldraw[white] (0.5,3) circle (1pt);
			
			\foreach \i in {0,0.5,...,3}
			\pgfmathsetmacro\z{int(78+(2)*(\i^2)+(5)*\i}
			\filldraw[black] (6-2*\i,\i) circle (1.5pt) node[above right] {\tiny $\z$};
		\end{tikzpicture}
\centering		\caption{ $d=12, n=16$ \\ $\Omega_1=\varnothing, \, \Omega_2 \neq \varnothing$}
	\end{subfigure}
	\caption{The domain $\Delta(d,n)$ for various values of $(d,n)$. Each integer point $(p,q)$ on the upper boundary $\Omega$ is labeled with the dimension $\sigma(p,q)$.} \label{FigDomain}
\end{figure}
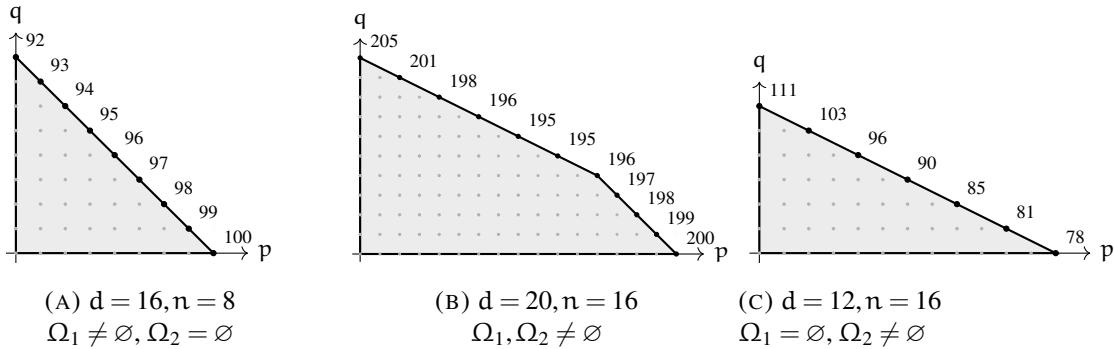

\section{Degeneration problem and irreducible components}\label{SectionDegeneration}

In this section, 
we   consider the degeneration problem  associated with the  stratification of $\V(d,n)$ introduced in Definition \ref{DefStratumSpq}.
We assume $\kk = \overline{\kk}$ in this section.

\begin{definition}
We endow the set 
\begin{equation*}
\Delta(d,n) = \big\{
(p,q) \in \ZZ^2 \,\, : \,\,
p \geq 0, \,\,
 q \geq 0, \,\,
p+q \leq n, \,\,
 p+2q \leq d
\big\}
\end{equation*}
 with the partial order defined by 
\begin{equation}\label{EqPartialOrder}
(p,q) \preceq (p',q')\,\, \Leftrightarrow \,\, \mcS_{p,q} \text{ lies  in the closure}\footnote{
Unless specified otherwise,
the closure operation always refers to the closure of a subset in $\Mat(d,n)$.
}
\text{ of  } \mcS_{p',q'}.
\end{equation}
We write $(p,q) \prec (p',q')$ if $(p,q) \preceq (p',q')$ and $(p,q) \ne (p',q')$.
\end{definition}

Understanding the poset structure of $\Delta(d,n)$ amounts to understanding the behavior of the pair 
   $(\rka(A),\rki(A))$ under  deformations of matrices $A\in \V(d,n)$.
   
\begin{remark}\label{RemIrreducibleComponentsMaximal}
By Remark \ref{RemTransitiveIrreducibleComponentsSpq}, 
if one irreducible component of $\mcS_{p,q}$ lies in the closure of $ \mcS_{p',q'}$, 
then the whole stratum $\mcS_{p,q}$ lies in the closure of $ \mcS_{p',q'}$.
Thus, an equivalent definition of the partial order is
\begin{equation*}
(p,q) \preceq (p',q')\,\, \Leftrightarrow \,\, \mcL_{p,q} \text{ lies  in the closure of  } \mcS_{p',q'}.
\end{equation*}
It also follows that 
the irreducible components of    $\V(d,n)$ are exactly the closures of the irreducible components of the strata $\mcS_{p,q}$ with $(p,q) \in \Delta(d,n)$ maximal.
\end{remark}

We begin with some  necessary conditions.

\begin{prop}\label{PropNecessaryConditionsPoset}
Assume that $(p,q) \prec (p',q')$. Then,
\begin{enumerate}
\item $\sigma(p,q) < \sigma(p',q')$,
\item $p \leq p'$,
\item $p+q\leq p'+q'$.
\end{enumerate}
\end{prop}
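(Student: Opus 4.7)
The three claims all follow from standard semicontinuity/closure properties, so I would simply assemble the right closed conditions and invoke the structure theorem of Section \ref{SectionStratification}. Assume $(p,q) \prec (p',q')$, so that $\mcS_{p,q} \subseteq \overline{\mcS_{p',q'}}$ and $(p,q) \neq (p',q')$.

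For (1), the plan is to use that $\overline{\mcS_{p',q'}}$ is equidimensional of dimension $\sigma(p',q')$: by Corollary \ref{CorStructureSpq}, every irreducible component of $\mcS_{p',q'}$ has dimension $\sigma(p',q')$, and the closure is the union of the closures of these components. Since distinct strata are disjoint and $\mcS_{p,q}\neq \mcS_{p',q'}$, we have $\mcS_{p,q} \subseteq \overline{\mcS_{p',q'}}\setminus \mcS_{p',q'}$. This complement is a closed proper subset of $\overline{\mcS_{p',q'}}$, hence of strictly smaller dimension. Since $\dim \mcS_{p,q} = \sigma(p,q)$ again by Corollary \ref{CorStructureSpq}, this yields $\sigma(p,q)<\sigma(p',q')$.

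For (2), the key observation is that the condition ``$A$ has at most $p'$ anisotropic columns'' is Zariski closed in $\Mat(d,n)$: it can be written as
\[
\{A \in \Mat(d,n) \,:\, \rka(A) \leq p'\} = \bigcup_{\substack{T \subseteq [n] \\ |T|=n-p'}} \{A \,:\, \langle \bx_j,\bx_j\rangle = 0 \text{ for all } j \in T\},
\]
a finite union of closed subsets. Since $\mcS_{p',q'}$ is contained in this closed set, so is its closure, and thus so is $\mcS_{p,q}$, giving $p=\rka(A)\leq p'$ for any $A \in \mcS_{p,q}$.

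For (3), I would use the identity $\rk(A) = \rka(A) + \rki(A)$ valid on $\V(d,n)$ (recorded in the remark following Definition \ref{DefNumericalInvariantsLSSMatrices}, since $\coa(A) \cap \coi(A) = 0$). Thus $p+q = \rk(A)$ on $\mcS_{p,q}$ and $p'+q' = \rk(A')$ on $\mcS_{p',q'}$. The locus $\{A : \rk(A) \leq p'+q'\}$ is closed (being cut out by the $(p'+q'+1)$-minors of the generic matrix), so it contains $\overline{\mcS_{p',q'}} \supseteq \mcS_{p,q}$, yielding $p+q \leq p'+q'$.

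There is no real obstacle here: the three assertions are direct consequences of, respectively, the equidimensionality of the strata and two standard closedness statements (vanishing of quadratic forms $\langle \bx_j,\bx_j\rangle$, and vanishing of maximal minors). The interesting work on the degeneration problem lies in the sufficient conditions that follow, not in these necessary conditions.
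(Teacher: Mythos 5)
Your arguments for (2) and (3) are exactly the paper's: the paper phrases them as lower semicontinuity of $\rka$ and $\rk$, which is the same as your closedness of the loci $\{\rka\le p'\}$ (a finite union of sets where $n-p'$ of the quadrics $\langle \bx_j,\bx_j\rangle$ vanish) and $\{\rk\le p'+q'\}$ (vanishing of minors), together with the identity $\rka+\rki=\rk$ on $\V(d,n)$. These parts are fine.

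In (1) there is a small but real lacuna. You write that $\overline{\mcS_{p',q'}}\setminus \mcS_{p',q'}$ is ``a closed proper subset of $\overline{\mcS_{p',q'}}$, hence of strictly smaller dimension.'' For a reducible equidimensional closed set this implication is false in general: a proper closed subset can contain an entire irreducible component and thus have the same dimension, and by Corollary \ref{CorStructureSpq} the stratum $\mcS_{p',q'}$ typically has many components (${n\choose p'}$ or $2{n \choose p'}$ of them). The statement is salvaged by one extra observation: every irreducible component of $\overline{\mcS_{p',q'}}$ is the closure of an irreducible component of $\mcS_{p',q'}$ and therefore meets $\mcS_{p',q'}$; hence the boundary $\overline{\mcS_{p',q'}}\setminus \mcS_{p',q'}$ contains no component of the closure, its intersection with each (irreducible) component is a proper closed subset of it, and so $\dim\bigl(\overline{\mcS_{p',q'}}\setminus \mcS_{p',q'}\bigr)<\sigma(p',q')$. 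With this line added, your argument for (1) is correct, and it is a mild variant of the paper's: the paper instead argues that no component of $\mcS_{p,q}$ can be dense in a component of $\mcS_{p',q'}$, using the transitive $G$-action on components (Remark \ref{RemTransitiveIrreducibleComponentsSpq}) to upgrade ``one component dense'' to $\overline{\mcS_{p,q}}=\overline{\mcS_{p',q'}}$ and then contradicting disjointness of strata. Your route avoids the group action entirely, at the cost of the boundary-component observation above; either way the key input is equidimensionality of the strata plus their disjointness.
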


\begin{proof}
By assumption, we have the inclusion $\mcS_{p,q}\subseteq \overline{\mcS_{p',q'}} \subseteq \Mat(d,n)$.
To prove (1), it suffices to observe that no irreducible component of  $\mcS_{p,q}$ is dense in any irreducible component of $\mcS_{p',q'}$,
therefore  $\dim \mcS_{p,q}<\dim{\mcS_{p',q'}}$.
Indeed, if this were false,  by Remark \ref{RemTransitiveIrreducibleComponentsSpq}
we would have $\overline{\mcS_{p,q}}= \overline{\mcS_{p',q'}}$
and therefore ${\mcS_{p,q}}\cap \mcS_{p',q'}\ne \varnothing$, contradiction.

Statement 
(2) follows from the lower-semicontinuity of $\rka(A)$:
if the columns of $A$ indexed by a  subset $P\subseteq[n]$ are anisotropic, 
then the same is true for any $A'$ in an open subset of $\Mat(d,n)$ containing $A$, hence, 
for some $A' \in\mcS_{p',q'}$.
Similarly,
 (3) follows  from the lower-semicontinuity of $\rk(A)$.
\end{proof}

\begin{remark}\label{RemPrincipalComponent}
By Proposition \ref{PropNecessaryConditionsPoset} (2), the element $P_1 = \big(\min(d, n),\,0\big)\in \Delta(d,n)$ is always maximal.
\end{remark}

This observation motivates the following definition.

\begin{definition}\label{DefPrincipalComponent}
Suppose $d \geq n$.
The closure $\overline{\mcS_{n,0}}$ is called 
the  {\bf principal component} of $\V(d,n)$.
\end{definition}

This locus is indeed irreducible by Theorem \ref{CorStructureSpq},
and it is a component of $\V(d,n)$ by Remark \ref{RemIrreducibleComponentsMaximal}.
This is the most natural irreducible component of  $\V(d,n)$, 
since it parameterizes \emph{linearly independent} orthogonal $n$-frames.
Moreover, its codimension is equal to the number of equations of $\V(d,n)$, that is, 
\begin{equation}\label{EqCodimPrincipal}
\dim\big( \overline{\mcS_{n,0}} \big)= \sigma(n,0) = nd - {n \choose 2} = \dim \big(\Mat(d,n)\big) - \mu \big(\I(d,n)\big),
\end{equation}
where $\mu$ denotes the number of minimal generators of an ideal.

We now describe some sufficient conditions for comparability in $\Delta(d,n)$.
We show that the partial order $\preceq$ refines the componentwise order in $\NN^2$.

\begin{prop}\label{PropDeformP}
If $(p,q), (p+1,q) \in \Delta(d,n)$, then 
 $(p,q) \preceq (p+1,q)$.
\end{prop}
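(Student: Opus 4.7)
The strategy is to exhibit an explicit one-parameter family of orthogonal frames in $\mcS_{p+1,q}$ whose limit at $t=0$ is a given arbitrary frame in $\mcL_{p,q}$. By Remark \ref{RemIrreducibleComponentsMaximal}, it suffices to show that $\mcL_{p,q} \subseteq \overline{\mcS_{p+1,q}}$. The hypothesis $(p+1,q) \in \Delta(d,n)$ packages exactly the two inequalities needed: $n-p \geq q+1$ (the number of isotropic columns strictly exceeds the isotropic rank) and $p+2q < d$ (which will produce an anisotropic direction orthogonal to everything relevant).

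Given $A \in \mcL_{p,q}$ with anisotropic columns $\ba_1, \ldots, \ba_p$ and isotropic columns $\bb_1, \ldots, \bb_{n-p}$, the strict inequality $n-p > q = \dim \coi(A)$ forces a linear dependence among the $\bb_j$. Since permuting the last $n-p$ columns preserves $\mcL_{p,q}$, I would arrange that $\bb_{n-p} \in \Span(\bb_1,\ldots,\bb_{n-p-1})$. Setting $U = \coa(A)$ and working inside the non-degenerate space $U^\perp$ (of dimension $d-p$), the perpendicular of the isotropic subspace $\coi(A)$ is the subspace $W = \coi(A)^\perp \cap U^\perp$ of dimension $d-p-q$, properly containing $\coi(A)$ since $d-p-q > q$. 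A short argument shows that $W$ cannot itself be isotropic: otherwise $W \subseteq W^\perp = \coi(A)$ inside $U^\perp$, contradicting the dimension count. Hence $W$ contains an anisotropic vector $\mathbf{u}$.

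The deformation $A(t)$ agrees with $A$ in all columns except the last, which becomes $\bb_{n-p} + t\mathbf{u}$. Since both $\bb_{n-p}$ and $\mathbf{u}$ lie in $W \subseteq \coi(A)^\perp \cap U^\perp$, the new column is orthogonal to every other column of $A(t)$, so $A(t) \in \V(d,n)$. For $t \neq 0$ the self-pairing of the new column equals $t^2\langle \mathbf{u}, \mathbf{u}\rangle \neq 0$, so the anisotropic rank jumps to $p+1$; meanwhile the remaining isotropic columns $\bb_1,\ldots,\bb_{n-p-1}$ still span $\coi(A)$, keeping the isotropic rank equal to $q$. Thus $A(t) \in \mcS_{p+1,q}$ for $t \neq 0$ while $A(0) = A$, yielding the required degeneration. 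The only substantive geometric step is the production of the anisotropic $\mathbf{u}$, and this is precisely where the strict inequality $p+2q < d$ drawn from $(p+1,q) \in \Delta(d,n)$ is used; the other inequality $n-p > q$ is what makes the last isotropic column expendable.
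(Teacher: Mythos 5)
Your proof is correct and follows essentially the same route as the paper: both exploit the forced linear relation among the isotropic columns (from $p+1+q\leq n$) and perturb an expendable isotropic column by $t$ times an anisotropic vector of $\col(A)^\perp$, whose existence comes from $p+2q<d$. The only cosmetic difference is that you establish non-isotropy of $W=\col(A)^\perp\cap\coa(A)^\perp$ via the double-perp identity in the non-degenerate space $\coa(A)^\perp$, whereas the paper compares $\dim W=d-p-q$ with the maximal dimension $\lfloor (d-p)/2\rfloor$ of an isotropic subspace; both are valid.
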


\begin{proof}
Let $A \in \mcS_{p,q}$. 
We are going to realize  $A$ as a limit of matrices in $\mcS_{p+1,q}$.

Since $\rk(A) = p+q<p+1+q\leq n$ by assumption,
the columns of $A$ are not linearly independent.
Specifically, there must be a linear relation among the isotropic columns of $A$, 
since all anisotropic columns are linearly independent and $\coa(A) \cap \coi(A) = 0$,
cf. Section \ref{SectionStratification}.

The subspace $\coa(A)^\perp \subseteq \kk^d$ is  non-degenerate by Proposition \ref{PropNonDegenerateSubspaces},
thus, by Proposition \ref{PropMaximalDimensionIsotropic},
a maximal isotropic subspace of $\coa(A)^\perp $ has dimension equal to
$$
\left\lfloor\frac{\dim \big(\coa(A)^\perp\big)}{2}\right\rfloor= \left\lfloor\frac{d-p}{2}\right\rfloor.
$$
The subspace $\col(A)^\perp \subseteq \coa(A)^\perp$ has dimension equal to $d-p-q$.
Since $p+2q<p+1+2q \leq d$ by assumption, 
we obtain
$2(d-p-q)> d-p$ and therefore $d-p-q > \left\lfloor\frac{d-p}{2}\right\rfloor$.
It follows that $\col(A)^\perp$ is not an isotropic subspace, that is, 
there exists an anisotropic vector $\bw \in \col(A)^\perp$.

Let $\bv$ be an isotropic column of $A$ that is a linear combination of the remaining isotropic columns, and let $A_\varepsilon$ be the matrix obtained replacing $\bv $ with $\bv + \varepsilon \bw$, with $\varepsilon \in \kk$.
Then, we have $A_\varepsilon \in \mcS_{p+1,q}$ for all $\varepsilon\ne 0$, and $\lim_{\varepsilon\to 0} A_\varepsilon = A$.
\end{proof}

\begin{prop}\label{PropDeformQ}
If $(p,q), (p,q+1)\in \Delta(d,n)$, then
 $(p,q) \preceq (p,q+1)$.
\end{prop}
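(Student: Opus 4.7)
The plan is to mirror the argument of Proposition \ref{PropDeformP}, but now increasing the isotropic rank rather than the anisotropic one. Starting from $A \in \mcS_{p,q}$, I will construct a one-parameter family $A_\varepsilon \in \mcS_{p,q+1}$ with $A_0 = A$, obtained by perturbing a single isotropic column of $A$ by a small isotropic vector.

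The first step is to locate a suitable perturbation direction $\bw$. Since $(p,q+1) \in \Delta(d,n)$ gives $p + 2(q+1) \leq d$, the orthogonal complement $\coa(A)^\perp$ is a non-degenerate quadratic subspace of dimension $d - p \geq 2(q+1)$. By Proposition \ref{PropMaximalDimensionIsotropic}, the $q$-dimensional isotropic subspace $\coi(A) \subseteq \coa(A)^\perp$ is properly contained in some $(q+1)$-dimensional isotropic subspace of $\coa(A)^\perp$. I would choose any $\bw$ in this larger subspace but outside $\coi(A)$: by construction $\bw$ is isotropic, orthogonal to $\coi(A)$ (hence to every isotropic column of $A$), and orthogonal to $\coa(A)$ (hence to every anisotropic column of $A$).

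The second step identifies the column to perturb. The inequality $p + q + 1 \leq n$ gives $n - p \geq q + 1 > q = \dim \coi(A)$, so the isotropic columns of $A$ are linearly dependent; after reordering, some isotropic column $\bv_1$ lies in the span of the remaining isotropic columns $\bv_2, \ldots, \bv_{n-p}$. I would then define $A_\varepsilon$ by replacing $\bv_1$ with $\bv_1 + \varepsilon \bw$, leaving all other columns unchanged.

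Finally, I verify that $A_\varepsilon \in \mcS_{p,q+1}$ for $\varepsilon \neq 0$. The three properties of $\bw$ immediately give pairwise orthogonality of the columns of $A_\varepsilon$ together with isotropy of $\bv_1 + \varepsilon \bw$; the $p$ anisotropic columns are untouched, so $\rka(A_\varepsilon) = p$; and since $\bv_1 \in \Span(\bv_2, \ldots, \bv_{n-p})$, the new isotropic column span equals $\coi(A) + \kk \bw$, of dimension $q+1$. Clearly $\lim_{\varepsilon\to 0} A_\varepsilon = A$. The only substantive point is the construction of $\bw$, and both hypotheses $(p,q+1) \in \Delta(d,n)$ are used: $p + 2(q+1) \leq d$ supplies the room for a strictly larger isotropic subspace, while $p + q + 1 \leq n$ provides the redundant isotropic column that absorbs the perturbation. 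I do not anticipate any real obstacle.
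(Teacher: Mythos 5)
Your proof is correct and follows essentially the same route as the paper: both locate an isotropic vector $\bw \in \coa(A)^\perp$ orthogonal to $\coi(A)$ but outside it (using $p+2(q+1)\leq d$ and Proposition \ref{PropMaximalDimensionIsotropic}), exploit $p+q+1\leq n$ to find a redundant isotropic column, and perturb it by $\varepsilon\bw$ to land in $\mcS_{p,q+1}$. No gaps.
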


\begin{proof}
We proceed similarly to Proposition \ref{PropDeformP}, showing that every  $A \in \mcS_{p,q}$ is a limit of matrices in $\mcS_{p,q+1}$.
By assumption, $\rka(A) = p, \rki(A) = q$, and $p+2q = p+(2q+1)-2 \leq d-2$.
The subspace $\coa(A)^\perp \subseteq \kk^d$  is non-degenerate,
and a maximal isotropic subspace of $\coa(A)^\perp$ has dimension 
$$
\left\lfloor \frac{d-\rka(A)}{2} \right\rfloor \geq 
\left\lfloor \frac{2 \rki(A)+2}{2} \right\rfloor 
>
\rki(A),
$$
thus, the isotropic subspace $\coi(A) \subseteq \coa(A)^\perp$ is not maximal.
It follows that there exists   $\bw \in \coa(A)^\perp \setminus \coi(A)$ such that $\coi(A) + \Span(\bw)$ is an isotropic subspace.
As before, there is an isotropic column  $\bv$ of $A$ that is a linear combination of the remaining isotropic columns.
Letting $A_\varepsilon$ be the matrix obtained replacing $\bv $ with $\bv + \varepsilon \bw$, 
 we have $A_\varepsilon \in \mcS_{p,q+1}$ for all $\varepsilon\ne 0$, and $\lim_{\varepsilon\to 0} A_\varepsilon = A$.
\end{proof}

The main result of this section is the following.

\begin{thm}\label{ThmIrreducibleDecomposition}
The irreducible components of $\V(d,n)$ are the closures of the irreducible components of the strata $\mcS_{p,q}$ such that $(p,q) \in \Omega$ and $\sigma(p,q)\geq nd - {n \choose 2}$.
\end{thm}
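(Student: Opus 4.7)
The plan is to prove that the $\preceq$-maximal elements of $\Delta(d,n)$ coincide with the set $M=\{(p,q)\in\Omega\,:\,\sigma(p,q)\geq nd-{n\choose 2}\}$; the theorem then follows at once from Remark~\ref{RemIrreducibleComponentsMaximal} together with Corollary~\ref{CorStructureSpq}.

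For the inclusion $\{\preceq\text{-maximal}\}\subseteq M$, iterated application of Propositions~\ref{PropDeformP} and~\ref{PropDeformQ} shows that $(p,q)\leq_{\NN^2}(p',q')$ implies $(p,q)\preceq(p',q')$, so every $\preceq$-maximal point is also $\leq_{\NN^2}$-maximal and therefore lies in $\Omega$. Moreover, $\I(d,n)$ is generated by ${n\choose 2}$ elements, so Krull's height theorem gives that every irreducible component of $\X(d,n)$, and hence of $\V(d,n)=\X(d,n)_{\mathrm{red}}$, has dimension at least $nd-{n\choose 2}$; combined with Remark~\ref{RemIrreducibleComponentsMaximal} and Theorem~\ref{ThmStructureLpq}, this forces $\sigma(p,q)\geq nd-{n\choose 2}$ for every $\preceq$-maximal $(p,q)$.

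The reverse inclusion $M\subseteq\{\preceq\text{-maximal}\}$ is the main step. Suppose $(p,q)\in M$ and $(p,q)\prec(p',q')$. Using Propositions~\ref{PropDeformP} and~\ref{PropDeformQ} together with the transitivity of $\preceq$, I may replace $(p',q')$ by a $\leq_{\NN^2}$-upper bound of it in $\Omega$, and thus assume $(p',q')\in\Omega$. Proposition~\ref{PropNecessaryConditionsPoset}(1) then yields $\sigma(p',q')>\sigma(p,q)\geq nd-{n\choose 2}$. A four-way case analysis, according to which of $\Omega_1,\Omega_2$ contains each point, completes the proof. If both lie on $\Omega_1$, Proposition~\ref{PropRestrictionSegment1} forces $(p,q)=P_1$ (the unique point of $\Omega_1$ attaining $\sigma=nd-{n\choose 2}$), so no $(p',q')\in\Omega_1$ has strictly larger $\sigma$. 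If both lie on $\Omega_2$, Proposition~\ref{PropNecessaryConditionsPoset}(2) gives $q\geq q'$, which combined with Proposition~\ref{PropRestrictionSegment2} contradicts $\sigma(p,q)<\sigma(p',q')$. In the mixed case $(p,q)\in\Omega_1$, $(p',q')\in\Omega_2$, Proposition~\ref{PropNecessaryConditionsPoset}(2) combined with the defining $q$-ranges $q\leq d-n$ and $q'\geq d-n+1$ yields an arithmetic contradiction. In the mixed case $(p,q)\in\Omega_2$, $(p',q')\in\Omega_1$, Proposition~\ref{PropRestrictionSegment1} gives $\sigma(p',q')\leq\sigma(P_1)=nd-{n\choose 2}\leq\sigma(p,q)$, again contradicting $\sigma(p',q')>\sigma(p,q)$.

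The main obstacle is this final case analysis, in particular the bookkeeping in the degenerate regimes of the table~\eqref{EqTable3CasesSegments} where one of $\Omega_1,\Omega_2$ is empty (for instance when $d\geq 2n-1$ or when $d<n$); however, in each such regime at most one segment contributes and several subcases vanish, so the argument only simplifies.
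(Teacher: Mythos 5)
Your proposal is correct and follows essentially the same route as the paper: reduce via Remark~\ref{RemIrreducibleComponentsMaximal} to identifying the $\preceq$-maximal elements of $\Delta(d,n)$, use Propositions~\ref{PropDeformP} and~\ref{PropDeformQ} plus Krull's height theorem for the inclusion of maximal elements into $M$, and use Proposition~\ref{PropNecessaryConditionsPoset} together with the monotonicity results \ref{PropRestrictionSegment1} and \ref{PropRestrictionSegment2} for the converse. The only difference is organizational: you run the converse as an explicit four-way case analysis on which of $\Omega_1,\Omega_2$ contains the two points, whereas the paper argues that the candidates on $\Omega_2$ are pairwise incomparable and not dominated by the rest of $\Omega$ via the $\sigma$-bound, and handles $P_1$ separately; the underlying ingredients and conclusions are the same.
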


\begin{proof}
By Remark \ref{RemIrreducibleComponentsMaximal}, 
the irreducible components of $\V(d,n)$ are the closures of the irreducible components of the strata $\mcS_{p,q}$ such that $(p,q)$ is maximal in $\Delta(d,n)$ with respect to $\preceq$.
By Propositions \ref{PropDeformP} and \ref{PropDeformQ},
such $(p,q)$ is also maximal in $\Delta(d,n)$ with respect to the componentwise order of $\NN^2$, thus,
we must have $(p,q) \in \Omega$.
By Krull's height theorem, every minimal prime of $\I(d,n)$ has height at most $\mu(\I(d,n))= {n \choose 2}$, that is,
every irreducible component of $\V(d,n)$ has dimension at least $dn-{n\choose 2}$.

For the converse, suppose first that 
 $d \geq n$.
Then, $\Omega_1 \ne \varnothing$ and $P_1 = (n,0)$, cf. \eqref{EqTable3CasesSegments} and \eqref{EqEndpoints}.
The point $P_1 \in \Delta(d,n)$ is maximal, as observed above, 
and it satisfies $P_1 \in \Omega_1$ and  $\sigma(P_1) = dn-{n\choose 2}$.
Any other $(p,q) \in \Omega_1$ has $\sigma(p,q) < dn-{n\choose 2}$ by Proposition \ref{PropRestrictionSegment1}.

It remains to consider the elements
$(p,q) \in \Omega_2$ such that $\sigma(p,q)\geq nd - {n \choose 2}$.
Ordering them  as in \eqref{EqSegment2},
they
have non-decreasing values of $\sigma(p,q)$ by Proposition \ref{PropRestrictionSegment2}, and strictly decreasing values of  $p$.
It follows by Proposition \ref{PropNecessaryConditionsPoset} that they are all incomparable.
Moreover, they are not bounded by any of the remaining  $(p',q')\in\Omega$,
by Proposition \ref{PropNecessaryConditionsPoset},
 since they satisfy $\sigma(p',q')\leq dn-{n \choose 2} \leq \sigma(p,q)$.
 Thus, the maximal elements of $\Delta(d,n)$ are precisely 
the elements $(p,q) \in \Omega$ such that $\sigma(p,q)\geq nd - {n \choose 2}$.

If $d<n$, then $\Omega = \Omega_2$, and the  argument of the previous paragraph yields the desired conclusion.
\end{proof}

\begin{remark}\label{RemLocationMaximals}
Using Proposition \ref{PropRestrictionSegment1} and \ref{PropRestrictionSegment2}
and the fact that 
$\sigma(n,0) = nd - {n \choose 2}$, 
we see that condition $\sigma(p,q)\geq nd - {n \choose 2}$ holds  in at most two regions of the boundary $\Omega$:
\begin{itemize}
\item if $\Omega_1 \ne \varnothing$, then it holds for
the initial endpoint $(n,0) $ and no other point of $\Omega_1$;
\item in $\Omega_2$, it  holds in a (possibly empty) final segment of  the parametrization \eqref{EqSegment2} of
$\Omega_2$.
\end{itemize}
Compare, for instance, with the three examples of Figure \ref{FigDomain};
the values of $\sigma(n,0)$ are  $\sigma(8,0) = 100$ in (A), $\sigma(16,0)=200$ in (B), $\sigma(16,0)=72$ in (C).
\end{remark}

As a byproduct, we obtain the following dichotomy.

\begin{cor}\label{CorDichotomyOmega}
If $(p,q) \in \Omega$, then either $(p,q) \preceq (n,0) \in \Delta(d,n)$ or $(p,q)$ is maximal with respect to $\preceq$.
\end{cor}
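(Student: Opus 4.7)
The plan is a purely combinatorial argument on the finite poset $(\Delta(d,n), \preceq)$, using only the characterization of maximal elements from Theorem \ref{ThmIrreducibleDecomposition} and Remark \ref{RemLocationMaximals}, the necessary conditions of Proposition \ref{PropNecessaryConditionsPoset}, and the monotonicity of $\sigma$ on $\Omega_1$ and $\Omega_2$ from Propositions \ref{PropRestrictionSegment1} and \ref{PropRestrictionSegment2}. The case $d < n$ is immediate: then $(n, 0) \notin \Delta(d, n)$, so the first alternative of the corollary is vacuously false, and the last paragraph of the proof of Theorem \ref{ThmIrreducibleDecomposition} already shows that in this situation $\sigma(p, q) \geq nd - \binom{n}{2}$ on all of $\Omega = \Omega_2$, so every element of $\Omega$ is maximal and the second alternative holds.

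Assume now $d \geq n$, and let $(p, q) \in \Omega$ be non-maximal. Since $\Delta(d, n)$ is finite and $\preceq$ is a partial order, I can iterate and find a maximal element $(p^*, q^*)$ with $(p, q) \preceq (p^*, q^*)$; my strategy is to show that the necessary conditions force $(p^*, q^*) = (n, 0)$, which will prove the corollary. By Proposition \ref{PropNecessaryConditionsPoset} we have $p \leq p^*$ and $p + q \leq p^* + q^*$, while Theorem \ref{ThmIrreducibleDecomposition} places $(p^*, q^*)$ in $\Omega$ with $\sigma(p^*, q^*) \geq nd - \binom{n}{2}$. If $(p, q) \in \Omega_1$, so $p + q = n$, the chain of inequalities $n = p + q \leq p^* + q^* \leq n$ forces $(p^*, q^*) \in \Omega_1$, and by Proposition \ref{PropRestrictionSegment1} the unique maximal element of $\Omega_1$ is $(n, 0)$, so $(p^*, q^*) = (n, 0)$.

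If instead $(p, q) \in \Omega_2 \setminus \Omega_1$, I need to exclude the possibility $(p^*, q^*) \in \Omega_2$. Both points would then satisfy $p + 2q = d = p^* + 2q^*$, and combined with $p \leq p^*$ this gives $q^* \leq q$. However, by Proposition \ref{PropRestrictionSegment2} the function $\sigma$ is non-decreasing on $\Omega_2$ as a function of $q$, so the strict inequality $\sigma(p, q) < nd - \binom{n}{2} \leq \sigma(p^*, q^*)$ forces $q < q^*$, a contradiction. Hence $(p^*, q^*) \in \Omega_1$, and as in the previous case $(p^*, q^*) = (n, 0)$. The main subtlety of the argument, and the only step that genuinely required thought, is recognizing that no explicit geometric deformation from $\mcS_{p, q}$ into $\overline{\mcS_{n, 0}}$ is needed at all: the combinatorial constraints on maximal elements, together with the tension between ``$\sigma$ on $\Omega_2$ grows with $q$'' and ``$p$-comparability in $\Omega_2$ forces $q^* \leq q$'', suffice to rule out every candidate maximal element other than $(n, 0)$.
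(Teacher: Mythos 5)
Your proof is correct and follows essentially the same route as the paper's: pick a $\preceq$-maximal element dominating a non-maximal $(p,q)\in\Omega$, then use Proposition \ref{PropNecessaryConditionsPoset} together with the characterization and location of maximal elements (Theorem \ref{ThmIrreducibleDecomposition}, Remark \ref{RemLocationMaximals}) and the monotonicity of $\sigma$ on $\Omega_1$ and $\Omega_2$ to force that element to be $(n,0)$. Your explicit case split on $(p,q)\in\Omega_1$ versus $(p,q)\in\Omega_2$ (and the separate treatment of $d<n$) merely spells out what the paper's terse appeal to Remark \ref{RemLocationMaximals} leaves implicit, so there is no substantive difference.
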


\begin{proof}
Suppose $(p,q)$ is not maximal in $\Delta(d,n)$ with respect to $\preceq$.
Let $(p',q') \in \Delta(d,n)$ be  maximal 
 with respect to $\preceq$ such that $(p,q) \prec (p',q')$.
 In particular, we have $(p',q') \in \Omega$.
 By Proposition \ref{PropNecessaryConditionsPoset}, it follows that $p \leq p'$.
 However, since $(p,q)\in \Omega$, it is maximal with respect to the componentwise order, therefore, we must have $q > q'$ and $p < p'$.
By Remark \ref{RemLocationMaximals}, it follows that $(p',q')=(n,0)$.
\end{proof}

\section{Smoothness}\label{SectionSmoothness}

The goal of this  section is to prove that the scheme $\X(d,n)$ of orthogonal frames possesses an abundance of smooth points.
We assume $\kk = \overline{\kk}$ in this section.

Recall the definition of the upper boundary $\Omega$ \eqref{EqUpperBoundary} of the domain $\Delta(d,n)$ \eqref{EqQuadrilateral}.
The set $\Omega$ consists of the pairs $(p,q) \in \Delta(d,n)$ that are maximal with respect to the the componentwise order in $\NN^2$.
By Propositions \ref{PropDeformP} and \ref{PropDeformQ}, 
$\Omega$ contains all the pairs $(p,q) \in \Delta(d,n)$ that are maximal
with respect to the order $\preceq$ \eqref{EqPartialOrder},
but, in general, it may contain many more pairs.

The main result of this section is the following:

\begin{thm}\label{ThmSmoothPointEveryStratum}
For every $(p,q) \in \Omega$, the locus $\mcL_{p,q}$ contains a point that is smooth on   $\X(d,n)$.
\end{thm}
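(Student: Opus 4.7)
The plan is to produce, for each $(p,q) \in \Omega$, an explicit (or generic) $A \in \mcL_{p,q}$ at which the Jacobian matrix $J(A)$ of the $\binom{n}{2}$ quadric generators of $\I(d,n)$ has exactly the right rank for smoothness. By Theorem \ref{ThmStructureLpq} together with Theorem \ref{ThmIrreducibleDecomposition}, for a generic such $A$ the local dimension $\dim_A \X(d,n)$ equals either $\sigma(p,q)$ (when $(p,q)$ is $\preceq$-maximal) or $dn - \binom{n}{2}$ (when $(p,q) \preceq (n,0)$; this is the only alternative by Corollary \ref{CorDichotomyOmega}). So my goal is to exhibit $A$ with $\rk J(A) = dn - \dim_A \X(d,n)$.

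The first step is to use the $\Ort(d)$-action to normalize $A$ so that its first $p$ columns are $e_1,\ldots,e_p$; the remaining $n-p$ isotropic columns then lie in $\Span(e_{p+1},\ldots,e_d)$. With this form, a direct block analysis shows that $J(A)$ decouples neatly: the rows indexed by $(j,k)$ with $j<k\leq p$ have support in the "top-left" block of $\Mat(d,n)$ and contribute rank $\binom{p}{2}$ via their leading $1$'s; the rows indexed by $j \leq p < k$ have a leading $1$ at position $(j,k)$ in the "top-right" block together with entries in the "bottom-left" block, contributing rank $p(n-p)$; and the rows indexed by $p<j<k$ sit entirely in the "bottom-right" block, giving the Jacobian $J_{\mathrm{iso}}(A')$ of the orthogonality equations on the isotropic submatrix $A' = (a_{p+1},\ldots,a_n)$ viewed in $\mcL_{0,q} \subseteq \V(d-p, n-p)$. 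Hence
\[
\rk J(A) = \binom{p}{2} + p(n-p) + \rk J_{\mathrm{iso}}(A'),
\]
which reduces the whole problem to the case $p = 0$.

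Set $n' := n-p$. A short calculation identifies the left kernel of $J_{\mathrm{iso}}(A')$ with the space of symmetric $n' \times n'$ matrices with zero diagonal whose columns lie in $R := \ker A' \subseteq \kk^{n'}$. For $(p,q) \in \Omega_1$, the equality $p+q=n$ forces $n' = q$ and $A'$ has full column rank, so $R = 0$, the left kernel vanishes, $\rk J_{\mathrm{iso}}(A') = \binom{q}{2}$, and $\rk J(A) = \binom{n}{2}$. Since this matches the codimension of the principal component $dn - \binom{n}{2}$ (and of $\overline{\mcL_{p,q}}$ when $p = n$), $A$ is a smooth point of the complete intersection locus. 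For $(p,q) \in \Omega_2$, the columns of $A'$ span a maximal isotropic subspace of $\kk^{2q}$, and as $A'$ varies within $\mcL_{0,q}$ the subspace $R$ sweeps out an arbitrary $(n'-q)$-dimensional subspace of $\kk^{n'}$.

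To handle the $\Omega_2$ case I would invoke apolarity in $T = \kk[z_1,\ldots,z_{n'}]$: the space in question identifies with $(I+J)^{\perp}_2$, where $I = (R^\perp)$ is generated by the $q$-dimensional subspace $R^\perp \subseteq T_1$ and $J = (z_1^2,\ldots,z_{n'}^2)$. For generic $A'$, the ideal $I$ is a generic ideal of $q$ linear forms, so $T/I$ is a polynomial ring in $n'-q$ variables, giving $\dim (I^{\perp})_2 = \binom{n'-q+1}{2}$. The crux is to show that for generic $R$ the summands $I_2$ and $J_2$ are in general position inside $T_2$, yielding
\[
\dim (I+J)^{\perp}_2 = \max\!\Big(0,\; \binom{n'-q+1}{2} - n'\Big);
\]
this is the "Hilbert function of general intersection of ideals" mentioned in the section introduction, and will be the main obstacle. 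I would attack it by showing that as $R$ varies, the $n'$ diagonal evaluations $f \mapsto f_{ii}$ restrict to generically independent linear forms on $(I^\perp)_2$. Once this is established, a routine arithmetic check using the explicit formulas for $\sigma$ on $\Omega_2$ (Propositions \ref{PropRestrictionSegment1}-\ref{PropRestrictionSegment2}) confirms that $dn - \rk J(A)$ matches $\dim_A \X(d,n)$ in both subcases—when $\overline{\mcL_{p,q}}$ is itself an irreducible component, or when $\mcL_{p,q}$ sits inside the principal component—completing the proof.
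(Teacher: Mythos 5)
Your overall architecture is the same as the paper's: bound the local codimension at points of $\mcL_{p,q}$ using the stratification and Krull's theorem, reduce to $p=0$ by splitting the Jacobian into blocks after normalizing the anisotropic columns (your one-shot block decomposition $\rk J(A)={p\choose 2}+p(n-p)+\rk J_{\mathrm{iso}}(A')$ is exactly the content of the paper's inductive Proposition \ref{PropReductionToP0}, and it is correct), dispose of $\Omega_1$ by full rank, and translate the $\Omega_2$ case into a statement about degree-two components of an ideal generated by a generic space of linear forms together with the squares of the variables. Your left-kernel/apolarity description (symmetric zero-diagonal matrices $\Lambda$ with $A'\Lambda=0$, identified with $(I+J)_2^{\perp}$) is simply the dual of the paper's bookkeeping, which computes the row space of the half-Jacobian as $[L]_2$ inside $\kk[Z_1,\dots,Z_n]/(Z_1^2,\dots,Z_n^2)$; both lead to the identical crux, and your arithmetic matching of $dn-\rk J(A)$ with the dimension bound is indeed routine.

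The genuine gap is that the crux is never proved. The statement you defer as ``the main obstacle''---that for generic $R$ one has $\dim(I+J)_2=\min\left({n'+1\choose 2},\,\dim I_2+n'\right)$, equivalently \eqref{EqJRankH}, i.e.\ that the squares $h_1^2,\dots,h_{n'}^2$ of general linear forms in $r=n'-q$ variables span a subspace of dimension $\min\left({r+1\choose 2},\,n'\right)$---is precisely the point on which the whole argument turns, and your proposed attack (``show that the $n'$ diagonal evaluations restrict to generically independent linear forms on $(I^{\perp})_2$'') is a restatement of that claim rather than a reduction to anything established: those functionals are independent on $(I^{\perp})_2$ exactly when the images of the $z_i^2$ are independent modulo $I$, which is what you need to prove. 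The paper closes this step with a short argument you are missing: since $\mathrm{char}(\kk)\ne 2$, the squaring map $h\mapsto h^2$ is the second Veronese embedding composed with an invertible linear change of coordinates, and the Veronese takes points in general (quadratic) position to points in general linear position, which gives the dimension count at once. Without this (or some equivalent genericity argument, e.g.\ an explicit choice of forms whose squares realize the bound), your proof of the $\Omega_2$ case, and hence of the theorem, is incomplete.
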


In order to prove Theorem \ref{ThmSmoothPointEveryStratum},
we  establish a somewhat unexpected connection with the topic of generic Hilbert functions.
More precisely, 
we show that a crucial calculation required in the proof is equivalent 
to computing the Hilbert function  
  of certain  general intersection of ideals, in the sense of  \cite{CavigliaMurai}.
Typically,
in
problems about generic Hilbert functions,
 it is easy to determine a natural guess but very hard to prove that it is correct (and occasionally it is wrong).
Our case is still tractable, and we are able to deduce the desired statement
 from a classical genericity property of the Veronese embedding.
 
Theorem \ref{ThmSmoothPointEveryStratum}
 is equivalent to the statement  that the 
dimension of the tangent space to  $\X(d, n)$ at some point of $\mcL_{p,q}$ is  (less than or) equal to the local dimension of $\X(d, n)$ at that point.
We begin by estimating the latter.

\begin{prop}\label{PropDimensionAtStratumLpq}
Let $(p,q) \in \Omega$.
The dimension of $\X(d,n)$ at any point of $\mcL_{p,q}$
is at least
$$
\max \left( dn- {n \choose 2},\, \sigma(p,q)\right).
$$
\end{prop}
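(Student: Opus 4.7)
The proposition asserts a lower bound on the local dimension of $\X(d,n)$ at any point $A \in \mcL_{p,q}$, and this lower bound is the maximum of two quantities. The natural plan is to establish each of the two bounds separately, and each comes from a different source: a dimension bound on the ambient scheme $\X(d,n)$ (which does not depend on $(p,q)$), and a dimension bound coming from the stratum containing $A$.

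For the first bound $\dim_A \X(d,n) \geq dn - \binom{n}{2}$, the plan is to apply Krull's height theorem. The ideal $\I(d,n)\subseteq S$ is generated by the $\binom{n}{2}$ quadrics $\langle \bx_j,\bx_k\rangle$, and $S$ is a polynomial ring of Krull dimension $dn$, so every minimal prime of $\I(d,n)$ has height at most $\binom{n}{2}$. Hence every irreducible component of $\X(d,n)$ has dimension at least $dn - \binom{n}{2}$. Since any $A\in \mcL_{p,q}$ lies on some irreducible component of $\X(d,n)$, the local dimension at $A$ inherits this lower bound. (This is precisely the argument already used in the proof of Theorem \ref{ThmIrreducibleDecomposition}.)

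For the second bound $\dim_A \X(d,n) \geq \sigma(p,q)$, the plan is to exhibit an irreducible subvariety of $\X(d,n)$ of dimension $\sigma(p,q)$ passing through $A$. By Theorem \ref{ThmStructureLpq}, $\mcL_{p,q}$ is a smooth quasiprojective variety of pure dimension $\sigma(p,q)$. Let $Y$ denote the closure (in $\Mat(d,n)$) of the irreducible component of $\mcL_{p,q}$ containing $A$; then $Y$ is an irreducible closed subvariety of $\V(d,n) \subseteq \X(d,n)$, it contains $A$, and $\dim Y = \sigma(p,q)$. Consequently $Y$ lies in some irreducible component of $\X(d,n)$, which therefore has dimension at least $\sigma(p,q)$, and this bounds $\dim_A \X(d,n)$ from below.

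Taking the maximum of the two bounds yields the claim. There is no real obstacle here, since the hypothesis $(p,q)\in\Omega$ is not even needed for the argument; both inputs, Krull's height theorem and the dimension computation for $\mcL_{p,q}$, have already been established.
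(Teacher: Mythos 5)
Your proof is correct, and for the bound $\sigma(p,q)$ it coincides with the paper's (the stratum $\mcL_{p,q}$ sits inside $\X(d,n)$ and has pure dimension $\sigma(p,q)$, so the local dimension at any of its points is at least that). Where you genuinely diverge is the bound $dn-{n \choose 2}$: the paper derives it from the degeneration machinery, arguing that if $\sigma(p,q)<dn-{n\choose 2}$ then $(p,q)$ is not maximal for $\preceq$ (Theorem \ref{ThmIrreducibleDecomposition}) and hence, by Corollary \ref{CorDichotomyOmega}, $\mcL_{p,q}\subseteq \overline{\mcL_{n,0}}$, so the point lies on the principal component of dimension $dn-{n\choose 2}$; you instead apply Krull's height theorem directly, noting that every minimal prime of $\I(d,n)$ has height at most $\mu(\I(d,n))={n\choose 2}$, so every irreducible component of $\X(d,n)$, in particular one through the given point, has dimension at least $dn-{n\choose 2}$. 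Both routes are sound (and the Krull argument is already invoked in the paper, inside the proof of Theorem \ref{ThmIrreducibleDecomposition}), but they buy slightly different things: your version is more elementary, uses only the number of generators of $\I(d,n)$, and, as you observe, does not need the hypothesis $(p,q)\in\Omega$ at all; the paper's version yields the finer geometric fact that the low-dimensional boundary strata actually lie on the closure of the principal stratum $\mcL_{n,0}$, which is the structural statement (Corollary \ref{CorDichotomyOmega}) that gets reused elsewhere, e.g.\ in the proof of the $R_1$ property in Theorem \ref{ThmPrimeExtended}. For the proposition as stated, either argument suffices.
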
 

\begin{proof}
Clearly, 
 the local dimension  of $\X(d,n)$ at any point of $\mcL_{p,q}$ is at least equal to  $\dim \mcL_{p,q} =\sigma(p,q)$.
If $\sigma(p,q) < dn- {n \choose 2}$, then,
by Theorem \ref{ThmIrreducibleDecomposition},
 $(p,q)$ is not maximal  with respect to $\preceq$, and by Corollary \ref{CorDichotomyOmega} it follows that $\mcL_{p,q}\subseteq \overline{\mcL_{n,0}}$,
 so the local dimension is at least
  $\dim \mcL_{n,0} =dn- {n \choose 2}$.
\end{proof}

We denote by $\Theta(d,n)$ 
the Jacobian matrix of the affine scheme $\X(d,n)$.
This matrix has ${n\choose 2}$ rows, indexed by pairs $(h,k)$ with $1\leq h<k\leq n$, corresponding to the  generators $\langle
\bx_h,\bx_k\rangle= \sum_{\iota=1}^dx_{\iota,h}x_{\iota,k}$ of  $\I(d,n)$;
it has $dn$ columns, indexed by pairs $(i,j)$ with $1 \leq i \leq d, 1 \leq j \leq n$, corresponding to the variables $x_{i,j}$ of $S$.
The entry in row $(h,k)$ and column $(i,j)$ is 
\begin{equation}\label{EqJacobianMatrixEntry}
\Theta(d,n)_{(h,k),(i,j)}= \frac{\partial \sum_{\iota=1}^dx_{\iota,h}x_{\iota,k}}{\partial x_{i,j}}= 
\begin{cases}
x_{i,k} & \text{if }\, j =h\,,\\
x_{i,h} & \text{if }\, j =k\,,\\
0 & \text{if }\, j \ne h,k\,.\\
\end{cases}
\end{equation}
By the Jacobian criterion, 
a point $A \in \X(d,n)$ is smooth if and only if 
the rank of $\Theta(d,n)$, 
 evaluated at $A$, 
 is greater than or equal to the local codimension of $\X(d,n) \subseteq \Mat(d,n)$ at $A$.
 If $A \in \mcL_{p,q}$ with $(p,q) \in \Omega$,
we can use  Proposition \ref{PropDimensionAtStratumLpq} to make this statement explicit:
\begin{equation}\label{EqRankJacobianAtA}
\rk \big(\Theta(d,n)(A)\big) \geq \min\left({n \choose 2},\, dn- \sigma(p,q)\right).
\end{equation}

The following observation allows to reduce the proof of \eqref{EqRankJacobianAtA} to the case $p=0$.

\begin{prop}\label{PropReductionToP0}
Suppose $A \in \mcL_{p,q}\subseteq \X(d,n)$ satisfies \eqref{EqRankJacobianAtA},
then the matrix 
$$
A' = 
\matrix{A& 0 \\ 0 & 1}\in \mcL_{p+1,q}\subseteq \X(d+1,n+1)
$$
also satisfies \eqref{EqRankJacobianAtA}.
\end{prop}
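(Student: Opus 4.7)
The strategy is a direct block-matrix computation of $\Theta(d+1, n+1)$ at $A'$, exhibiting $\Theta(d, n)(A)$ as a summand and showing that the additional row and column introduced by passing from $(d, n)$ to $(d+1, n+1)$ contribute exactly $n$ independent rows to the Jacobian. The numerical identities governing the right-hand side of \eqref{EqRankJacobianAtA} are calibrated so that this gain of $n$ matches exactly the increment of $\min\bigl(\binom{n+1}{2}, (d+1)(n+1) - \sigma(p+1,q)\bigr)$ over $\min\bigl(\binom{n}{2}, dn - \sigma(p,q)\bigr)$.

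First I would verify that $A'$ indeed sits in $\mcL_{p+1,q} \subseteq \X(d+1, n+1)$. The last column of $A'$ is the standard unit vector $e_{d+1}$, which is anisotropic and orthogonal to every other column of $A'$, since all of them vanish in coordinate $d+1$. Hence $A' \in \X(d+1, n+1)$ with $\rka(A') = p+1$ and $\rki(A') = q$. Strictly speaking, the $(p+1)$-th column of $A'$ is isotropic, so a transposition of columns $p+1$ and $n+1$ (which lies in $\Sigma_{n+1} \subseteq G$ and therefore preserves both the Jacobian rank and all numerical invariants) is needed to place $A'$ in $\mcL_{p+1,q}$; this is harmless bookkeeping.

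The main step is to partition the rows of $\Theta(d+1, n+1)$ according to whether $k \leq n$ or $k = n+1$, and the columns according to the splits $\{1,\ldots,d\} \sqcup \{d+1\}$ and $\{1,\ldots,n\} \sqcup \{n+1\}$. A direct application of \eqref{EqJacobianMatrixEntry}, combined with the facts that $(A')_{d+1, j} = 0$ for $j \leq n$ and $(A')_{i, n+1} = \delta_{i, d+1}$, yields the block decomposition
\[
\Theta(d+1, n+1)(A') \;=\;
\begin{pmatrix} \Theta(d,n)(A) & 0 & 0 & 0 \\ 0 & I_n & A^\intercal & 0 \end{pmatrix},
\]
where the rows $(h, k)$ with $k \leq n$ give the top block, the rows $(h, n+1)$ give the bottom block, the identity $I_n$ occupies the columns $(d+1, j)$ with $j \leq n$, and $A^\intercal$ occupies the columns $(i, n+1)$ with $i \leq d$. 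Since the top and bottom row groups meet the indicated column groups in disjoint slots, the rank is additive:
\[
\rk\bigl(\Theta(d+1, n+1)(A')\bigr) \;=\; \rk\bigl(\Theta(d,n)(A)\bigr) + n.
\]

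Finally I would perform the arithmetic reconciliation. A short calculation from \eqref{EqFunctionStrataDimension} gives $\sigma_{d+1, n+1}(p+1, q) - \sigma_{d, n}(p, q) = d+1$, and therefore $(d+1)(n+1) - \sigma_{d+1, n+1}(p+1, q) = \bigl(dn - \sigma_{d,n}(p,q)\bigr) + n$; of course $\binom{n+1}{2} = \binom{n}{2} + n$ as well. Both arguments of the minimum in \eqref{EqRankJacobianAtA} therefore increase by exactly $n$, which matches the rank gain from the previous step, so the assumption that $A$ satisfies \eqref{EqRankJacobianAtA} transfers directly to $A'$. There is no real obstacle here; the only subtlety is the column-ordering convention for $\mcL_{p+1,q}$, handled by the $\Sigma_{n+1}$-action, and the rest is a mechanical block computation.
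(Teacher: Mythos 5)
Your proof is correct and follows essentially the same route as the paper's: a block decomposition of $\Theta(d+1,n+1)(A')$ showing the rank increases by exactly $n$, matched against the observation that both $\binom{n}{2}$ and $dn-\sigma(p,q)$ increase by $n$ when passing to $(d+1,n+1,p+1,q)$. Your extra details (computing the blocks the paper leaves as $\ast$, and the $\Sigma_{n+1}$ transposition to place $A'$ literally in $\mcL_{p+1,q}$) are accurate but do not change the argument.
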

\begin{proof}
Up to reordering rows and columns, the new Jacobian matrix, evaluated at $A'$,  has the following block decomposition
$$
\Theta(d+1,n+1)(A')
=
\begin{pNiceMatrix}[first-row]
\shortstack{  $(i,j)$ \\ $i \leq d, j \leq n$ \\ } 
& 
\shortstack{  $(d+1,j)$ \\   $ j \leq n$ \\ }
&
\shortstack{  $(i,n+1)$ \\   $i \leq d+1$ \\ }
\\
       \Theta(d,n)(A)   & \ast   & 0   \\
       0   & \mathrm{Id}_{n}   & \ast
\end{pNiceMatrix}
\quad
\begin{array}{l}
(h,k),\, k\leq n\\ (h,n+1)
\end{array}
$$
where we marked the indices of the rows and columns relative to each of the blocks.
The top-left block is equal to $\Theta(d,n)(A)$, since it involves the old rows and columns.
The bottom-left block vanishes, since the only non-zero entries of $\Theta(d+1,n+1)$ in this region
are of the form $x_{i,n+1}$ with $i \leq d$, and they vanish on $A'$.
The bottom-middle block is an $n\times n$ identity matrix, 
since the only non-zero entries of $\Theta(d+1,n+1)$ in this region
are equal to $x_{d+1,n+1}$, for $j=h$, and this entry is  1 in $A'$.
All these assertions follow easily from formula 
\eqref{EqJacobianMatrixEntry}
and the structure of $A'$.
We conclude that 
$$\rk \big(\Theta(d+1,n+1)(A')\big)  = \rk\big(\Theta(d,n)(A)\big)+n,$$
thus, the left-hand side of \eqref{EqJacobianMatrixEntry} increases by $n$ when replacing 
$(d,n,p,q)$ with $(d+1,n+1,p+1,q)$ and   $A$ with $A'$.
The right-hand side also increases by $n$:
this is obvious for ${n \choose 2}$, and a straightforward calculation for $dn-\sigma(p,q)$.
\end{proof}

In the next two results, 
we prove \eqref{EqRankJacobianAtA} when $p=0$.
We denote by  $\nu \in \kk$ a fixed square root of $-1$.

\begin{prop}\label{PropSmoothPointP0QN}
If $(0,q) \in \Omega_1$,
 then $\mcL_{0,q}$ contains a smooth point of $\X(d,n)$.
\end{prop}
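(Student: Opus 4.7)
The plan is to apply the Jacobian criterion for smoothness to an explicit, easy-to-construct point of $\mcL_{0,q}$. The hypothesis $(0,q)\in\Omega_1$ forces $p=n-q=0$, so $q=n$, and the condition $q\leq \min(d-n,n)$ specializes to $d\geq 2n$. By Proposition \ref{PropMaximalDimensionIsotropic} there then exists an $n$-dimensional isotropic subspace $H\subseteq\kk^d$. Any basis $\bw_1,\ldots,\bw_n$ of $H$ automatically consists of pairwise orthogonal isotropic vectors, since every vector of $H$ is isotropic and consequently $\langle \bv,\bw\rangle =0$ for all $\bv,\bw\in H$. The matrix $A$ whose $j$-th column is $\bw_j$ therefore lies in $\mcL_{0,n}$.

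A direct evaluation of \eqref{EqFunctionStrataDimension} at $(0,n)$ gives $\sigma(0,n)=dn-\binom{n+1}{2}$, so $dn-\sigma(0,n)=\binom{n+1}{2}\geq \binom{n}{2}$. By Proposition \ref{PropDimensionAtStratumLpq} and the reformulation \eqref{EqRankJacobianAtA} of the Jacobian criterion, it therefore suffices to show that
\begin{equation*}
\rk\bigl(\Theta(d,n)(A)\bigr)\geq \binom{n}{2}.
\end{equation*}
As $\Theta(d,n)$ has exactly $\binom{n}{2}$ rows, this amounts to checking that its rows, evaluated at $A$, are linearly independent.

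The main step is this linear-independence calculation, which I will carry out by reorganizing each row of $\Theta(d,n)(A)$ into a $d\times n$ block corresponding to the variables $x_{i,j}$. By the explicit formula \eqref{EqJacobianMatrixEntry}, the row indexed by $(h,k)$ becomes the $d\times n$ matrix $M_{h,k}(A)$ whose $h$-th column equals $\bw_k$, whose $k$-th column equals $\bw_h$, and whose remaining columns are zero. A hypothetical relation $\sum_{h<k} c_{h,k}\,M_{h,k}(A)=\bz$ projected onto the $j$-th column reads
\begin{equation*}
\sum_{k>j} c_{j,k}\,\bw_k \;+\; \sum_{h<j} c_{h,j}\,\bw_h \;=\; \bz.
\end{equation*}
Since $\bw_1,\ldots,\bw_n$ are linearly independent, every $c_{h,k}$ must vanish, which concludes the proof.

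I do not anticipate any real obstacle: the construction of $A$ is immediate once $d\geq 2n$, and the block-decomposition argument for the Jacobian is clean because the absence of anisotropic columns keeps the row structure especially simple (the same analysis will be considerably more delicate in the mixed case $(p,q)$ with $p>0$, where one presumably invokes Proposition \ref{PropReductionToP0} to reduce back here, or passes to the Hilbert-function technology the authors advertise).
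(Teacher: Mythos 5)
Your proof is correct and takes essentially the same route as the paper: reduce, via Proposition \ref{PropDimensionAtStratumLpq} and \eqref{EqRankJacobianAtA}, to showing that the Jacobian $\Theta(d,n)$ has full row rank $\binom{n}{2}$ at a point of $\mcL_{0,n}$, and verify this by direct inspection of the rows given by \eqref{EqJacobianMatrixEntry}. The only difference is cosmetic: the paper evaluates at the explicit frame whose $j$-th column is $\be_j+\nu\,\be_{n+j}$, where an identity submatrix of size $\binom{n}{2}$ is immediately visible, whereas your column-block argument establishes full row rank at an arbitrary basis of an $n$-dimensional isotropic subspace, which is marginally more general (it shows every point of $\mcL_{0,n}$ is smooth on $\X(d,n)$) but relies on the same idea.
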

\begin{proof}
Since $(0,q) \in \Omega_1$,
we have
$ n=q$,
and $2n = 2q\leq d$, cf. \eqref{EqSegment1}.
The following matrix $A\in \mcL_{0,n}$
$$
A = 	\matrix{\mathrm{Id}_{n }\\\nu \mathrm{Id}_{n }\\0}	\in X(d,n)
	$$
verifies the claim. 
Indeed, evaluate  $\Theta(d,n)$ at $A$, and consider the submatrix formed by columns $(i,j)$ with $1 \leq i < j \leq n$.
By \eqref{EqJacobianMatrixEntry},
this submatrix is the identity matrix, of size ${n\choose 2}$,
so  \eqref{EqRankJacobianAtA} is proved.
\end{proof}

\begin{prop}\label{PropSmoothPointP0Q2D}
Let $(0,q) \in \Omega_2$, then $\mcL_{0,q}$ contains a smooth point of $\X(d,n)$.
\end{prop}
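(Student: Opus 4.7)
The plan is to construct a smooth point $A\in\mcL_{0,q}$ whose columns lie in a fixed maximal isotropic subspace of $\kk^{2q}$, and to compute the Jacobian rank at $A$ by identifying the cokernel of the tangent-space map with classical vanishing conditions on quadrics.  Since $(0,q)\in\Omega_2$ we have $d=2q$ and $n\geq q+1$.  I would fix a Witt decomposition $\kk^{2q}=U\oplus V$ with $U=\Span(u_1,\ldots,u_q)$, $V=\Span(v_1,\ldots,v_q)$, both isotropic and $\langle u_i,v_j\rangle=\delta_{ij}$, and consider $A\in\mcL_{0,q}$ whose $j$-th column equals $\sum_i C_{ij}u_i$ for some $C\in\Mat(q,n)$ of rank $q$ to be chosen below.

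Because $A$ is supported in the isotropic subspace $U$, the tangent-space equations $\langle\ba_j,\bb_k\rangle+\langle\bb_j,\ba_k\rangle=0$ for $j\neq k$ impose no constraint on the $U$-component $B^U$ of a perturbation.  Writing the $V$-component as $\bb_k^V=\sum_i D_{ik}v_i$, the remaining conditions on $D\in\Mat(q,n)$ amount to $C^\intercal D+D^\intercal C$ having zero off-diagonal entries, and define a linear map $\Phi_C\colon\kk^{qn}\to\kk^{\binom{n}{2}}$.  Hence $\dim T_A\X(d,n)=2qn-\rk(\Phi_C)$.  Since $\sigma(0,q)=qn+\binom{q}{2}$, Proposition~\ref{PropDimensionAtStratumLpq} reduces smoothness of $A$ to the inequality $\rk(\Phi_C)\geq \min\bigl(\binom{n}{2},\,qn-\binom{q}{2}\bigr)$.

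The crucial step is the cokernel computation.  Unwinding the definition, $\mathrm{coker}(\Phi_C)$ is isomorphic to the space of symmetric $n\times n$ matrices $\Lambda$ with zero diagonal satisfying $C\Lambda=0$.  Setting $m=n-q$ and picking $K_m\in\Mat(n,m)$ of rank $m$ whose columns span $\ker C$, every such $\Lambda$ factors uniquely as $\Lambda=K_mSK_m^\intercal$ with $S$ a symmetric $m\times m$ matrix, and the condition $\Lambda_{ii}=0$ translates into $\kappa_i^\intercal S\kappa_i=0$, where $\kappa_i\in\kk^m$ is the $i$-th row of $K_m$.  Thus $\dim\mathrm{coker}(\Phi_C)$ equals $\binom{m+1}{2}$ minus the rank of the $n$ linear functionals $S\mapsto\kappa_i^\intercal S\kappa_i$ on $\mathrm{Sym}^2\kk^m$; these are exactly the conditions that a quadratic form on $\kk^m$ vanish at $\kappa_1,\ldots,\kappa_n$.

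As $C$ varies among rank-$q$ matrices, the rows $\kappa_i$ of a basis of $\ker C$ can be arranged to be an arbitrary $n$-tuple of vectors spanning $\kk^m$, so I may take them generic.  The classical non-defectivity of the degree-$2$ Veronese---equivalently, the statement that generic points in $\PP^{m-1}$ impose independent conditions on quadrics, which follows by induction on $n$ using that the rank-one symmetric tensors $\kappa^{\otimes 2}$ linearly span $\mathrm{Sym}^2\kk^m$---then forces those $n$ functionals to have rank $\min(n,\binom{m+1}{2})$.  A short arithmetic check with $m=n-q$ yields $\rk(\Phi_C)=\min\bigl(\binom{n}{2},\,qn-\binom{q}{2}\bigr)$, establishing smoothness of $A$.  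The main subtlety is spotting this Veronese reformulation in the first place; once it is in hand the non-defectivity is standard, but a careless explicit choice of $C$ (such as a Vandermonde-type $\kappa_i$, which places the points on a rational normal curve) can fail the genericity hypothesis and yield a strictly smaller rank in the regime $n>2m-1$.
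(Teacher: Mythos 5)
Your proposal is correct and takes essentially the same route as the paper: a point of $\mcL_{0,q}$ supported in a fixed maximal isotropic subspace and parameterized by a general $q\times n$ matrix, the smoothness criterion via Proposition \ref{PropDimensionAtStratumLpq} and the Jacobian, and a reduction to the same Veronese general-position input (that $n$ general points in $n-q$ variables impose independent conditions on, or exhaust, the quadrics — dual to the paper's statement about squares of general linear forms). The only difference is bookkeeping: you compute the cokernel of the tangent map via the factorization $\Lambda=K_mSK_m^\intercal$, whereas the paper computes the image as the Hilbert function $\dim_\kk[L]_2$ in $\kk[Z_1,\ldots,Z_n]/(Z_1^2,\ldots,Z_n^2)$; your rank identity $\rk(\Phi_C)=\min\left(\binom{n}{2},\,qn-\binom{q}{2}\right)$ and the supporting linear algebra check out.
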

\begin{proof}
Since $(0,q) \in \Omega_2$,
we have
$d=2q$ and $q \leq n$. cf. \eqref{EqSegment2}.
Equation \eqref{EqRankJacobianAtA} becomes 
\begin{equation}\label{EqRankJacobianAtAQ2D}
\rk \big(\Theta(2q,n)(A)\big) \geq
  \min\left({n \choose 2},\, nq-{q \choose 2}\right).
\end{equation}
We are going to prove this inequality for a matrix $A \in  \mcL_{0,q}$ of the form
$$
A = \matrix{B \\ \nu B}
$$
with $B =(b_{i,j})\in \Mat(q,n)$ a general matrix.
Consider the Jacobian matrix $\Theta(2q,n)(A)$ evaluated at $A$, and let 
 $\Theta'$ denote the ``half'' submatrix 
where we consider all the columns $(h,k)$, with $1 \leq h < k \leq n$,
and only the rows $(i,j)$ with $1 \leq i \leq q$ and $1 \leq j \leq n$
(corresponding to the top half of $A$).
By \eqref{EqJacobianMatrixEntry}, the entries of this matrix are
\begin{equation}\label{EqEntryThetaPrime}
\Theta'_{(h,k),(i,k)} = 
\begin{cases}
b_{i,k}& \text{ if }\,j = h,\\
b_{i,h} & \text{ if }\,j = k,\\
0 & \text{ if }\,j \ne h,k. 
\end{cases}
\end{equation}
We will show that $\Theta'$ satisfies the lower bound \eqref{EqRankJacobianAtAQ2D}.

Now we give a different interpretation for this matrix. 
Let $P = \kk[Z_1, \ldots, Z_n]$ be a standard graded polynomial ring and $R =  P/(Z_1^2, \ldots, Z_n^2)$.
Let $\ell_i = \sum_{\gamma =1}^n b_{i,\gamma}Z_\gamma \in P$ for $i = 1,\ldots, q$; 
these are the $q$ general linear forms encoded by the rows of $B$.
For simplicity, we use $\ell_1, \ldots, \ell_q$ to denote the linear forms in both $P$ and $R$.
Let $L = (\ell_1, \ldots, \ell_q) \subseteq R$ be the ideal generated by these forms.

We use the symbol $[\cdot]_\cdot$ to denote graded components.
The vector space $[L]_2\subseteq [R]_2$ is spanned  by the quadrics 
$\ell_i Z_j$ with $1 \leq i \leq q, 1 \leq j \leq n$.
Write these quadrics in terms of the monomial basis of $[R]_2$: that is, form the matrix whose columns correspond to $ Z_hZ_k$ with $ 1 \leq h < k \leq n$,
rows correspond to $\ell_i Z_j$ with $1 \leq i \leq q, 1 \leq j \leq n$,
and the entry 
in row $(i,j)$, column $(h,k)$ is
the coefficient of $Z_hZ_k$ in $\ell_i Z_j$.
This matrix is precisely the transpose of $\Theta'$,
thus, 
 $\dim_\kk [L]_2 = \rk (\Theta')$. 

Thus, it suffices to  show the inequality
$\dim_\kk [L]_2 \geq \min \left( {n \choose 2}, qn - {q \choose 2}\right) $.
Considering the preimage   $J =  (\ell_1, \ldots, \ell_q)+(Z_1^2, \ldots, Z_n^2)\subseteq P$ of $L$, since $\dim_\kk[J]_2 = \dim_\kk[L]_2 + n$, the inequality is equivalent to
\begin{equation}\label{EqRankJ}
\dim_\kk [J]_2 \geq \min \left( {n \choose 2}+n, qn - {q \choose 2}+n\right) = 
\min \left( {n+1 \choose 2}, (q+1)n - {q \choose 2}\right).
\end{equation}
Since  $q \leq n$,
we may regard $\ell_1, \ldots, \ell_q$ as the images of $Z_1, \ldots, Z_q$ under a general  change of coordinates. 
Applying the inverse change of coordinates, we obtain the ideal  $K= (Z_1, \ldots, Z_q,h_1^2, \ldots, h_n^2) \subseteq P$, where $h_1, \ldots, h_n \in P$ are general linear forms. 
Then, \eqref{EqRankJ} is equivalent to
\begin{equation}\label{EqRankK}
\dim_\kk [K]_2 \geq 
\min \left( {n+1 \choose 2}, (q+1)n - {q \choose 2}\right)
\end{equation}
We now translate the problem again, by going  modulo the variables $Z_1, \ldots, Z_q$.
Let $r = n-q$, denote by $Y_1, \ldots, Y_r$ the images of $Z_{q+1}, \ldots, Z_n$ in the polynomial ring  $Q = \kk[Y_1, \ldots, Y_r] = P/(Z_1, \ldots, Z_q)$, 
 use the same symbols
 $h_1, \ldots, h_n \in Q $
to denote the images of the general linear forms,
and let $H = (h_1^2, \ldots, h_n^2)\subseteq Q$ be the image of $K$.
We have  
$$ \dim_\kk [K]_2 -\dim_\kk[H]_2 =\dim_\kk[(Z_1, \ldots, Z_q)]_2
=qn - {q \choose 2}
$$
since the only linear syzygies among  $Z_1, \ldots, Z_q$ are the Koszul syzygies.
Subtracting this quantity 
from the two terms in the  right hand side of \eqref{EqRankK} we obtain
\begin{align*}
{n+1 \choose 2} - \dim_\kk\big[(Z_1, \ldots, Z_q)\big]_2 &= \dim_\kk \big[\kk[Y_{1}, \ldots Y_r]\big]_2 =  {r+1\choose 2},\\
(q+1)n-{q \choose 2} -  \dim_\kk\big[(Z_1, \ldots, Z_q)\big]_2 &= n,
\end{align*}
thus, \eqref{EqRankK} is equivalent to 
\begin{equation}\label{EqJRankH}
\dim_\kk [H]_2 = 
\min \left( {r+1 \choose 2}\,,\,\, n\right).
\end{equation}
This amounts to the statement that  if $h_1, \ldots, h_n \in \kk[Y_1, \ldots, Y_r]$ are general linear forms,
then $ h_1^2, \ldots, h_n^2$ are either linearly independent or they generate 
$\kk[Y_1, \ldots, Y_r]_2$.
But this follows from a well-known property of the Veronese embedding. 
Specifically, the map $\big[\kk[Y_1, \ldots, Y_r]\big]_1 \to \big[\kk[Y_1, \ldots, Y_r]\big]_2$,
 defined by $h \mapsto h^2$, induces a morphism $\Phi: \mathbb{P}^{r-1} \to \mathbb{P}^{{r+1 \choose 2}-1}$
that is given in coordinates by 
$$
\Phi\big([\alpha_1 : \cdots :\alpha_r]\big)= [\alpha_1^2 : 2\alpha_1\alpha_2 :  \alpha_2^2 : 2 \alpha_1\alpha_3:  \cdots :\alpha_r^2].
$$
Since $\mathrm{char}(\kk) \ne 2$, $\Phi$ is the composition of second Veronese   embedding  $\nu_2$ with an invertible projective linear transformation.
The Veronese embedding preserves the general position  property, that is, 
if $[h_1], \ldots, [h_n] \in \mathbb{P}^{r-1}$ are in general quadratic position, then $\nu_2([h_1]), \ldots, \nu_2([h_n]) \in
\mathbb{P}^{{r+1 \choose 2}-1}$ are in general linear position, 
and therefore the same holds for $\Phi([h_1]), \ldots, \Phi([h_n])$.
\end{proof}

\begin{proof}[Proof of Theorem \ref{ThmSmoothPointEveryStratum}]
A point $A \in \X(d,n)$ is smooth if and only if $\eqref{EqRankJacobianAtA}$ holds.
If $(p,q) \in \Omega(d,n)$ with $p>0$,  then, by \eqref{EqUpperBoundary},
 $(p-1,q) \in \Omega(d-1,n-1)$.
 Thus, by Proposition \ref{PropReductionToP0}, we may assume $p=0$, and the statement is proved in Propositions \ref{PropSmoothPointP0QN} and \ref{PropSmoothPointP0Q2D}.
\end{proof}

 \section{Proof of the main results}\label{SectionProofs}
 
In this section, we build on the results developed in the previous sections and derive the proofs of the main theorems of the paper. 
Here, the only assumption on the field is that $\mathrm{char}(\kk) \ne 2$.
 
We begin with an arithmetic lemma.

\begin{lemma}\label{LemmaArithmetic}
Let $\ee, \eo\in  \RR$ be two real numbers such that $|\ee-\eo|\leq 1$, and let $d \in \ZZ_{\geq 2}$.
\begin{enumerate}
\item The inequality
$$
d \geq 
\min \left(2 \left \lceil \frac{\ee}{2} \right\rceil
\,,\, \,
2 \left \lceil \frac{\eo-1}{2} \right\rceil +1\right)
$$
is equivalent to  $d \geq \ee$ if $d$ is even, to  $d \geq \eo$ if $d$ is odd.
\item
The inequality 
$$
d \geq \min \left(2 \left \lfloor \frac{\ee}{2} \right\rfloor +2\,,\, \,
2 \left \lfloor \frac{\eo+1}{2} \right\rfloor +1\right)
$$
is equivalent to  $d > \ee$ if $d$ is even, to  $d > \eo$ if $d$ is odd.
\end{enumerate}
\end{lemma}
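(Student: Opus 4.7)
The plan is to interpret the minimum in each statement as a choice between the smallest integer of a given parity that satisfies a simple inequality, and then use the hypothesis $|\ee-\eo|\leq 1$ to verify that the parity-switching case does not create a gap.

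First, I would establish the following interpretations: for any real number $x$, the smallest even integer $\geq x$ is $2\lceil x/2\rceil$, the smallest odd integer $\geq x$ is $2\lceil (x-1)/2\rceil+1$, the smallest even integer $>x$ is $2\lfloor x/2\rfloor +2$, and the smallest odd integer $>x$ is $2\lfloor (x+1)/2\rfloor+1$. All four claims follow from the elementary fact that for any real $y$ the smallest integer $\geq y$ is $\lceil y \rceil$ and the smallest integer $>y$ is $\lfloor y\rfloor+1$, applied after rewriting ``$2k\geq x$'' as ``$k\geq x/2$'' and similarly for odd integers $2k+1$ and strict inequalities. Denote by $\ee^{\ast}, \eo^{\ast}$ the smallest even (resp.\ odd) integer $\geq \ee$ (resp.\ $\geq \eo$), and similarly $\ee^{\ast\ast}, \eo^{\ast\ast}$ the corresponding integers for the strict inequalities in part~(2). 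Thus part~(1) asks when $d\geq \min(\ee^{\ast},\eo^{\ast})$, and part~(2) asks when $d\geq \min(\ee^{\ast\ast},\eo^{\ast\ast})$.

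For part (1), I would split on the parity of $d$. If $d$ is even, the statement ``$d\geq \ee$'' is equivalent to ``$d\geq \ee^{\ast}$'' (since $d$ itself is an even integer). The implication $d\geq \ee^{\ast}\Rightarrow d\geq \min(\ee^{\ast},\eo^{\ast})$ is immediate. Conversely, if $d$ is even and $d\geq \min(\ee^{\ast},\eo^{\ast})$, the only nontrivial case is $d\geq \eo^{\ast}$; then since $\eo^{\ast}$ is odd and $d$ even, in fact $d\geq \eo^{\ast}+1$, and the hypothesis $|\ee-\eo|\leq 1$ gives $\eo^{\ast}+1\geq \eo+1\geq \ee$, so the even integer $\eo^{\ast}+1$ is $\geq \ee^{\ast}$, hence $d\geq \ee^{\ast}$. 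The case $d$ odd is symmetric, using $\ee^{\ast}+1\geq \ee+1\geq \eo$. The proof of part~(2) is virtually identical, replacing $\geq$ by $>$ throughout and invoking the corresponding characterizations $\ee^{\ast\ast},\eo^{\ast\ast}$ of smallest even/odd integers strictly above $\ee,\eo$.

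There is no real obstacle here: the only substantive content is the parity-switching argument, and the hypothesis $|\ee-\eo|\leq 1$ is exactly tight for making it work (if the gap were larger, the even integer just above an odd $\eo^{\ast}$ could fall short of $\ee$). The main care needed is bookkeeping of floors vs.\ ceilings and strict vs.\ non-strict inequalities; I would structure the write-up by proving the four characterizations of the ``smallest integer of fixed parity'' as a single preliminary step and then performing the parity-split argument once, noting that part~(2) follows by the same template.
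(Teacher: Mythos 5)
Your proposal is correct and follows essentially the same route as the paper: both interpret the two terms of the minimum as the least even integer $\geq \ee$ (resp.\ $>\ee$) and the least odd integer $\geq \eo$ (resp.\ $>\eo$), and both close the parity gap using $|\ee-\eo|\leq 1$. The only cosmetic difference is that the paper phrases the final step as the observation that these two integers differ by at most $1$, whereas you carry out the equivalent parity-bump argument ($d\geq \eo^{\ast}$ with $d$ even forces $d\geq \eo^{\ast}+1\geq \ee$, hence $d\geq \ee^{\ast}$) explicitly, which is a fine, slightly more detailed write-up of the same idea.
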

\begin{proof}
We begin with the proof of (1). 
The number $\dev=
2 \left \lceil \frac{\ee}{2} \right\rceil$
is the least even integer that is greater than or equal to $\ee$,
and
$\dod=
2 \left \lceil \frac{\eo-1}{2} \right\rceil +1$
is the least odd integer that is greater than or equal to $\eo$.
Moreover, it easy to see that the assumption 
$|\ee-\eo|\leq 1$
guarantees that 
$|\dev-\dod|\leq 1$.
This implies the desired conclusion.

The proof of (2) is similar:
$\gev = 2 \left \lfloor \frac{\ee}{2} \right\rfloor +2$
is the least even integer that is strictly greater than $\ee$,
$\god =  2 \left \lfloor \frac{\eo+1}{2} \right\rfloor +1$
is the least odd integer that is strictly greater than $\eo$,
and the assumption 
$|\ee-\eo|\leq 1$
guarantees that 
$|\gev-\god|\leq 1$.
\end{proof}

Next, we recall Serre's conditions and some basic facts about passing to the algebraic closure.
Let $A$ be a ring and $k\in \NN$. 
We say that $A$ has Serre's property $R_k$ (respectively, $S_k$) 
if for every prime $\mathfrak{p}$ of height at most $k$ the local ring $A_\mathfrak{p}$ is regular (respectfively, has depth at least $\min(k, \dim A_\mathfrak{p})$).

\begin{lemma}\label{LemmaAlgebraicClosure}
Let $A$ be a Noetherian standard graded $\kk$-algebra, and let $B = A \otimes_\kk \overline{\kk}$.
Then,
\begin{enumerate}
\item if $B$ is reduced, a domain, or a normal domain,  then so is $A$;
\item if $B$ has property $R_k$, respectively, $S_k$, then so does $A$;
\item $A$ is  equidimensional, Cohen-Macaulay, Gorenstein, or a  complete intersection if and only if $B$ has the same property;
\end{enumerate}
\end{lemma}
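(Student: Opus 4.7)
The plan is to derive the lemma from standard faithfully flat descent. The base-change map $\phi : A \to B = A \otimes_\kk \overline{\kk}$ is faithfully flat, because $\overline{\kk}$ is a free (and hence faithfully flat) $\kk$-module and faithful flatness is preserved under base change. In particular, $\phi$ is injective and $\Spec B \to \Spec A$ is surjective; these two consequences drive everything below.

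For part (1), injectivity of $\phi$ immediately transfers reducedness and the domain property from $B$ to $A$. For normality, I would first deduce that $A$ is a domain (as above) and then invoke the classical result that normality descends along faithfully flat homomorphisms of Noetherian rings, which is standard in Matsumura's \emph{Commutative Ring Theory} or the Stacks Project.

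For part (2), I invoke the descent of Serre's conditions $R_k$ and $S_k$ under faithful flatness. Given a prime $\mathfrak{p} \subset A$, surjectivity of $\Spec B \to \Spec A$ yields a prime $\mathfrak{q} \subset B$ lying over $\mathfrak{p}$, and the induced local homomorphism $A_\mathfrak{p} \to B_\mathfrak{q}$ is faithfully flat. The standard descent of regularity and depth along faithfully flat local homomorphisms then gives that $A$ satisfies $R_k$ (respectively $S_k$) whenever $B$ does.

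For part (3), both directions are well-known consequences of flat base change. The forward implications hold because the fibers $\overline{\kk} \otimes_\kk \kappa(\mathfrak{p})$ are (finite products of) fields, hence complete intersections of dimension zero, so that flat base change preserves equidimensionality, Cohen--Macaulayness, Gorensteinness, and the complete intersection property. The converse implications are the descent of these same properties along the faithfully flat map $\phi$. The main subtle point I foresee is the complete intersection property: I would handle this by presenting $A$ as a quotient of a standard graded polynomial ring by a generating set of the defining ideal of minimum cardinality, and observing that such a sequence is regular on $A$ if and only if its image is regular on $B$, again by faithful flatness. The standard graded setting ensures that the number of minimal generators of the defining ideal is invariant under base change, since it equals the dimension of a graded vector space piece, which is preserved by $- \otimes_\kk \overline{\kk}$.
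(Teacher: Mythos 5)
Your overall strategy --- faithfully flat descent along $\phi : A \to B = A \otimes_\kk \overline{\kk}$ --- is the same as the paper's, which disposes of (1) and (2) by citing the Stacks Project; your argument (injectivity of $\phi$ for reduced/domain, descent of normality, and descent of regularity and depth at a prime of $B$ lying over a given prime of $A$) is exactly the standard proof of those cited facts, and it works cleanly here because $\overline{\kk}/\kk$ is algebraic, so all fibers of $\phi$ are zero-dimensional and heights are preserved. For part (3) you take a genuinely different route: the paper checks Cohen--Macaulay, Gorenstein, and complete intersection through the minimal graded free resolution, using that graded Betti numbers, Hilbert functions, and heights are unchanged under $-\otimes_\kk\overline{\kk}$ (and quotes Hartshorne for equidimensionality), whereas you argue prime-by-prime via ascent/descent theorems for flat local homomorphisms with good closed fibers, supplemented by a graded count of minimal generators for the complete intersection property. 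Your graded argument for CI is fine (modulo the slip of saying the generators are ``regular on $A$'' rather than forming a regular sequence in the polynomial ring), and the descent direction of (3) is standard.

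The one genuine weak point is the justification of the ascent direction in (3). The fibers $\kappa(\mathfrak{p})\otimes_\kk\overline{\kk}$ are finite products of fields only when $\kappa(\mathfrak{p})$ is separable over $\kk$; the paper allows any field of characteristic $\ne 2$, in particular imperfect fields of characteristic $p>2$, and then these fibers can be non-reduced Artinian local rings (take $\kk=\mathbb{F}_p(t)$ and a residue field containing $t^{1/p}$). The conclusion you need --- that the local fiber rings are zero-dimensional complete intersections, hence Gorenstein and Cohen--Macaulay --- is still true, but it is a nontrivial theorem (the local rings of a tensor product of two field extensions are complete intersections, cf.\ EGA IV, 6.7.4.1), not a consequence of ``products of fields.'' Similarly, ``flat base change preserves equidimensionality'' is not a general fact about flat maps with zero-dimensional fibers; what is true, and what the paper invokes, is the finite-type-over-a-field statement that every irreducible component of the base change of an integral $\kk$-variety to $\overline{\kk}$ has the same dimension. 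Either add these inputs, or bypass fibers entirely as the paper does: in the standard graded setting, both directions of (3) follow at once from the invariance of the minimal graded free resolution and of Krull dimension under the field extension.
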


\begin{proof}
The natural map $A \to A \otimes_\kk \overline{\kk}$ is faithfully flat. 
Statements (1) and (2) follow  by \cite[Section 10.164]{Stacks}.
In (3), 
the statement about equidimensionality follows form 
\cite[II, Ex. 3.20]{Hartshorne};
the remaining statements follow 
 by faithful flatness since they can be checked, 
 for example, in terms of the minimal free resolutions of $A$ and $B$.
\end{proof}

Finally, we recall that Serre's conditions behave well under deformations.

\begin{lemma}\label{LemmaDeformation}
Let $A$ be a Noetherian standard graded $\kk$-algebra, $f\in A$ a regular homogeneous element of positive degree.
Then,
\begin{enumerate}
\item if $A/(f)$ is a domain, then $A$ is a domain;
\item if $A/(f)$ has property $R_k$, then so does $A$.
\end{enumerate}
\end{lemma}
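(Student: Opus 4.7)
\smallskip

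The plan for both parts is to exploit the standard fact that in a Noetherian local ring $(R,\mathfrak{m})$ with $f\in\mathfrak{m}$ a non-zero-divisor, if $R/(f)$ is regular then so is $R$: one lifts a regular system of parameters of $R/(f)$ to $R$ and adjoins $f$. Combined with the usual behavior of heights under quotient by a regular element, this reduces both parts to tracking how primes, heights, and regularity transfer through $A\twoheadrightarrow A/(f)$.

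For part (1), I would first use the grading to observe that $\bigcap_{m\geq 1}(f^{m})=0$: any nonzero element of the intersection would have homogeneous components of arbitrarily large degree, which is impossible. Every nonzero $a\in A$ therefore has a well-defined $f$-adic order and can be written $a=f^{i}a'$ with $a'\notin(f)$. Given nonzero $a,b\in A$, write $a=f^{i}a'$ and $b=f^{j}b'$ with $a',b'\notin(f)$; since $A/(f)$ is a domain, $a'b'\notin(f)$, in particular $a'b'\neq 0$, and since $f^{i+j}$ is a non-zero-divisor we get $ab=f^{i+j}a'b'\neq 0$.

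For part (2), let $\mathfrak{p}$ be a prime of $A$ with $\mathrm{ht}(\mathfrak{p})\leq k$; the goal is to show $A_\mathfrak{p}$ is regular. I would split on whether $f\in\mathfrak{p}$. If $f\in\mathfrak{p}$, then, because $f$ is a non-zero-divisor, $\mathrm{ht}(\mathfrak{p}/(f))=\mathrm{ht}(\mathfrak{p})-1\leq k$, so the hypothesis $R_k$ on $A/(f)$ gives that $A_\mathfrak{p}/fA_\mathfrak{p}$ is regular, and the lifting fact above gives $A_\mathfrak{p}$ regular. If $f\notin\mathfrak{p}$, I would pick a prime $\mathfrak{q}$ minimal over $\mathfrak{p}+(f)$; then $\mathfrak{p}\subsetneq\mathfrak{q}$ and $f\in\mathfrak{q}$, and assuming the height inequality $\mathrm{ht}(\mathfrak{q})\leq\mathrm{ht}(\mathfrak{p})+1\leq k+1$, applying the previous case to $\mathfrak{q}$ gives that $A_\mathfrak{q}$ is regular, and $A_\mathfrak{p}$ is then regular as a localization of the regular local ring $A_\mathfrak{q}$.

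The hard part will be establishing $\mathrm{ht}(\mathfrak{q})\leq\mathrm{ht}(\mathfrak{p})+1$ in the second case, since this inequality can fail in pathological Noetherian rings. My plan is to reduce to the affine-domain setting: $A$ is a standard graded $\kk$-algebra, hence a finitely generated $\kk$-algebra and universally catenary, so one may quotient by a minimal prime $\mathfrak{m}_0\subseteq\mathfrak{p}$ chosen so that $\mathrm{ht}(\mathfrak{q})$ is realized by a saturated chain through $\mathfrak{m}_0$. In the domain $A/\mathfrak{m}_0$ the dimension formula $\mathrm{ht}(\mathfrak{r})+\dim(A/\mathfrak{m}_0)/\mathfrak{r}=\dim A/\mathfrak{m}_0$ is available for every prime $\mathfrak{r}$; applying it to both $\mathfrak{p}/\mathfrak{m}_0$ and $\mathfrak{q}/\mathfrak{m}_0$, together with $\mathrm{ht}(\mathfrak{q}/\mathfrak{p})=1$ (immediate from Krull's principal ideal theorem, since $\mathfrak{q}/\mathfrak{p}$ is minimal over the principal ideal generated by the image of $f$ in $A/\mathfrak{p}$), yields the bound and completes the proof.
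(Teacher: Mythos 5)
Part (1) of your proposal is correct, and it is essentially the paper's own argument for (1) (the paper proves (1) by the same ``divide out the largest power of $f$'' contradiction, and for (2) it does not give a proof at all but cites Huneke--Ulrich, Remark 2.3).

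For part (2), your Case 1 ($f\in\mathfrak{p}$) is fine, but Case 2 ($f\notin\mathfrak{p}$) has genuine gaps. First, a prime $\mathfrak{q}$ minimal over $\mathfrak{p}+(f)$ need not exist: $\mathfrak{p}$ is an arbitrary prime, not necessarily contained in the graded maximal ideal $\mathfrak{m}=\bigoplus_{i>0}A_i$, so $\mathfrak{p}+(f)$ can be the unit ideal (take $A=\kk[x]$, $f=x$, $\mathfrak{p}=(x-1)$). Note that your argument never actually uses the grading; the standard repair is to reduce first to homogeneous primes, via the prime $\mathfrak{p}^{*}$ generated by the homogeneous elements of $\mathfrak{p}$, using that $\mathrm{ht}(\mathfrak{p})=\mathrm{ht}(\mathfrak{p}^{*})+1$ for non-graded $\mathfrak{p}$ and that $A_{\mathfrak{p}}$ is regular if and only if $A_{\mathfrak{p}^{*}}$ is; after this reduction $\mathfrak{p}\subseteq\mathfrak{m}$, so $\mathfrak{p}+(f)\subseteq\mathfrak{m}$ is proper. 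Second, the bound $\mathrm{ht}(\mathfrak{q})\leq\mathrm{ht}(\mathfrak{p})+1$ cannot be obtained as you propose: a chain realizing $\mathrm{ht}(\mathfrak{q})$ need not pass through any minimal prime contained in $\mathfrak{p}$, because a standard graded $\kk$-algebra need not be equidimensional, and then the inequality itself can fail. Concretely, for $A=\kk[x,y,z]/(xz,yz)$, $\mathfrak{p}=(x,y)$ (homogeneous, of height $0$) and $f=x+z$ (a homogeneous nonzerodivisor), the only prime containing $\mathfrak{p}+(f)$ is $\mathfrak{q}=(x,y,z)$, which has height $2$; worse, $A_{\mathfrak{q}}$ is not even regular (it has two minimal primes), while $A_{\mathfrak{p}}$ is a field, so no argument that derives regularity of $A_{\mathfrak{p}}$ from regularity at a prime containing $\mathfrak{p}+(f)$ can work there.

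So the case $f\notin\mathfrak{p}$ needs either an equidimensionality/Cohen--Macaulay-type hypothesis --- under which heights do add along chains and your argument goes through, and which is all the paper needs, since it applies the lemma only to complete intersections $S/\LSS(d,\mcG)$ --- or a genuinely different idea. In fact, pushing the example above a little further (a three-dimensional linear component together with the cone over a nodal plane cubic, meeting only at the origin, cut by a general linear form), one finds non-equidimensional standard graded algebras where $A/(f)$ satisfies $R_1$ at every prime of height at most one while $A$ does not; so in the bare generality in which you are arguing, the missing inequality is not merely hard to establish but reflects a real obstruction, and the honest statement of (2) should be read with the equidimensional (e.g.\ complete intersection) setting of the paper, or one should simply defer to the cited reference as the paper does.
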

\begin{proof}
For (1), assume that $(f) \subseteq A$ is a prime ideal and  $ab = 0$ in $A$ with $a,b \ne 0$. 
Then, 
$ab=0$ in $A/(f)$,  so either $a$ or $b$ is a multiple of $f$,
 and we  $f$ is a zerodivisor, contradiction.
For (2),
see for instance 
\cite[Remark 2.3]{HunekeUlrich}.
\end{proof}

\subsection{Generically reduced}
Recall that $\X(d,n)$ is generically reduced if there exist a dense open subset where the scheme is reduced.
This is equivalent to Serre's property $R_0$ for $\R(d,n)$.

\begin{cor}\label{CorGenericallyReduced}
The scheme $\X(d,n)$ is generically reduced for all $d,n$.
\end{cor}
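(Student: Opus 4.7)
The plan is to assemble the corollary directly from Theorem \ref{ThmIrreducibleDecomposition}, Theorem \ref{ThmSmoothPointEveryStratum}, and Remark \ref{RemTransitiveIrreducibleComponentsSpq}, which together amount to saying that every irreducible component of $\V(d,n)$ contains a smooth point of $\X(d,n)$. At such a point the local ring of $\X(d,n)$ is regular, hence reduced, and since the smooth locus is open, this yields generic reducedness.

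First I would reduce to the algebraically closed case. Generic reducedness of $\X(d,n)$ is exactly Serre's property $R_0$ for the ring $\R(d,n)$: at every minimal prime of $\I(d,n)$ the localization is regular (a field of dimension $0$). By Lemma \ref{LemmaAlgebraicClosure}(2), $R_0$ descends under the base change $\kk \to \overline{\kk}$, so it is enough to prove the statement assuming $\kk=\overline{\kk}$. All the tools from Sections \ref{SectionStratification}--\ref{SectionSmoothness} then apply.

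Next, the minimal primes of $\I(d,n)$ are in bijection with the irreducible components of $\V(d,n)=\X(d,n)_{\mathrm{red}}$. By Theorem \ref{ThmIrreducibleDecomposition}, each such component is the closure of an irreducible component of some stratum $\mcS_{p,q}$ with $(p,q)\in\Omega$ and $\sigma(p,q)\geq nd-\binom{n}{2}$. For any such $(p,q)\in\Omega$, Theorem \ref{ThmSmoothPointEveryStratum} guarantees that $\mcL_{p,q}$ contains a point that is smooth on $\X(d,n)$. The group $G$ of \eqref{EqGroupG} preserves the ideal $\I(d,n)$ and therefore acts on $\X(d,n)$ by algebraic automorphisms, so the smooth locus of $\X(d,n)$ is $G$-invariant. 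By Remark \ref{RemTransitiveIrreducibleComponentsSpq}, $G$ acts transitively on the set of irreducible components of $\mcS_{p,q}$, so \emph{every} irreducible component of $\mcS_{p,q}$ contains a smooth point of $\X(d,n)$, and a fortiori every irreducible component of $\V(d,n)$ does.

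Finally, since smoothness is an open condition, the smooth locus of $\X(d,n)$ is a dense open subscheme meeting every irreducible component, where $\X(d,n)$ is regular and in particular reduced. This gives $R_0$, completing the proof. The main obstacle at this stage is purely bookkeeping: the substantive geometric input --- classifying components and producing a smooth point in each relevant stratum --- has already been carried out in Sections \ref{SectionDegeneration} and \ref{SectionSmoothness}, and the role of Remark \ref{RemTransitiveIrreducibleComponentsSpq} is precisely to propagate the smooth point produced in $\mcL_{p,q}$ to every $G$-translate component of $\mcS_{p,q}$.
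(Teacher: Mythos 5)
Your proposal is correct and follows essentially the same route as the paper's proof: reduce to $\kk=\overline{\kk}$ via Lemma \ref{LemmaAlgebraicClosure}(2), then combine Theorem \ref{ThmIrreducibleDecomposition}, Theorem \ref{ThmSmoothPointEveryStratum}, and Remark \ref{RemTransitiveIrreducibleComponentsSpq} to put a smooth point of $\X(d,n)$ on every irreducible component, which gives $R_0$. Your write-up merely spells out the $G$-invariance of the smooth locus more explicitly than the paper does; there is no substantive difference.
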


\begin{proof}
By Lemma \ref{LemmaAlgebraicClosure} (2), in order to show property $R_0$,
we may assume that  $\kk$ is algebraically closed.
Combining Theorems \ref{ThmIrreducibleDecomposition}, \ref{ThmSmoothPointEveryStratum} and Remark \ref{RemTransitiveIrreducibleComponentsSpq},
we see that every irreducible component of $\X(d,n)$ contains a smooth point, therefore, the non-singular locus is dense in $\X(d,n)$.
\end{proof}

\subsection{Complete intersection}
Recall the following definition from the Introduction:
$$
D_{\mathrm{CI}}(n)  := 
\min \left(2 \left \lceil \frac{2n+1 - \sqrt{8n+1}}{2} \right\rceil
\,,\, \,
2 \left \lceil \frac{2n - \sqrt{8n-7}-1}{2} \right\rceil +1\right).
$$
\begin{thm}\label{ThmCIExtended}
Assume $d \geq 2$. The following conditions are equivalent:
\begin{enumerate}
\item $d \geq D_{\mathrm{CI}}(n)$,
\item $\R(d,n)$ is a complete intersection,
\item $\R(d,n)$ is a Gorenstein ring,
\item $\R(d,n)$ is a Cohen-Macaulay ring,
\item $\X(d,n)$ is equidimensional.
\end{enumerate}
Moreover, in this case, $\X(d,n)$ is reduced.
\end{thm}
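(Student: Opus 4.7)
The plan is first to reduce to the algebraically closed case using Lemma \ref{LemmaAlgebraicClosure}: condition (1) is field-independent, while (2)--(5) and reducedness all descend from $\overline{\kk}$ to $\kk$. Once $\kk = \overline{\kk}$, the results of Sections \ref{SectionStratification}--\ref{SectionSmoothness} apply. The central algebraic observation is that $\I(d,n)$ has exactly $\binom{n}{2}$ generators, hence $\I(d,n)$ is a complete intersection if and only if $\operatorname{ht}\I(d,n) = \binom{n}{2}$, equivalently $\dim\V(d,n) = dn - \binom{n}{2}$, equivalently every irreducible component of $\V(d,n)$ attains the minimum dimension guaranteed by Krull's height theorem.

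For the equivalence (1)$\Leftrightarrow$(2), I would apply Proposition \ref{PropMaximum} together with Lemma \ref{LemmaArithmetic}(1) to the real thresholds $\ee = 2n+1-\sqrt{8n+1}$ and $\eo = 2n-\sqrt{8n-7}$; the hypothesis $|\ee-\eo|\leq 1$ follows by rewriting $\sqrt{8n+1}-\sqrt{8n-7}$ as $8/(\sqrt{8n+1}+\sqrt{8n-7}) \in (0,1]$. The result is that $d \geq D_{\mathrm{CI}}(n)$ is equivalent to $\max_{\Delta(d,n)}\sigma = dn - \binom{n}{2}$, and Theorem \ref{ThmIrreducibleDecomposition} identifies this maximum with $\dim\V(d,n)$; combining with the algebraic observation closes the equivalence. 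The chain (2)$\Rightarrow$(3)$\Rightarrow$(4)$\Rightarrow$(5) is standard: a complete intersection is Gorenstein, Gorenstein is Cohen--Macaulay, and a Cohen--Macaulay quotient of a polynomial ring is unmixed, hence equidimensional.

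To close the loop with (5)$\Rightarrow$(1), I would argue the contrapositive. Assume $d < D_{\mathrm{CI}}(n)$. By Proposition \ref{PropMaximum}, the maximum of $\sigma$ on $\Delta(d,n)$ strictly exceeds $dn - \binom{n}{2}$ and is attained only at $P_2 = (d - 2\lfloor d/2\rfloor,\, \lfloor d/2\rfloor)$; in particular, $\sigma(P_1) < \sigma(P_2)$ where $P_1 = (\min(d,n),0)$. By Remark \ref{RemPrincipalComponent}, $P_1$ is always maximal in $\Delta(d,n)$ and therefore corresponds to an irreducible component of $\V(d,n)$; by Theorem \ref{ThmIrreducibleDecomposition}, the point $P_2 \in \Omega$ with $\sigma(P_2) > dn - \binom{n}{2}$ also corresponds to an irreducible component. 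Since $d \geq 2$ forces $P_1$ and $P_2$ to have distinct $q$-coordinates, these two components of $\V(d,n)$ have different dimensions, contradicting (5).

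Finally, the reducedness of $\X(d,n)$ in the complete intersection case follows by combining Theorem \ref{ThmGenericallyReduced} (property $R_0$) with the Cohen--Macaulay property (property $S_1$) via Serre's criterion for reducedness. The step requiring the most care will be the parity bookkeeping in Lemma \ref{LemmaArithmetic} and the arithmetic estimate $|\ee-\eo|\leq 1$; the remaining pieces are a clean assembly of ingredients developed in earlier sections.
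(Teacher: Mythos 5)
Your proposal is correct and follows essentially the same route as the paper: reduction to $\overline{\kk}$ via Lemma \ref{LemmaAlgebraicClosure}, the criterion that $\I(d,n)$ is a complete intersection iff its height equals its $\binom{n}{2}$ generators combined with Proposition \ref{PropMaximum} and Lemma \ref{LemmaArithmetic}(1) for (1)$\Leftrightarrow$(2), the standard chain (2)$\Rightarrow$(3)$\Rightarrow$(4)$\Rightarrow$(5), the contrapositive of (5)$\Rightarrow$(1) via the two components of different dimensions coming from $P_1$ and $P_2$, and Serre's criterion ($R_0$ from generic reducedness plus $S_1$ from Cohen--Macaulayness) for reducedness. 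The only quibble is your justification of $|\ee-\eo|\leq 1$: the claim $\sqrt{8n+1}-\sqrt{8n-7}\in(0,1]$ fails at $n=2$, where $8/(3+\sqrt{17})>1$, but since this difference is always less than $2$ the required inequality $|\ee-\eo|\leq 1$ still holds, so the argument is unaffected.
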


\begin{proof}
By Lemma \ref{LemmaAlgebraicClosure}, we may assume without loss of generality that $\kk = \overline{\kk}$.

The ideal $\I(d,n)$ is generated by a regular sequence if and only if its height equals its number of minimal generators, 
equivalently,
$\dim \Mat(d,n) - \dim \V(d,n) = \mu \big(\I(d,n)\big)$.
By Proposition \ref{PropMaximum},
this happens if and only if 
$d \geq  2n+1 - \sqrt{8n+1}$ when $d$ is even, 
if and only if
$d \geq  2n - \sqrt{8n-7}$ when $d$ is odd.
By Lemma \ref{LemmaArithmetic} (1), this happens if and only if $d \geq D_{\mathrm{CI}}(n)$.
Thus,
 (1) is equivalent to (2). 

The implications $(2) \Rightarrow (3)  \Rightarrow (4)  \Rightarrow (5) $ are well known.
To show $(5) \Rightarrow (1)$, 
suppose  $d < D_{\mathrm{CI}}(n)$.
By  Lemma \ref{LemmaArithmetic} (1) and Proposition  \ref{PropMaximum},
this implies that $\sigma(p,q)$ attains its maximum in $\Delta(d,n)$ only at $P_2 =  \left(
 d -2\left\lfloor\frac{d}{2} \right\rfloor
 \,,\, 
 \left\lfloor\frac{d}{2} \right\rfloor
 \right).
$
By Remark \ref{RemPrincipalComponent}, the element 
$P_1 = \big(\min(d, n),\,0\big)\in \Delta(d,n)$ is also  maximal with respect to $\preceq$, and we have $P_1 \ne P_2$ since $d \geq 2$.
By Remark \ref{RemIrreducibleComponentsMaximal}, both $P_1$ and $P_2$ give rise to irreducible components of $\X(d,n)$, of different dimension, so $\X(d,n)$ is not equidimensional.

Finally, assume the equivalent conditions hold.
In particular,  $\R(d,n)$ is Cohen-Macaulay, so it satisfies Serre's property $S_1$.
It also satisfies Serre's property $R_0$, by Corollary \ref{CorGenericallyReduced}.
By Serre's criterion for reducedness, it follows that $\R(d,n)$ is reduced.
\end{proof}

\begin{remark}\label{RemI1N}
The case $d=1$ is  exceptional;
the first paragraph of the proof is still valid, but the second is not. 
In any case, the ideal $\I(1,n)$ is not so interesting:
it can be regarded as the Stanley-Reisner ideal of a simplicial complex consisting  of $n$ isolated vertices,
or as the monomial edge ideal of the complete graph on $n$ vertices.
It is easy to see that it is always radical and Cohen-Macaulay,  but it is Gorenstein if and only if $n \leq 2$. 
Formally,
this means that conditions (1), (2) and (3) remain equivalent also when $d=1$, but $(4)$ and $(5)$ are strictly weaker.
\end{remark}

\subsection{Domain}
Recall the following definition from the Introduction:
$$
D_{\mathrm{prime}}(n)  :=\min \left(2 \left \lfloor \frac{2n+1 - \sqrt{8n+1}}{2} \right\rfloor +2
\,,\, \,
2 \left \lfloor \frac{2n - \sqrt{8n-7}+1}{2} \right\rfloor +1\right),
$$
\begin{thm}\label{ThmPrimeExtended}
The following conditions are equivalent:
\begin{enumerate}
\item $d \geq D_{\mathrm{prime}}(n)$,
\item $\R(d,n)$ is a domain,
\item $\R(d,n)$ is a normal domain.
\end{enumerate}
\end{thm}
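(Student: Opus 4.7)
The plan is to reduce to $\kk=\overline{\kk}$ via Lemma~\ref{LemmaAlgebraicClosure}(1) and then run the cycle $(3)\Rightarrow(2)\Rightarrow(1)\Rightarrow(3)$. The implication $(3)\Rightarrow(2)$ is immediate. For $(2)\Rightarrow(1)$ I would argue by contrapositive: if $d<D_{\mathrm{prime}}(n)$, then Lemma~\ref{LemmaArithmetic}(2) combined with Proposition~\ref{PropMaximum} yields $\sigma(P_2)\geq\sigma(P_1)$, so $P_2$ is maximal in $\Delta(d,n)$ with respect to $\preceq$; since $P_1$ is also maximal (Remark~\ref{RemPrincipalComponent}) and differs from $P_2$ as soon as $d\geq 2$, Theorem~\ref{ThmIrreducibleDecomposition} produces at least two irreducible components of $\V(d,n)$, so $\R(d,n)$ cannot be a domain. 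The case $d=1$ is handled directly, since $\I(1,n)$ is the Stanley--Reisner ideal of $n$ isolated vertices and has $n$ minimal primes.

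For $(1)\Rightarrow(3)$, assume $d\geq D_{\mathrm{prime}}(n)$. Since $D_{\mathrm{prime}}(n)\geq D_{\mathrm{CI}}(n)$, Theorem~\ref{ThmCIExtended} already makes $\R(d,n)$ a reduced Cohen--Macaulay complete intersection. Lemma~\ref{LemmaArithmetic}(2) and Proposition~\ref{PropMaximum} then show that $P_1=(n,0)$ is the unique maximum of $\sigma$ on $\Delta(d,n)$ (so, in particular, $d\geq n$), whence Theorem~\ref{ThmIrreducibleDecomposition} gives $\V(d,n)=\overline{\mcS_{n,0}}$ as a single irreducible component; reducedness together with a unique minimal prime shows that $\R(d,n)$ is a domain. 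To upgrade to normality, Cohen--Macaulayness provides $S_2$, and by Serre's criterion the task reduces to verifying $R_1$, namely that the singular locus of $\V(d,n)$ has codimension at least $2$.

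The plan for $R_1$ is to bound the singular locus stratum by stratum. The open stratum $\mcS_{n,0}=\mcL_{n,0}$ is acted on transitively by $\Ort(d)$ (Corollary~\ref{CorLpqHomogeneous}), so smoothness at the single point produced by Theorem~\ref{ThmSmoothPointEveryStratum} spreads to all of $\mcS_{n,0}$. For $(p,q)\in\Omega\setminus\{(n,0)\}$, uniqueness of the maximum gives $\sigma(p,q)\leq\sigma(n,0)-1$, while Theorem~\ref{ThmSmoothPointEveryStratum} together with Remark~\ref{RemTransitiveIrreducibleComponentsSpq} ensures that each irreducible component of $\mcS_{p,q}$ contains a smooth point, so the singular locus inside $\mcS_{p,q}$ has dimension at most $\sigma(p,q)-1\leq\sigma(n,0)-2$. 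For $(p,q)\notin\Omega$ one has $p+q<n$ and $p+2q<d$, and the increment formula $\sigma(p+1,q)-\sigma(p,q)=d-p-2q+1\geq 2$ from Proposition~\ref{PropStrictlyIncreasing} already forces $\sigma(p,q)\leq\sigma(n,0)-2$. The most delicate step is this $R_1$ verification: it requires establishing smoothness at \emph{every} point of the top-dimensional stratum by combining the pointwise smoothness of Theorem~\ref{ThmSmoothPointEveryStratum} with a transitive group action, and then matching the dimension estimates of Section~\ref{SectionDimension} to the stratum-wise smooth points of Section~\ref{SectionSmoothness}.
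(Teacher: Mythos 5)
Your overall strategy matches the paper's, but there are two genuine gaps. First, the reduction to $\kk=\overline{\kk}$ does not cover the implication $(2)\Rightarrow(1)$: Lemma~\ref{LemmaAlgebraicClosure}(1) only transfers the domain property \emph{from} $\R(d,n)\otimes_\kk\overline{\kk}$ \emph{down to} $\R(d,n)$, so showing that $\V(d,n)$ is reducible over $\overline{\kk}$ when $d<D_{\mathrm{prime}}(n)$ does not by itself show that $\I(d,n)$ fails to be prime over an arbitrary $\kk$ (compare $\RR[x,y]/(x^2+y^2)$, a domain that becomes reducible after extension to $\CC$). The paper closes this with a separate argument: the strata $\mcS_{P_1}$ and $\mcS_{P_2}$ are distinguished by the rank of the matrices they contain, so the colon ideal $\I(d,n):J$, where $J$ is generated by the minors of size $\min(d,n)$, is strictly between $\I(d,n)$ and $S$ after extension to $\overline{\kk}$; since colon ideals commute with field extension, the same strict inclusions hold over $\kk$, whence $\I(d,n)$ is not prime. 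Your proposal contains no substitute for this step.

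Second, your $R_1$ verification has an off-by-one error that hides exactly the delicate case. The correct increment is $\sigma(p+1,q)-\sigma(p,q)=d-(p+1)-2q+1=d-p-2q$, which under $p+2q\leq d-1$ is only $\geq 1$, not $\geq 2$. Consequently it is false that every stratum outside $\Omega$ has codimension at least $2$: the stratum $\mcS_{n-1,0}$ lies outside $\Omega$ and has codimension $\sigma(n,0)-\sigma(n-1,0)=d-n+1$ in $\V(d,n)$, which equals $1$ when $d=n$; and $d=n\geq D_{\mathrm{prime}}(n)$ does occur, namely for $n=2,3,4,5$. In those cases one must show that $\mcL_{n-1,0}$ meets (in fact lies in) the smooth locus of $\X(d,n)$, which is precisely where the paper invokes Corollary~\ref{CorLpqHomogeneous} (it is a homogeneous space) together with Proposition~\ref{PropReductionToP0} and an induction reducing to $\X(2,2)$, a cone over a smooth quadric in $\mathbb{P}^3$. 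As written, your argument would declare $R_1$ proved while skipping this case entirely; the rest of your stratum-by-stratum bookkeeping (the dense stratum via transitivity, the lower-dimensional $\Omega$-strata via generic smoothness) agrees with the paper.
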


\begin{proof}
We first prove the theorem when $\kk$ is algebraically closed.

We begin by showing 
that $\V(d,n)$ is irreducible if and only if $d \geq D_{\mathrm{prime}}(n)$.

Consider the first endpoint $P_1 = \big(\min(d,\,n)\,,\,0\big)  \in \Omega$, cf. \eqref{EqEndpoints}.
By Remarks \ref{RemIrreducibleComponentsMaximal} and \ref{RemPrincipalComponent}, 
the closures of the irreducible components of the stratum $\mcS_{P_1}$ are irreducible components of $\V(d,n)$.
Thus, $\V(d,n)$ is irreducible if and only if $\mcS_{P_1}$ is irreducible and $P_1$ is the unique maximal element of $\Delta(d,n)$.
It follows 
from Corollary \ref{CorStructureSpq} that $\mcS_{P_1}$ is irreducible if and only if $d \geq n$. 
Since we assume $n\geq 2$ throughout, if $d \geq n$ then we have $P_1 \ne P_2$, cf. \eqref{EqEndpoints}.
We conclude,
by Proposition \ref{PropMaximum},
that if $\V(d,n)$ is irreducible then 
$d >  2n+1 - \sqrt{8n+1}$ if $d$ is even, 
and
$d >  2n - \sqrt{8n-7}$ if $d$ is odd,
equivalently,
by Lemma \ref{LemmaArithmetic} (2), we have $d \geq D_{\mathrm{prime}}(n)$.

Conversely,
assume that $d \geq D_{\mathrm{prime}}(n)$.
Since $D_{\mathrm{prime}}(n)\geq n $ for all $n \in \ZZ_{\geq 2}$,
we have $d \geq n$ and   $\mcS_{P_1}$ is irreducible.
Since $d \geq D_{\mathrm{prime}}(n)$, by Lemma \ref{LemmaArithmetic} (2) and Proposition \ref{PropMaximum} the function $\sigma(p,q)$ attains the maximum in $\Delta(d,n)$ only at $P_1$, and this maximum is equal to $dn-{n \choose 2}$.
By Theorem \ref{ThmIrreducibleDecomposition}, this implies that $P_1$ is the only maximal element of $\Delta(d,n)$, and this shows that $\V(d,n)$ is irreducible.

This implies $(3) \Rightarrow (2) \Rightarrow (1)$. 
Next, assuming that  (1)  holds, we will show that $\R(d,n)$ is a normal domain, using Serre's criterion for normality.
Since $D_{\mathrm{prime}}(n) \geq D_{\mathrm{CI}}(n)$ (this follows immediately from Lemma \ref{LemmaArithmetic}), (1) implies that $\R(d,n)$ is Cohen-Macaulay by Theorem \ref{ThmCIExtended}, so it satisfies Serre's property $S_2$.
It remains to show that $\R(d,n)$ satisfies Serre's property $R_1$, that is, $\X(d,n)$ is nonsingular in codimension 1.

By irreducibility,
$\mcS_{n,0}=\mcL_{n,0}$ is the unique dense stratum, and it contains a smooth point of    $\X(d,n)$ by Theorem \ref{ThmSmoothPointEveryStratum}.
By Corollary \ref{CorLpqHomogeneous}, it is a homogeneous space, 
so it contains no singular points of $\X(d,n)$.
Any other stratum $\mcS_{p,q}$ with $(p,q) \in \Omega$ has smaller dimension, and is generically smooth in $\X(d,n)$ by 
Theorem \ref{ThmSmoothPointEveryStratum},
so any singular points in these strata form a subset of $\X(d,n)$  of codimension at least 2.

In order to conclude the proof,
it remains to show that any stratum  $\mcS_{p,q}$ with $(p,q) \in \Delta(d,n) \setminus \Omega$ and with 
codimension 1 in $\X(d,n)$ is generically smooth.
Assume such $(p,q)$ exists.
By Proposition \ref{PropNecessaryConditionsPoset} (1),  $(p,q)$ cannot be dominated, with respect to $\preceq$, by any element of $\Delta(d,n)$ other than $(n,0)$,
otherwise the codimension would be larger than 1.
This implies that $(p,q) = (n-1,0)$ and, by Proposition \ref{PropDeformQ}, that $(n-1,1) \notin \Delta(d,n)$.
The latter condition implies that  $(n-1) +2(1) >d$; since
$d \geq n$, it follows that $d = n$.
We have $D_{\mathrm{prime}}(n)\geq n +1$ for all $n \geq 6$,
therefore, we are left with showing that   $\mcL_{n-1,0}$ contains a smooth point of $\X(d,n)$, when $n = 2, \ldots, 5$ and $d=n$.
Since we know that $\X(n,n)$ is irreducible of codimension ${n \choose 2}$ if $2 \leq n \leq 5$, 
we can use Proposition \ref{PropReductionToP0} and, by induction, it suffices to show 
this for $n=2$.
But this is trivial, since $\X(2,2)$ is a cone over a smooth quadric in $\mathbb{P}^3$.

Now, let $\kk$ be an arbitrary field. 
By Lemma \ref{LemmaAlgebraicClosure} (1), 
the direction
 $(1) \Rightarrow (3)$ follows from the algebraically closed case.
Since $(3) \Rightarrow (2)$ is trivial, it remains to show 
that $(2) \Rightarrow (1)$.
Assume that $d < D_{\mathrm{prime}(n)}$, we will show that $\I(d,n)$ is not prime.
If $d = 1$ this is clear, since $n \geq 2$;
thus, assume $d \geq 2$. 
In particular, we have $P_1 \ne P_2$, cf. \eqref{EqEndpoints}.
By  Proposition \ref{PropMaximum} and Lemma \ref{LemmaArithmetic} (2), 
$d < D_{\mathrm{prime}(n)}$ implies that
$\sigma(p,q)$ attains the maximum at $P_2$,
thus, the closures of the irreducible components of $\mcS_{P_2}$ are irreducible components of $\X(d,n) \times_{\kk} \overline{\kk}$.
By Remark \ref{RemPrincipalComponent},
the same holds for $\mcS_{P_1}$.

For any $A\in \mcS_{P_1}$, we have  $\rk(A) = \min(d,n)$.
For any $B\in \mcS_{P_2}$, we have
$
\rk(B) = 
 d -\left\lfloor\frac{d}{2} \right\rfloor = \left\lceil\frac{d}{2} \right\rceil,
$
which is strictly smaller than $d$ since $d \geq 2$.
We also have $\left\lceil\frac{d}{2} \right\rceil < n$, since 
 $D_{\mathrm{prime}}(n) \leq 2n-2$ for all $n \in \mathbb{Z}_{\geq 2}$,
and thus  $d \leq 2n-3$.
In conclusion, $\rk(B) < \rk(A) = \min(d,n) $ for all $A\in \mcS_{P_1}$,
$B \in \mcS_{P_2}$.

Consider the generic $d\times n$ matrix $(x_{i,j})$, 
and let $J\subseteq S$ be the ideal generated by its  minors of size $ \min(d,n)$.
The previous two paragraphs imply that the colon ideal 
 $\big(\I(d,n) \otimes_\kk \overline{\kk}\big) : \big(J \otimes_\kk \overline{\kk}\big)$ is strictly contained between 
  $\I(d,n) \otimes_\kk \overline{\kk}$ and $S\otimes_\kk\overline{\kk}$,
  since it cuts out some but not all of the irreducible 
  components of $\X(d,n) \times_{\kk} \overline{\kk}$.
  However, colon ideals commute with field extensions, 
thus,   
we must have strict inclusions $\I(d,n) \subsetneq \I(d,n) : J \subsetneq S$,
  and therefore $\I(d,n)$ is not a prime ideal.
\end{proof}

\subsection{Unique factorization domain}
Recall the following definition from the Introduction:
$$
D_{\mathrm{UFD}}(n)  :=
\begin{cases}
\min \left(
2 \left\lceil \frac{ 2n+1 - \sqrt{8n-23}}{2}\right\rceil
\,,\, \,
2 \left\lceil \frac{ 2n -1- \sqrt{8n-31}}{2}\right\rceil+1
\right) & \quad \text{if } n \geq 4,\\
4=D_{\mathrm{prime}}(3) & \quad\text{if } n =3,\\
3=D_{\mathrm{prime}}(2)+1 & \quad\text{if } n =2.
\end{cases}
$$
\begin{thm}\label{ThmUFDBody}
If $d \geq D_{\mathrm{UFD}}(n)$, then  $\R(d,n)$ is a unique factorization domain.
\end{thm}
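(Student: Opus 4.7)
The plan is to reduce the UFD property to a Serre-type condition on the singular locus and invoke Grothendieck's theorem (SGA 2, Exposé XI, Corollaire 3.14): a Noetherian complete intersection is factorial if and only if its localizations at primes of height at most $3$ are factorial. Since regular local rings are factorial by Auslander--Buchsbaum, it suffices to prove that $\R(d,n)$ satisfies Serre's condition $R_3$, equivalently, that the singular locus of $\X(d,n)$ has codimension at least $4$.

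As a first reduction, by Lemma \ref{LemmaAlgebraicClosure}, we may assume $\kk = \overline{\kk}$. A direct manipulation of the formulas, together with Lemma \ref{LemmaArithmetic}, gives $D_{\mathrm{UFD}}(n) \geq D_{\mathrm{prime}}(n) \geq D_{\mathrm{CI}}(n)$, so Theorems \ref{ThmCIExtended} and \ref{ThmPrimeExtended} already provide that $\R(d,n)$ is a normal Cohen--Macaulay complete intersection domain. The numerical heart of the matter is the assertion that the hypothesis $d \geq D_{\mathrm{UFD}}(n)$ translates exactly into
\[
\sigma(P_1) - \sigma(P_2) \geq 3,
\]
with $P_1, P_2 \in \Delta(d,n)$ as in \eqref{EqEndpoints}. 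This is established by rerunning the discriminant computation in the proof of Proposition \ref{PropMaximum} with the right-hand side $0$ replaced by $3$: the resulting quadratic inequalities in $d$ yield the square roots $\sqrt{8n - 23}$ and $\sqrt{8n - 31}$ that appear in the definition of $D_{\mathrm{UFD}}(n)$, and Lemma \ref{LemmaArithmetic}(1) converts the real inequalities into the parity-adjusted integer thresholds.

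Next, I analyze the singular locus, which is closed and $G$-invariant for $G$ as in \eqref{EqGroupG}, hence a union of $G$-invariant subsets of individual strata $\mcS_{p,q}$. For $(p,q) \in \Omega_1$ (so $p + q = n$), Corollary \ref{CorLpqHomogeneous} gives that each component of $\mcL_{p,q}$ is a single $\Ort(d)$-orbit, and by Theorem \ref{ThmSmoothPointEveryStratum} it contains a smooth point; thus every $\Omega_1$-stratum is entirely smooth in $\X(d,n)$. For $(p,q) \in \Omega_2 \setminus \Omega_1$, the smooth points constructed in Propositions \ref{PropSmoothPointP0QN} and \ref{PropSmoothPointP0Q2D} are built from a generic auxiliary matrix $B$, so their $G$-orbit (enlarged by the torus $\T(n)$ and the symmetric group $\Sigma_n$) is dense in each irreducible component of $\mcL_{p,q}$, forcing the singular part of the stratum to have dimension strictly less than $\sigma(p,q)$. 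Consequently it contributes codimension at least $\sigma(P_1) - \sigma(p,q) + 1 \geq \sigma(P_1) - \sigma(P_2) + 1 \geq 4$ in $\X(d,n)$.

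It remains to treat the interior strata $(p,q) \in \Delta(d,n) \setminus \Omega$. If $\sigma(p,q) \leq \sigma(P_1) - 4$, the codimension of $\mcS_{p,q}$ in $\X(d,n)$ is already sufficient and nothing further is needed. Otherwise $(p,q)$ sits just below the principal stratum (for instance, strata of the form $(p,0)$ or $(p,1)$ with $p$ close to $n$), and one shows such strata are entirely smooth via a direct Jacobian rank calculation. Concretely, evaluating the Jacobian matrix \eqref{EqJacobianMatrixEntry} at an orthonormal frame supplemented by zero columns produces a block structure that is a direct analog of the decomposition used in Proposition \ref{PropReductionToP0}; by induction on $n$ (with base cases $n = 2, 3$ handled by inspection, matching the special entries of $D_{\mathrm{UFD}}$), the rank computation reduces to the principal-stratum case, where the Jacobian has full rank ${n \choose 2}$. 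The principal obstacle is this last interior-stratum bookkeeping: one must uniformly control, across all $(d,n)$ with $d \geq D_{\mathrm{UFD}}(n)$, exactly which interior strata have codimension below $4$ and verify smoothness there, without being able to directly invoke Theorem \ref{ThmSmoothPointEveryStratum}, which covers only boundary strata.
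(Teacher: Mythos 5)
Your overall strategy coincides with the paper's: reduce to Serre's condition $R_3$ via the Samuel--Grothendieck theorem for complete intersections, pass to $\kk=\overline{\kk}$, and control the singular locus stratum by stratum, with the inequality $\sigma(P_1)-\sigma(P_2)\geq 3$ (equivalently $\sigma(P_2)\leq nd-{n\choose 2}-3$) as the numerical translation of $d\geq D_{\mathrm{UFD}}(n)$; your treatment of the boundary strata in $\Omega_1$ (homogeneous, hence entirely smooth once one smooth point exists) and in $\Omega_2$ (codimension $\geq 3$ plus a smooth point from Theorem \ref{ThmSmoothPointEveryStratum}, so the singular part has codimension $\geq 4$) is correct and matches the paper. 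The genuine gap is exactly where you flag it: the interior strata of small codimension. Your proposed fix --- evaluating the Jacobian at ``an orthonormal frame supplemented by zero columns'' and inducting as in Proposition \ref{PropReductionToP0} --- only produces points in the strata with $q=0$ (it handles $\mcL_{n-1,0}$, which the paper instead treats via Corollary \ref{CorLpqHomogeneous} and the proof of Theorem \ref{ThmPrimeExtended}). It says nothing about the strata $(n-2,1)$ and $(n-3,2)$, whose codimension is $d-n+1$ and therefore exactly $3$ in the borderline cases $(d,n)=(5,3),(6,4),(8,6)$ allowed by $D_{\mathrm{UFD}}$. These strata contain isotropic columns, are not homogeneous spaces (Corollary \ref{CorLpqHomogeneous} needs $p+q=n$ or $q=0$), and lie off $\Omega$, so Theorem \ref{ThmSmoothPointEveryStratum} does not apply; without a smooth point in them the $R_3$ bound fails to be established. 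Moreover, your claim that such near-top interior strata are ``entirely smooth'' is unjustified (and unnecessary --- generic smoothness suffices once the codimension is $\geq 3$).

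The paper closes precisely this hole by a targeted argument: using Proposition \ref{PropReductionToP0} it reduces $(n-2,1)$ in the exceptional cases to the stratum $(0,1)$ of $\X(4,2)$, a quadric cone whose only singular point is the origin, and reduces $(n-3,2)$ to the stratum $(0,2)$ of $\X(4,3)$, which (together with the whole case $(d,n)=(4,3)$, $D_{\mathrm{UFD}}(3)=4$, where several strata must be inspected) is settled by an explicit evaluation of the Jacobian matrix at normal forms obtained from the $\Ort(4)\times\T(3)$-action. Your sketch also leaves the special values $n=2,3$ of $D_{\mathrm{UFD}}$ to ``inspection'' without specifying what must be checked; for $n=3$, $d=4$ this is not a triviality but an explicit rank computation on five strata. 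So the proposal is the right skeleton, but the interior-stratum bookkeeping you identify as the obstacle is a genuine missing step, and the method you suggest for it cannot reach the strata with $q=1,2$ that actually matter.
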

\begin{proof}
Assume $d \geq D_{\mathrm{UFD}}(n)$ throughout the proof.
We observe that $D_{\mathrm{UFD}}(n) \geq 
D_{\mathrm{prime}}(n) \geq
D_{\mathrm{CI}}(n)$ for all $n \in \ZZ_{\geq 2}$
(this follows immediately from Lemma \ref{LemmaArithmetic}).
Thus,   $\R(d,n)$ is a complete intersection.
By the Samuel-Grothendieck theorem, in order to show that $\R(d,n)$ is a UFD, 
it suffices to show that $\R(d,n)$ satisfies Serre's property $R_3$,
that is, $\X(d,n)$ is nonsingular in codimension 3.
By Lemma \ref{LemmaAlgebraicClosure}, we may assume that $\kk = \overline{\kk}$.
Observe that, by Theorem \ref{ThmPrimeExtended},
 the principal component 
$\overline{\mcL_{n,0}}$
is the only irreducible component,
thus   $\dim \R(d,n) = nd - {n \choose 2}$
  and $\Omega_1 \ne \varnothing$.

If $n = 2$, then $d \geq D_{\mathrm{UFD}}(2)=3$,
and the conclusion is obvious since 
$\X(d,2)$ is a cone over a smooth quadric hypersurface in $\mathbb{P}^{2d-1}$,
so, its only singular point is the origin of $\Mat(d,2)$.
The case $n= 3, 
d = D_{\mathrm{UFD}}(3)=4$ is treated separately, at the end of the proof.
Therefore, we assume $n \geq 3$ and $(d,n) \ne (4,3)$ from now on.

In terms of the stratification of Section \ref{SectionStratification}, 
nonsingularity in codimension 3 will follow once we show that, for each  $(p,q) \in \Delta(p,q)$,  one of the following conditions holds:
\begin{itemize}
\item every irreducible component of  $\mcS_{p,q}$ is a homogeneous space and contains a smooth point of $\X(d,n)$;
\item every irreducible component of $\mcS_{p,q}$ has codimension at least 3 in $\V(d,n)$ and 
contains a smooth point of $\X(d,n)$;
\item $\mcS_{p,q}$ has codimension at least 4 in  $\V(d,n)$ .
\end{itemize}
In turn, by Definition \ref{DefStratumLpq} and  Corollaries \ref{CorLpqHomogeneous} and  \ref{CorGenericallyReduced}, it suffices to show,
for each $(p,q) \in \Delta(p,q)$,
 one of the following conditions:
\begin{enumerate}
\item $p+q=n$;
\item $q=0$ and $\mcL_{p,0}$ contains a smooth point of $\X(d,n)$;
\item $\sigma(p,q) \leq  nd - {n \choose 2} -3$ and $\mcL_{p,q}$ contains a smooth point of $\X(d,n)$;
\item $\sigma(p,q) \leq nd - {n \choose 2} -4$.
\end{enumerate}

We are going to prove the following 

\noindent
\underline{Claim}:
if $(p,q) \in \Omega_2 \setminus\{(n-2,2), (n-1,1), (n,0)\}$,
the codimension of $\mcL_{p,q}$  in $\V(d,n)$  is  at least 3.

First,
we show that the Claim implies the desired conditions.
If $(p,q)\in\{(n-2,2), (n-1,1), (n,0)\}$, then condition (1) holds. 
If $(p,q)\in  \Omega \setminus\{(n-2,2), (n-1,1), (n,0)\}$, then 
$\mcL_{p,q}$ has codimension   at least 3 in $\V(d,n)$:
this follows by the Claim if $(p,q) \in \Omega_2$, and by Proposition \ref{PropRestrictionSegment1} if $(p,q) \in \Omega_1$.
By Corollary  \ref{CorGenericallyReduced}, this verifies condition (3) for all $(p,q)\in  \Omega \setminus\{(n-2,2), (n-1,1), (n,0)\}$.
Moreover, by Propositions \ref{PropNecessaryConditionsPoset} (1), \ref{PropDeformP}, \ref{PropDeformQ}, 
this also implies condition (4) for all $(p,q)\in \Delta(d,n) \setminus \Omega$ such that $q \geq 3$.
The locus $\mcL_{n-1,0}$ satisfies condition (2)
 by Corollary \ref{CorLpqHomogeneous} and the proof of Theorem \ref{ThmPrimeExtended}.

It remains to consider those $(p,q)\in \Delta(d,n) \setminus \Omega$ such that $q \leq 2$ and $p \leq n-2$.
All such $(p,q)$ are dominated by $(n-2,2)$ or $(n-3,1)$.
We compute the codimensions
\begin{align*}
\codim\, \mcL_{n-2,1} &= \sigma(n,0)-\sigma(n-2,1) =  d-n+1\\
\codim\, \mcL_{n-3,2} &= \sigma(n,0)-\sigma(n-3,2) =  d-n+1
\end{align*}
We have
$D_{\mathrm{UFD}}(n) \geq n+3$ if $n \ne 3,4,6,$
while $D_{\mathrm{UFD}}(n)=n+2$ for $n = 4, 6$, $D_{\mathrm{UFD}}(3)=4$. 
Thus, condition (4) follows for all the remaining pairs $(p,q)$,
except for $(p,q) = (n-2,1),(n-3,2)$ 
when $(d,n) = (5,3), (6,4),  (8,6)$, 
for which  the codimension is 3.
In the  exceptional cases, 
we verify condition (3) instead.
Using Proposition \ref{PropReductionToP0}, 
as in the proof of Theorem \ref{ThmPrimeExtended},
we may reduce the cases $(p,q) = (n-2,1)$
 to $(d,n) = (4,2)$ and $(p,q) = (0,1)$;
the claim follows, since the only singular point of $\X(4,2)$ is $0 \in \Mat(4,2)$.
Similarly, the cases  $(p,q) = (n-3,2)$
reduce to $(d,n) = (5,3)$ and $(p,q) = (0,2)$;
the claim follows from the case $(d,n) = (4,3)$ and $(p,q) = (0,2)$, treated at the end of the proof.

This concludes the proof of the fact that the Claim above implies the statement of the theorem.
Now, we are going to prove the Claim.
The statement is vacuous if 
 $\Omega_2 = \varnothing$,
   so we assume  $\Omega_2 \ne \varnothing$.
As observed earlier, we also have $\Omega_1 \ne \varnothing$, thus, by 
\eqref{EqTable3CasesSegments},
we have $n \leq d \leq  2n-2$.
In fact, as observed above,
we have 
$n+2 \leq D_{\mathrm{UFD}}(n) \leq d \leq 2n-2,$
and this implies, in particular, that $n \geq 4$.
By Proposition \ref{PropRestrictionSegment2}, it suffices to show that 
  $\sigma(P_2) \leq  nd - {n \choose 2}-3$.
  Recall
$P_2 =
 \left(
 d -2\left\lfloor\frac{d}{2} \right\rfloor
 \,,\, 
 \left\lfloor\frac{d}{2} \right\rfloor
 \right).$
 
Assume $d$ is even. 
The assumption 
 $d \geq D_{\mathrm{UFD}}(n)$ is equivalent to $d \geq 2n+1 - \sqrt{8n-23}$, 
 by Lemma \ref{LemmaArithmetic}. 
 Rewriting this as $ 2n+1-d \leq \sqrt{8n-23} $ and squaring,
 we obtain an inequality that is equivalent to 
$$
\sigma(P_2) =\sigma \left(0\,, \frac{d}{2}\right)= \frac{1}{8}d^2+\frac{1}{2}dn-\frac{1}{4}d  \leq nd - {n \choose 2}-3,
$$
as desired.
Similarly, if $d$ is odd, then
 $d \geq D_{\mathrm{UFD}}(n)$ is equivalent to $d \geq 2n - \sqrt{8n-31}$, 
 by Lemma \ref{LemmaArithmetic}. 
 Rewriting this as $ 2n-d \leq \sqrt{8n-31} $ and squaring,
 we obtain an inequality that is equivalent to 
 $$
\sigma(P_2) =\sigma \left(1\,, \frac{d-1}{2}\right)= \frac{1}{8}d^2+\frac{1}{2}dn-\frac{1}{2}n+\frac{7}{8}
\leq nd - {n \choose 2}-3,
$$
and this concludes the proof of the theorem, except for the case $(d,n) = (4,3)$.
Note that, 
in applying Lemma \ref{LemmaArithmetic}, we used the fact that 
$$
\Big|\big(2n+1 - \sqrt{8n-23}\big)-\big(2n - \sqrt{8n-31}\big) \Big| \leq 1 $$
 for all $n \geq 4$, which can be easily verified.
 
 Finally, we turn to the proof of the $R_3$ property for the case $(d,n) = (4,3)$.
  We only need to inspect the strata corresponding to  $(p,q) = (2,0),  (1,1), (0,2), (1,0), (0,1)$.
One checks immediately that 
$(1,0)$ and $(0,1)$ satisfy condition (4).
We are going to show that $\mcL_{2,0}, \mcL_{1,1}$ and $\mcL_{0,2}$ contain no singular points of $\X(4,3)$. 
Fix a square root $\nu \in \kk$ of $-1$.
By acting with the group $\Ort(4) \times \T(3)$, any point of $\mcL_{2,0}, \mcL_{1,1}, \mcL_{0,2}$ can be transformed, respectively, into 
$$
A_{2,0}=\matrix{1 & 0 & 0 \\ 0 & 1 & 0 \\ 0 & 0 & 0 \\ 0 & 0 & 0},
\quad
A_{1,1 }(\delta)=\matrix{1 & 0 & 0 \\ 0 & 1 & \delta \\ 0 & \nu & \nu \delta \\ 0 & 0 & 0},
\quad
A_{0,2}(\varepsilon,\eta)=\matrix{1 & 0 & \varepsilon \\ \nu & 0 & \nu\varepsilon \\ 0 & 1 & \eta \\ 0 & \nu & \nu \eta},
$$
for some $\delta, \varepsilon,  \eta \in\kk$.
Using the Jacobian matrix
$$
\Theta(4,3)
=
\begin{pNiceMatrix}[first-col,first-row]
&\shortstack{ $x_{1,1}$\\ \,} &\shortstack{ $x_{1,2}$\\ \,} &\shortstack{ $x_{1,3}$\\ \,} &\shortstack{ $x_{2,1}$\\ \,} &\shortstack{ $x_{2,2}$\\ \,} &\shortstack{ $x_{2,3}$\\ \,} &\shortstack{ $x_{3,1}$\\ \,} &\shortstack{ $x_{3,2}$\\ \,} &\shortstack{ $x_{3,3}$\\ \,} &\shortstack{ $x_{4,1}$\\ \,} &\shortstack{ $x_{4,2}$\\ \,} &\shortstack{ $x_{4,3}$\\ \,}
\\
(1,2)\,\, & x_{1,2} & x_{1,1} & 0  
& x_{2,2} & x_{2,1} & 0  
& x_{3,2} & x_{3,1} & 0  
& x_{4,2} & x_{4,1} & 0  
\\
(1,3) \,\,& x_{1,3} & 0 &  x_{1,1} &
x_{2,3} & 0 &  x_{2,1} &
x_{3,3} & 0 &  x_{3,1} &
x_{4,3} & 0 &  x_{4,1} 
\\
(2,3)\,\, & 0 & x_{1,3} & x_{1,2} &
0 & x_{2,3} & x_{2,2} & 
0 & x_{3,3} & x_{3,2} & 
0 & x_{4,3} & x_{4,2}  
\end{pNiceMatrix}
$$
it follows that these points are smooth on $\X(4,3)$, for all $\delta,\varepsilon, \eta \in \kk$.
\end{proof}

 \subsection{Applications to Lov\'asz-Saks-Schrijver ideals}
Recall the definition of the $d$-th LSS ideal of a simple graph $\mcG$  on $n$ vertices.
$$
\LSS(d,\mathcal{G}) = 
\left( \sum_{i=1}^d x_{i,j}x_{i,k} \, : \, (j,k) \text{ is an edge of } \mathcal{G}\right)\subseteq S.
$$ 

\begin{thm}
Let $\mcG$ be a simple graph on $n$ vertices.
\begin{enumerate}
\item  If $d \geq D_{\mathrm{CI}}(n)$, then 
$\LSS(d,\mathcal{G})$ is a radical complete intersection;
\item if $d \geq D_{\mathrm{prime}}(n)$, then   $S/\LSS(d,\mathcal{G})$ is a normal domain;
\item if $d \geq D_{\mathrm{UFD}}(n)$, then $S/\LSS(d,\mathcal{G})$ is a UFD.
\end{enumerate}
\end{thm}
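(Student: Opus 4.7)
The plan is to realize $\R(d,n)$ as the quotient of $S/\LSS(d,\mcG)$ by a regular sequence of homogeneous elements of positive degree, and then to lift the relevant properties of $\R(d,n)$ back up to $S/\LSS(d,\mcG)$ by iterated application of Lemma \ref{LemmaDeformation}. Since $\mcG$ is a subgraph of the complete graph $K_n$, we have $\LSS(d,\mcG) \subseteq \I(d,n)$, and explicitly $\I(d,n) = \LSS(d,\mcG) + (f_e : e \in \bar E)$, where $\bar E = E(K_n) \setminus E(\mcG)$ and $f_e = \sum_i x_{i,j}x_{i,k}$ for $e = (j,k)$. A direct comparison of the definitions shows that $D_{\mathrm{prime}}(n), D_{\mathrm{UFD}}(n) \geq D_{\mathrm{CI}}(n)$, so in each of (1), (2), (3) we have $d \geq D_{\mathrm{CI}}(n)$, and Theorem \ref{ThmCI} guarantees that $\I(d,n)$ is a complete intersection. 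Reordering the $\binom{n}{2}$ quadratic generators so that those of $\LSS(d,\mcG)$ come first, I would conclude that $\LSS(d,\mcG)$ is itself a complete intersection (so $S/\LSS(d,\mcG)$ is Cohen--Macaulay), and that the remaining quadrics $\{\bar f_e : e \in \bar E\}$ form a regular sequence in $S/\LSS(d,\mcG)$ whose quotient is exactly $\R(d,n)$.

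For part (1), the input will be Corollary \ref{CorGenericallyReduced}, which gives that $\R(d,n)$ satisfies Serre's $R_0$ condition. Iterating Lemma \ref{LemmaDeformation}(2) with $k=0$ back along the regular sequence $\{\bar f_e\}$ propagates $R_0$ to $S/\LSS(d,\mcG)$, and combined with the $S_1$ condition coming from Cohen--Macaulayness, Serre's criterion for reducedness yields that $\LSS(d,\mcG)$ is radical. For (2), the input will be Theorem \ref{ThmPrime}, which gives that $\R(d,n)$ is a normal domain; iterating Lemma \ref{LemmaDeformation}(1) lifts the domain property to $S/\LSS(d,\mcG)$, iterating Lemma \ref{LemmaDeformation}(2) with $k=1$ lifts $R_1$, and combined with $S_2$ (again from Cohen--Macaulayness), the Serre normality criterion yields that $S/\LSS(d,\mcG)$ is a normal domain.

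For (3), the strategy is analogous but requires unpacking Theorem \ref{ThmUFD} slightly: the proof of Theorem \ref{ThmUFDBody} actually proceeds by establishing Serre's $R_3$ condition for $\R(d,n)$ and then invoking the Samuel--Grothendieck theorem (a local complete intersection satisfying $R_3$ is a UFD). Taking this intermediate $R_3$ as input, I would iterate Lemma \ref{LemmaDeformation}(2) with $k=3$ to propagate $R_3$ to $S/\LSS(d,\mcG)$, and combined with its complete intersection structure, Samuel--Grothendieck again yields that $S/\LSS(d,\mcG)$ is a UFD. The main subtlety in the whole argument is precisely this step: since UFD alone (even for complete intersections) does not in general imply $R_3$, one cannot use the conclusion of Theorem \ref{ThmUFD} as a black box and must extract the stronger intermediate $R_3$ statement from its proof. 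Beyond this, the plan is essentially a bookkeeping exercise in iterating Lemma \ref{LemmaDeformation} through a regular sequence and invoking the appropriate Serre-type criterion at the end.
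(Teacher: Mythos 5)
Your proposal is correct and follows essentially the same route as the paper: both treat $\LSS(d,\mathcal{G})$ as a complete intersection (a subset of the regular sequence generating $\I(d,n)$, valid since $D_{\mathrm{UFD}}(n)\geq D_{\mathrm{prime}}(n)\geq D_{\mathrm{CI}}(n)$), view $\R(d,n)$ as the quotient of $S/\LSS(d,\mathcal{G})$ by the remaining quadrics, and lift the domain property and Serre's conditions $R_0$, $R_1$, $R_3$ back via Lemma \ref{LemmaDeformation}, concluding with Serre's criteria and the Samuel--Grothendieck theorem. The subtlety you flag — that for (3) one must use the $R_3$ statement established inside the proof of Theorem \ref{ThmUFDBody} rather than the UFD conclusion as a black box — is exactly how the paper proceeds.
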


\begin{proof}
Since 
$D_{\mathrm{UFD}}(n) \geq 
D_{\mathrm{prime}}(n) \geq
D_{\mathrm{CI}}(n)$ for all $n \in \ZZ_{\geq 2}$,
we may assume that $\I(d,n)$ is a complete intersection.
Then, clearly $\LSS(d, \mathcal{G})$ is also a complete intersection, since it is generated by a subset of the generators of $\I(d,n)$.
Moreover, $S/\LSS(d,\mathcal{G})$ is a deformation of $\R(d,n)$,
in the sense that the latter is obtained from the former by going modulo a regular sequence.
If $d \geq D_{\mathrm{prime}}(n)$, then   
$\R(d,n)$ is a domain, so, 
by Lemma \ref{LemmaDeformation} (1),
$S/\LSS(d,\mathcal{G})$ is also a domain.
By Lemma \ref{LemmaDeformation} (2),
in statements (1), (2), (3),
$S/\LSS(d,\mathcal{G})$ inherits from $\R(d,n)$ conditions $R_0, R_1, R_3$, respectively,
and the conlusions follow from Serre's criteria for reducedness, for normality, and from the Samuel-Grothendieck theorem, repsectively,
as in Theorems \ref{ThmCIExtended}, \ref{ThmPrimeExtended}, \ref{ThmUFDBody}.
\end{proof}

\section{Open problems}\label{SectionFutureDirections}

 In this final section,
we discuss  potential future directions, and collect some open problems suggested
by our work.

\subsection{Rational singularities}
By Theorem \ref{ThmPrimeExtended},
the variety $\V(d,n)$ is normal and Cohen-Macaulay as soon as it is irreducible.
A stronger condition, which implies both normal and Cohen-Macaulay, is  having  rational singularities ($F$-rational if $\mathrm{char}(\kk) >0$).
This condition is relevant for applications to moduli spaces and representation theory \cite{AA,K}. 
In \cite[Theorem A]{TV}, a sufficient condition for LSS rings to have ($F$-)rational singularities is given, in terms of the positive matching decomposition and of the degeneracy number of a graph;
it implies that $\V(d,n)$ has ($F$-)rational singularities if $d \geq 3n-4$.
It is natural to ask if the following sharper bound holds.

\begin{que}
Does $\V(d,n)$ have ($F$-)rational singularities if $d \geq D_{\mathrm{prime}}(n)$?
\end{que}

\subsection{Syzygies and representation theory}
Theorem \ref{ThmCI} says that $\I(d,n)$ is resolved by the Koszul complex if and only of $d \geq D_{\mathrm{CI}}(n)$.
In general, the free resolution or the graded Betti numbers of 
$\I(d,n)$ are unknown.

\begin{que}
What is the Betti table of $\I(d,n)$?
\end{que}

The  case $d=1$ is rather trivial, cf. Remark \ref{RemI1N}.
The case $d=2$  was solved   recently in \cite{GHSW};
recall that the ideal $\I(2,n)$ is the same, up to a change of coordinates,
as the ideal of $2\times 2$ permanents of a generic $2 \times n$ matrix,
cf. \cite[Section 3]{HMMW}.

In fact, since $\I(d,n)$ is invariant under the group $G$ of \eqref{EqGroupG},
it would be interesting to determine the equivariant free resolution of $\I(d,n)$ with respect to $G$ (or some of its subgroups).
In other words, 
to determine the decomposition of the 
syzygy modules 
$\mathrm{Tor}^S_i(\I(d,n),\kk)$
into irreducible representations.
For the first syzygy module of $\I(2,n)$ and the group $\Sigma_2 \times \Sigma_n$, this was done in \cite[Section 4.2]{GHSW}.

\subsection{Distinguished components of the variety of orthogonal frames}\label{SubsectionComponents}
It is natural to  investigate 
certain (unions of) irreducible components of $\V(d,n)$,
and, in particular, to study for them  some of the  problems discussed so far for $\V(d,n)$.

To begin,
we discuss the purely isotropic locus.
Suppose, for simplicity, that $d$ is even and $d \leq 2n$.
The set
$$\V_{\iso}(d,n)= \overline{\mcS_{0,q}}\subseteq \V(d,n)
\qquad
\text{with}
\qquad
q = \frac{d}{2}
$$
is the variety of  isotropic  $n$-frames of $\kk^d$, that is,
$\V_{\iso}(d,n)=\big\{ A \in \Mat(d,n) \, : \, A^\intercal A=0\big\}.$
By Corollary \ref{CorStructureSpq}, 
it is
the union of two isomorphic irreducible components. 
It follows by the proof of Theorem \ref{ThmPrimeExtended} that if $d < D_{\mathrm{prime}}$ then the components of  $\V_{\iso}(d,n)$ are also  components of $\V(d,n)$.

This variety is studied in \cite[p. 219]{Weyman},
by means of the Kempf-Lascoux-Weyman geometric technique.
In fact, $\V_{\iso}(d,n)$
 can be regarded as an analogue of determinantal varieties for the orthogonal group $\Ort(d)$.
It is shown that the defining ideal of $\V_\iso(d,n)$ 
is generated by the  $(q+1)$-minors of the generic matrix $(x_{i,j})$ and by the  ${n+1 \choose 2}$ quadrics corresponding to the matrix equation $A^\intercal A = 0$,
and that its two irreducible components are normal and Cohen-Macaulay varieties with rational singularities.
See also \cite[Section 7]{HJPS}.

Now,
we discuss the purely anisotropic locus.
Suppose, for simplicity, that $d \geq n$.
The set 
$$
\V_{\ani}(d,n) = 
\overline{\mcS_{n,0}}
\subseteq \V(d,n)
$$
is the principal irreducible component of $\V(d,n)$, 
which generically parametrizes linearly independent orthogonal frames,
cf. Definition \ref{DefPrincipalComponent}.
In particular, when $d=n$,  $\V_{\ani}(d,d)$ is the closure of the set of orthogonal bases of $\kk^d$, 
as opposed to the orthogonal group $\Ort(d)$, which is the set of orthonormal bases.

In contrast with $\V_\iso(d,n)$,
very little is known about the variety $\V_\ani(d,n)$.
If $d \geq D_\mathrm{prime}(n)$,
 then $\V_\ani(d,n) = \V(d,n)$ by Theorem \ref{ThmPrime}, so it is a normal variety, 
and if $d \geq 3n-4$ 
then it has rational singularities by \cite[Theorem A]{TV}.
But nothing is known about $\V_\ani(d,n)$ when it differs from the variety of orthogonal frames.

\begin{que}
What is the defining ideal of $\V_{\ani}(d,n)$?
\end{que}

This question is  open already in the first case when 
$\V_\ani(d,n) \ne \V(d,n)$, which is $(d,n) = (6,6)$:
we do not know a single polynomial in the ideal of $\V_\ani(6,6)$ that is  not in $\I(6,6)$.

It is natural to ask whether the singularities of $\V_\ani(d,n)$ are good also when $d < D_{\mathrm{prime}}(n)$.

\begin{que}
Is the variety $\V_{\ani}(d,n)$ normal, Cohen-Macaulay, or does it have ($F$-)rational singularities?
\end{que}

\subsection{Residual intersection and linkage}
In addressing the questions of the previous subsection, 
it may be interesting to study the variety of orthogonal frames form the points of view of the theory of linkage and residual intersections.
If $J$ denotes the ideal of  $\V(d,n)\setminus \V_\ani(d,n)$,
 then $\V_\ani(d,n)$ is defined, 
at least set-theoretically, by the colon ideal $\I(d,n):J$.
Since $\V_\ani(d,n)$ has codimension equal to the number of generators of $\I(d,n)$, 
 cf. \eqref{EqCodimPrincipal},
 this means that $\I(d,n):J$ is a residual intersection of $J$, in the sense of
\cite{ArtinNagata,HunekeUlrich2}.

When
$d = D_{\mathrm{CI}}(n)<D_{\mathrm{prime}}(n)$,
then 
 $\V(d,n)$ is a complete intersection but it is reducible.
 Assume for simplicity that
 $d$ is even, then, by the proof of Theorem \ref{ThmCIExtended}, we have $\V(d,n) = \V_\ani(d,n) \cup \V_\iso(d,n)$.
In this case, 
residual intersection simply corresponds to linkage
\cite{HunekeUlrich,PeskineSzpiro},
and the defining ideal of $\V_\ani(d,n)$ is the link of the defining ideal of $\V_\iso(d,n)$ via the regular sequence $\I(d,n)$.
This happens, for example, for $(d,n) = (6,6)$.
In general, $\V(d,n)$ is not equidimensional, and $\V_\ani(d,n)$ is only a residual intersection.
For example, when $(d,n) = (10,9)$, 
then, by Theorem \ref{ThmIrreducibleDecomposition},
we have 
$\V(d,n) = \V_\ani(d,n) \cup \V_\iso(d,n)$,
with $\dim \V_\ani(d,n) = 54, \dim  \V_\iso(d,n)= 55$,
thus, $\V_\ani(d,n)$ is a residual intersection of $\V_\iso(d,n)$,
via the ideal $\I(d,n)$.
In more complicated examples, there will be many more irreducible components, with intermediate dimensions,
as dictated by Theorem \ref{ThmIrreducibleDecomposition}.

\subsection{Koszul algebras}

The following question was suggested by Jason McCullough.

\begin{que}
When is $\R(d,n)$ a Koszul algebra?
\end{que}

Recall that a Noetherian standard graded $\kk$-algebra $A$ is called \emph{Koszul} if the residue field $\kk$ has a linear free resolution as $A$-module, equivalently, if $\big[ \mathrm{Tor}^A_i(\kk,\kk)\big]_j  = 0$ whenever $i \ne j$.
This is a very natural condition for quadratic algebras.
Indeed, all Koszul algebras are defined by quadratic relations, and, conversely, several  algebras defined by well-structured quadratic relations are Koszul;
we refer to \cite{CDR} for more information on this subject. 

When $d= 1$, the algebra $\R(1,n)$ is defined by  quadratic monomials, 
and it follows by \cite{Froberg} that it is Koszul.
When $d \geq D_{\mathrm{CI}}(n)$, 
the algebra $\R(d,n)$ is defined a regular sequence of quadrics, 
and it follows by \cite{Tate} that it is Koszul.
However, 
in general, $\R(d,n)$ may fail to be Koszul.
For example,  $\R(2,4)$  is not Koszul when $\mathrm{char}(\kk) = 0$: 
a calculation  with \cite{M2}
shows that there are 22 syzygies among  the $6$ quadratic generators of $\I(2,4)$,
and they are all quadratic;
by 
\cite[Theorem 3.1]{ACI}, 
this implies that 
 $\R(2,4)$ cannot be Koszul.

\subsection{Embedded primes of LSS ideals}
In order to complete the proof of Conjecture \ref{ConjectureRadical} that $\I(d,n)$ is a radical ideal, 
in view of Theorem \ref{ThmGenericallyReduced},
it would suffice to show that all associated primes of $\I(d,n)$ are minimal primes.
In fact, 
more generally, no example of LSS ideal with embedded components is known.

\begin{que}
Let $\mathcal{G}$ be a simple graph. Are all associated primes of $\LSS(d,\mathcal{G})$ minimal?
\end{que}

\newgeometry{inner=2.5cm,outer=2.5cm, bottom=2.5cm}

\section*{Acknowledgments}
The authors would like to thank Aldo Conca, Alessio D'Alì, Alessandro De Stefani, Jason McCullough, Bernd Sturmfels, and Jerzy Weyman for helpful conversations.
The software Macaulay2 \cite{M2} was used for some computations related to this paper.  L.~C. would like to acknowledge the Max-Planck Institute for Mathematics in the Sciences, where she was affiliated during the development of this work.

\section*{Funding}
A.~S.  was  supported by the grant
PRIN 2022K48YYP \emph{Unirationality, Hilbert schemes, and singularities},
and is a member of  INdAM – GNSAGA.

\printbibliography

\end{document}